\documentclass[12pt]{article}		
\usepackage{e-jc}			


\usepackage{stmaryrd}		
\usepackage{amsfonts}
\usepackage{amssymb}
\usepackage{amsmath}
\usepackage{amsthm}
\usepackage[usenames]{color}
\usepackage{amscd}
\usepackage{verbatim}

\usepackage{subfigure}	
\usepackage{cancel}	
\usepackage[small,bf]{caption}
\usepackage{graphicx}
\usepackage{enumerate}
\usepackage{float}		
\usepackage[colorlinks=true, citecolor=blue, linktocpage]{hyperref}

\newtheorem{theorem}{Theorem}[section] 
\newtheorem{lemma}[theorem]{Lemma}
\newtheorem{corollary}[theorem]{Corollary}
\newtheorem{conjecture}[theorem]{Conjecture}

\newtheorem{proposition}[theorem]{Proposition}

\newtheorem*{nonumthm}{Theorem}

\theoremstyle{definition}
\newtheorem{example}[theorem]{Example}

\theoremstyle{definition}
\newtheorem{definition}[theorem]{Definition}

\theoremstyle{plain}

\theoremstyle{definition}
\newtheorem{remark}[theorem]{Remark}

\theoremstyle{definition}
\newtheorem*{property}{Subfilling Property}

\DeclareMathOperator{\GL}{GL}		
\DeclareMathOperator{\DP}{DP}		
\DeclareMathOperator{\Mat}{Mat}		
\DeclareMathOperator{\JT}{(\textit{h},\mu)}

\newcommand{\LTJ}{\left\langle LT(J_h)\right\rangle}
\newcommand{\X}{\textbf{x}^\alpha}

\newcommand{\A}{\mathcal{A}(\mu)}
\newcommand{\B}{\mathcal{B}(\mu)}
\newcommand{\D}{\DP^T}

\newcommand{\Ah}{\mathcal{A}_h(\mu)}
\newcommand{\Bh}{\mathcal{B}_h(\mu)}

\newcommand{\FiveTab}{\setlength{\unitlength}{.15in}\begin{picture}(5,1)(0,0)
\linethickness{.25pt}
\multiput(0,0)(0,1){2}{\line(1,0){5}}
\put(0,0){\line(0,1){1}}
\put(5,0){\line(0,1){1}}
\end{picture}}

\newcommand{\TwoOneTab}[4][1]{\scalebox{#1}{\begin{tabular}{|c|c|} \hline
  \begin{LARGE}#2\end{LARGE} & \begin{LARGE}#3\end{LARGE} \\ \hline
  \begin{LARGE}#4\end{LARGE} \\ \cline{1-1}
\end{tabular}}}

\numberwithin{figure}{section}
\numberwithin{theorem}{subsection}

\input xy	
\xyoption{all}


\title{\vspace*{-50pt}\textbf{A Hessenberg generalization of the Garsia-Procesi basis for the cohomology ring of Springer varieties}}
\author{Aba Mbirika\\
\small Department of Mathematics\\[-0.8ex]
\small Bowdoin College\\[-0.8ex]
\small Brunswick, Maine, USA\\[-0.8ex]
\small \texttt{ambirika@bowdoin.edu}\\[-0.8ex]
\small www.bowdoin.edu/$\sim$ambirika}

\makeatletter
\newcommand\ackname{Acknowledgments}
\if@titlepage
  \newenvironment{acknowledgments}{%
      \titlepage
      \null\vfil
      \@beginparpenalty\@lowpenalty
      \begin{center}%
        \bfseries \ackname
        \@endparpenalty\@M
      \end{center}}%
     {\par\vfil\null\endtitlepage}
\else
  \newenvironment{acknowledgments}{%
      \if@twocolumn
        \section*{\abstractname}%
      \else
        \small
        \begin{center}%
          {\bfseries \ackname\vspace{-.5em}\vspace{\z@}}%
        \end{center}%
        \quotation
      \fi}
      {\if@twocolumn\else\endquotation\fi}
\fi
\makeatother

\date{\dateline{Jan 7, 2010}{Oct 29, 2010}{Nov 11, 2010}\\
\small Mathematics Subject Classifications: 05E15, 014M15}

\begin{document}

\maketitle
\vspace{-.25in}
\begin{abstract}
The Springer variety is the set of flags stabilized by a nilpotent operator.  In 1976, T.A. Springer observed that this variety's cohomology ring carries a symmetric group action, and he offered a deep geometric construction of this action.  Sixteen years later, Garsia and Procesi made Springer's work more transparent and accessible by presenting the cohomology ring as a graded quotient of a polynomial ring.  They combinatorially describe an explicit basis for this quotient.  The goal of this paper is to generalize their work.  Our main result deepens their analysis of Springer varieties and extends it to a family of varieties called Hessenberg varieties, a two-parameter generalization of Springer varieties.  Little is known about their cohomology.  For the class of regular nilpotent Hessenberg varieties, we conjecture a quotient presentation for the cohomology ring and exhibit an explicit basis.  Tantalizing new evidence supports our conjecture for a subclass of regular nilpotent varieties called Peterson varieties.
\end{abstract}

\tableofcontents

\section{Introduction}\label{sec:Intro}
The Springer variety $\mathfrak{S}_X$ is defined to be the set of flags stabilized by a nilpotent operator $X$.  Each nilpotent operator corresponds to a partition $\mu$ of $n$ via decomposition of $X$ into Jordan canonical blocks.  In 1976, Springer~\cite{Spr76} observed that the cohomology ring of $\mathfrak{S}_X$ carries a symmetric group action, and he gave a deep geometric construction of this action.  In the years that followed, De Concini and Procesi~\cite{dCP} made this action more accessible by presenting the cohomology ring as a graded quotient of a polynomial ring.  Garsia and Procesi~\cite{GP} later gave an explicit basis of monomials $\B$ for this quotient.  Moreover, they proved this quotient is indeed isomorphic to $H^*(\mathfrak{S}_X)$.

We explore the two-parameter generalization of the Springer variety called Hessenberg varieties $\mathfrak{H}(X,h)$, which were introduced by De Mari, Procesi, and Shayman~\cite{dMPS}.  These varieties are parametrized by a nilpotent operator $X$ and a nondecreasing map $h$ called a Hessenberg function.  The cohomology of Springer's variety is well-known~\cite{Spr76,Spr,dCP,GP,Tani}, but little is known about the cohomology of the family of Hessenberg varieties.  However in 2005, Tymoczko~\cite{Tym} offered a first glimpse by giving a paving by affines of these Hessenberg varieties.  This allowed her to give a combinatorial algorithm to compute its Betti numbers.  Using certain Young diagram fillings, which we call \textit{$\JT$-fillings} in this paper, she calculates the number of \textit{dimension pairs} for each $\JT$-filling.
\begin{nonumthm}[Tymoczko]
The dimension of $H^{2k}(\mathfrak{H}(X,h))$ is the number of $\JT$-fillings $T$ such that $T$ has $k$ dimension pairs.
\end{nonumthm}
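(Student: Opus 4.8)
The plan is to derive the theorem from a \emph{paving of $\mathfrak{H}(X,h)$ by affine cells} indexed by the $\JT$-fillings, the cell attached to a filling $T$ having complex dimension equal to the number of dimension pairs of $T$. Granting such a paving, the conclusion is automatic: when a complex projective variety is paved by affines, the fundamental classes of the cell closures form a $\mathbb{Z}$-basis of its (Borel--Moore) homology, concentrated entirely in even degrees, so its odd cohomology vanishes and $\dim H^{2k}$ equals the number of cells of complex dimension $k$. Since $\mathfrak{H}(X,h)$ is a closed subvariety of the variety of complete flags in $\mathbb{C}^n$, hence projective, it therefore suffices to produce the paving and to match cell dimensions with numbers of dimension pairs.

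First I would fix $X$ in a standard Jordan form for $\mu$ on $\mathbb{C}^n$ with standard basis $e_1,\dots,e_n$ (permissible up to isomorphism, all nilpotents of type $\mu$ being conjugate), and intersect $\mathfrak{H}(X,h)$ with the Schubert cells $C_w$, $w\in S_n$, of the Bruhat decomposition of the flag variety. Each $C_w$ is an affine space, parametrized by a family of ``free'' matrix entries once a flag in $C_w$ is written in the row-echelon form determined by the pivot pattern of $w$. Substituting the generic echelon matrix into the Hessenberg conditions $XF_i\subseteq F_{h(i)}$ for $1\le i\le n$ turns them into polynomial equations in the free entries, together with some scalar identities forced by the fixed pivots. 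The technical heart is to show that this system is, after suitably ordering the free entries, ``triangular'': either one of the scalar identities fails --- whence $\mathfrak{H}(X,h)\cap C_w=\emptyset$ --- or the free entries partition into those forced to $0$, those that remain genuinely free, and those whose value is then a polynomial in the genuinely free ones, so that $\mathfrak{H}(X,h)\cap C_w\cong\mathbb{A}^{d_w}$ with $d_w$ the number of genuinely free entries. I expect proving this triangularity, uniformly in $w$, to be the main obstacle; the mechanism is that $X$ nilpotent strictly lowers the Jordan grading on basis vectors while $h$ is nondecreasing, and these two monotonicities together preclude any circular dependence among the equations.

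With the paving in hand comes the combinatorial dictionary. To each $w\in S_n$ I would attach a candidate $\JT$-filling $T_w$ recording how the pivot pattern of $C_w$ meets the Jordan-block structure of $X$, then check that $\mathfrak{H}(X,h)\cap C_w\neq\emptyset$ --- equivalently, that the scalar identities above all hold --- exactly when $T_w$ is a legitimate $\JT$-filling, so that $w\mapsto T_w$ restricts to a bijection from the nonempty cells onto the set of all $\JT$-fillings. It then remains to verify $d_w=\#\{\text{dimension pairs of }T_w\}$: a bookkeeping match pairing each free entry of $C_w$ that survives the elimination with an ordered pair of boxes of $T_w$ whose entries satisfy the defining inequality of a dimension pair (the bound imposed by $h$), and conversely, so the two counts agree pair-by-pair.

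Finally, the standard filtration of the flag variety by the closed sets $\bigcup_{\ell(w)\le j}\overline{C_w}$ restricts to a filtration of $\mathfrak{H}(X,h)$ by closed subvarieties whose $j$-th successive difference is the disjoint union $\bigsqcup_{\ell(w)=j}(\mathfrak{H}(X,h)\cap C_w)$ of affine spaces (discarding the empty ones); hence $\mathfrak{H}(X,h)$ is paved by affines. Counting cells of each complex dimension over the whole paving and invoking the homological principle of the first paragraph gives $\dim H^{2k}(\mathfrak{H}(X,h))=\#\{w : d_w=k\}=\#\{\JT\text{-fillings }T : T\text{ has }k\text{ dimension pairs}\}$, as desired. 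As a consistency check, taking $h$ the identity recovers the well-known affine paving of the Springer variety $\mathfrak{S}_X$ and the count underlying the Garsia--Procesi basis $\B$.
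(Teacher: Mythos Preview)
Your proposal is correct and follows exactly the approach the paper attributes to Tymoczko: the paper does not prove this theorem itself but only records, in the remark following the statement, that Tymoczko establishes it by an explicit paving by affines of $\mathfrak{H}(X,h)$, which is precisely the strategy you outline. Your sketch fleshes out that remark---intersecting with Schubert cells, identifying which $w$ give nonempty intersections via the $\JT$-filling condition, and matching surviving free coordinates with dimension pairs---and this is indeed the content of \cite{Tym}.
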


A main result in this paper connects the dimension-counting objects, namely the $\JT$-fillings, for the graded parts of $H^*(\mathfrak{H}(X,h))$ to a set of monomials $\Ah$.  We describe a map $\Phi$ from these $\JT$-fillings onto the set $\Ah$ in Subsection~\ref{subsec:Phi_map}.  It turns out in the Springer setting that this map extends to a graded vector space isomorphism between two different presentations of cohomology, one geometric and the other algebraic.  Furthermore, the monomials $\Ah$ correspond exactly to the Garsia-Procesi basis (see Subsection~\ref{subsec:A_equals_B_monomials}) in this Springer setting.

For arbitrary non-Springer Hessenberg varieties $\mathfrak{H}(X,h)$, the map $\Phi$ takes $\JT$-fillings to a different set of monomials.  The natural question to ask is, ``Are the new corresponding monomials $\Ah$ meaningful in this setting?''.  For a certain subclass of Hessenberg varieties called \textit{regular nilpotent}, the answer is yes.  This is shown in Section~\ref{sec:RegNilpHess_Setting}.  We easily construct a special ideal $J_h$ (see Subsection~\ref{subsec:forthcomingwork}) with some interesting properties.  The quotient of a polynomial ring by this ideal has basis $\Bh$ which coincides exactly with the set of monomials $\Ah$.  Recent work of Harada and Tymoczko suggests that our quotient may be a presentation for $H^*(\mathfrak{H}(X,h))$ when $X$ is regular nilpotent.  Little is known about the cohomology of arbitrary Hessenberg varieties in general.  We hope to extend results to this setting in future work.  We illustrate this goal in Figure~\ref{fig:The_Goal}.
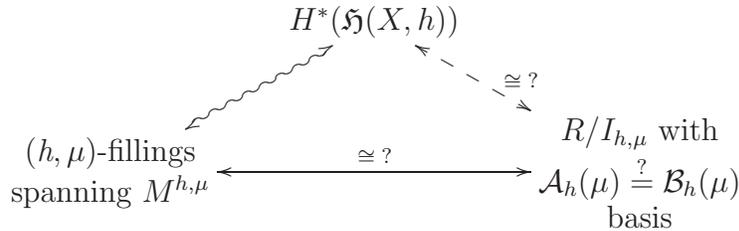
\begin{figure}[!ht]
$$
\xymatrix{
& H^*(\mathfrak{H}(X,h)) \ar@{<~>}[dl] \ar@{<-->}[dr]^{\cong\;?}\\
\txt{$\JT$-fillings\\spanning $M^{h,\mu}$}\ar@{<->}[rr]^{\cong\;?} & & \txt{$R/{I_{h,\mu}}$ with\\$\Ah\buildrel?\over=\Bh$\\basis}
}
$$
\vspace{-.15in}
\caption{\label{fig:The_Goal}Goal in the arbitrary Hessenberg setting.}
\end{figure}

\noindent The main results of this paper are the following:
\begin{itemize}
 \item In Section~\ref{sec:Springer_Setting}, we complete the three legs of the triangle in the Springer setting.  In this setting, the $\JT$-fillings are simply the row-strict tableaux.  They are the generating set for the vector space which we call $M^{\mu}$ (see Subsection~\ref{subsec:Remarks_on_Phi}).  The ideal $I_{h,\mu}$ in Figure~\ref{fig:The_Goal} is the famed Tanisaki ideal~\cite{Tani}, denoted $I_\mu$ in the literature.  It turns out that our set of monomials $\Ah$ coincides with the Garsia-Procesi basis $\B$ of monomials for the rational cohomology of the Springer varieties for $R:=\mathbb{Q}[x_1,\ldots,x_n]$.  Garsia and Procesi used a tree on Young diagrams to find $\B$.  We refine their construction and build a \textit{modified GP-tree for $\mu$} (see Definition~\ref{def:Modified_GP_Tree}).  This refinement helps us obtain more information from their tree, thus revealing our $\JT$-fillings in their construction of the basis.
 \item For each Hessenberg function $h$, we construct an ideal $J_h$ (see Subsection~\ref{subsec:forthcomingwork}) out of modified complete symmetric functions.  We identify a basis for the quotient $R/J_h$, where we take $R$ to be the ring $\mathbb{Z}[x_1,\ldots,x_n]$.
 \item To show that the bottom leg of the triangle holds in the regular nilpotent case, we construct what we call an \textit{$h$-tableau-tree} (see Definition~\ref{def:h_tab_tree}).  This tree plays the same role as its counterpart, the modified GP-tree, does in the Springer setting.  We find that the monomials $\Ah$ coincide with a natural basis $\Bh$ of monomials for $R/J_h$ (see Subsection~\ref{subsec:Ah_equals_Bh_RegNilpCase}).
 \item Recent results of Harada and Tymoczko~\cite{HT} give tantalizing evidence that the quotient $R/J_h$ may indeed be a presentation for $H^*(\mathfrak{H}(X,h))$ for a subclass of regular nilpotent Hessenberg varieties called Peterson varieties.  We conjecture $R/J_h$ is a presentation for the integral cohomology ring of the regular nilpotent Hessenberg varieties.
\end{itemize}

\newpage
\begin{acknowledgments}
The author thanks his advisor in this project, Julianna Tymoczko, for endless feedback at our many meetings.  Thanks also to Megumi Harada and Alex Woo for fruitful conversations.  He is also grateful to Fred Goodman for very helpful comments which significantly improved this manuscript.  Jonas Meyer and Erik Insko also gave useful input.  Lastly, I thank the anonymous referee for an exceptionally thorough reading of this manuscript and many helpful suggestions.
\end{acknowledgments}

\subsection{Brief history of the Springer setting}
Let $\mathfrak{N}(\mu)$ be the set of nilpotent elements in $\Mat_n(\mathbb{C})$ with Jordan blocks of weakly decreasing sizes $\mu_1\geq\mu_2\ldots\geq\mu_s>0$ so that $\sum_{i=1}^s \mu_i = n$.  The quest began 50 years ago to find the equations of the closure $\overline{\mathfrak{N}(\mu)}$ in $\Mat_n(\mathbb{C})$---that is, the generators of the ideal of polynomial functions on $\Mat_n(\mathbb{C})$ which vanish on $\mathfrak{N}(\mu)$.  When $\mu=(n)$, Kostant~\cite{Ko} showed in his fundamental 1963 paper that the ideal is given by the invariants of the conjugation action of $\GL_n(\mathbb{C})$ on $\Mat_n(\mathbb{C})$.  In 1981, De Concini and Procesi~\cite{dCP} proposed a set of generators for the ideals of the schematic intersections $\overline{\mathfrak{N}(\mu)} \cap T$ where $T$ is the set of diagonal matrices and $\mu$ is an arbitrary partition of $n$.  In 1982, Tanisaki~\cite{Tani} simplified their ideal; his simplification has since become known as the Tanisaki ideal $I_\mu$.  For a representation theoretic interpretation of this ideal in terms of representation theory of Lie algebras see Stroppel~\cite{Strop}.  In 1992, Garsia and Procesi~\cite{GP} showed that the ring $R_{\mu} = \mathbb{Q}[x_1,\ldots,x_n]/I_\mu$ is isomorphic to the cohomology ring of a variety called the \textit{Springer variety associated to a nilpotent element} $X \in \mathfrak{N}(\mu)$.  Much work has been done to simplify the description of the Tanisaki ideal even further, including work by Biagioli, Faridi, and Rosas~\cite{Biag} in 2008.  Inspired by their work, we generalize the Tanisaki ideal in the author's thesis~\cite{Mb-thesis} and forthcoming joint work~\cite{Mb2} for a subclass of the family of varieties that naturally extends Springer varieties, called Hessenberg varieties.

\subsection{Definition of a Hessenberg variety}\label{subsec:Hess_Defn}
Hessenberg varieties were introduced by De Mari, Procesi, and Shayman~\cite{dMPS} in 1992.  Let $h$ be a map from $\{1,2,\ldots,n\}$ to itself.  Denote $h_i$ to be the image of $i$ under $h$.  An $n$-tuple $h=(h_1,\ldots,h_n)$ is a \textit{Hessenberg function} if it satisfies the two constraints:
\begin{eqnarray*}
(a) & i \leq h_i \leq n, & i\in\{1,\ldots,n\}\\
(b) & h_i \leq h_{i+1}, & i\in\{1,\ldots,n-1\}.
\end{eqnarray*}
A \textit{flag} is a nested sequence of $\mathbb{C}$-vector spaces $V_1\subseteq V_2\subseteq \cdots \subseteq V_n=\mathbb{C}^n$ where each $V_i$ has dimension $i$.  The collection of all such flags is called the \textit{full flag variety}  $\mathfrak{F}$.  Fix a nilpotent operator $X \in \Mat_n(\mathbb{C})$.  We define a \textit{Hessenberg variety} to be the following subvariety of the full flag variety:
$$\mathfrak{H}(X,h) = \{\mbox{Flags}\in\mathfrak{F} \;|\; X\cdot V_i\subseteq V_{h(i)} \mbox{ for all $i$} \}.$$
Since conjugating the nilpotent $X$ will produce a variety homeomorphic to $\mathfrak{H}(X,h)$ \cite[Proposition~2.7]{Tym}, we can assume that the nilpotent operator $X$ is in Jordan canonical form, with a weakly decreasing sequence of Jordan block sizes $\mu_1\geqslant\cdots\geqslant\mu_s>0$ so that $\sum_{i=1}^s \mu_i = n$.  We may view $\mu$ as a partition of $n$ or as a Young diagram with row lengths $\mu_i$.  Thus there is a one-to-one correspondence between Young diagrams and conjugacy classes of nilpotent operators.

For a fixed nilpotent operator $X$, there are two extremal cases for the choice of the Hessenberg function $h$: the minimal case occurs when $h(i)=i$ for all $i$, and the maximal case occurs when $h(i)=n$ for all $i$.  In the first case when $h=(1,2,\ldots,n)$, the variety $\mathfrak{H}(X,h)$ obtained is the Springer variety, which we denote $\mathfrak{S}_X$.  In the second case when $h=(n,\ldots,n)$, all flags satisfy the condition $X\cdot V_i\subseteq V_{h(i)}$ for all $i$ and hence $\mathfrak{H}(X,h)$ is the full flag variety $\mathfrak{F}$.

\subsection{Using \texorpdfstring{$\JT$}{(h,mu)}-fillings to compute the Betti numbers of Hessenberg varieties}\label{subsec:Tym_Work}
In 2005, Tymoczko~\cite{Tym} gave a combinatorial procedure for finding the dimensions of the graded parts of $H^*(\mathfrak{H}(X,h))$.  Let the Young diagram $\mu$ correspond to the Jordan canonical form of $X$ as given in Subsection~\ref{subsec:Hess_Defn}.  Any injective placing of the numbers $1,\ldots,n$ in a diagram $\mu$ with $n$ boxes is called a \textit{filling of $\mu$}.  It is called an \textit{(h-$\mu$)-filling} if it adheres to the following rule: a horizontal adjacency  \setlength{\unitlength}{.15in}\begin{picture}(2,1)(0,0)
\linethickness{.25pt}
\multiput(0,0)(0,1){2}{\line(1,0){2}}
\multiput(0,0)(1,0){3}{\line(0,1){1}}
\put(.5,.5){\makebox(0,0){\begin{small}$k$\end{small}}}
\put(1.5,.5){\makebox(0,0){\begin{small}$j$\end{small}}}\end{picture}
is allowed only if $k\leq h(j)$.  If $h$ and $\mu$ are clear from context, then we often call this a \textit{permissible filling}.  When $h=(3,3,3)$ all permissible fillings of $\mu=(2,1)$ coincide with all possible fillings as shown below.

\begin{figure}[!ht]
$$
\setlength{\unitlength}{.15in}\begin{picture}(2,2)(0,0)
\linethickness{.25pt}
\multiput(0,1)(0,1){2}{\line(1,0){2}}
\multiput(0,0)(1,0){2}{\line(0,1){2}}
\put(0,0){\line(1,0){1}}
\put(2,2){\line(0,-1){1}}
\put(.5,1.5){\makebox(0,0){1}}
\put(1.5,1.5){\makebox(0,0){2}}
\put(.5,.5){\makebox(0,0){3}}
\end{picture}, \;\;
\begin{picture}(2,2)(0,0)
\linethickness{.25pt}
\multiput(0,1)(0,1){2}{\line(1,0){2}}
\multiput(0,0)(1,0){2}{\line(0,1){2}}
\put(0,0){\line(1,0){1}}
\put(2,2){\line(0,-1){1}}
\put(.5,1.5){\makebox(0,0){1}}
\put(1.5,1.5){\makebox(0,0){3}}
\put(.5,.5){\makebox(0,0){2}}
\end{picture}, \;\;
\begin{picture}(2,2)(0,0)
\linethickness{.25pt}
\multiput(0,1)(0,1){2}{\line(1,0){2}}
\multiput(0,0)(1,0){2}{\line(0,1){2}}
\put(0,0){\line(1,0){1}}
\put(2,2){\line(0,-1){1}}
\put(.5,1.5){\makebox(0,0){2}}
\put(1.5,1.5){\makebox(0,0){3}}
\put(.5,.5){\makebox(0,0){1}}
\end{picture}, \;\;
\begin{picture}(2,2)(0,0)
\linethickness{.25pt}
\multiput(0,1)(0,1){2}{\line(1,0){2}}
\multiput(0,0)(1,0){2}{\line(0,1){2}}
\put(0,0){\line(1,0){1}}
\put(2,2){\line(0,-1){1}}
\put(.5,1.5){\makebox(0,0){2}}
\put(1.5,1.5){\makebox(0,0){1}}
\put(.5,.5){\makebox(0,0){3}}
\end{picture}, \;\;
\begin{picture}(2,2)(0,0)
\linethickness{.25pt}
\multiput(0,1)(0,1){2}{\line(1,0){2}}
\multiput(0,0)(1,0){2}{\line(0,1){2}}
\put(0,0){\line(1,0){1}}
\put(2,2){\line(0,-1){1}}
\put(.5,1.5){\makebox(0,0){3}}
\put(1.5,1.5){\makebox(0,0){1}}
\put(.5,.5){\makebox(0,0){2}}
\end{picture}, \mbox{ and  }
\begin{picture}(2,2)(0,0)
\linethickness{.25pt}
\multiput(0,1)(0,1){2}{\line(1,0){2}}
\multiput(0,0)(1,0){2}{\line(0,1){2}}
\put(0,0){\line(1,0){1}}
\put(2,2){\line(0,-1){1}}
\put(.5,1.5){\makebox(0,0){3}}
\put(1.5,1.5){\makebox(0,0){2}}
\put(.5,.5){\makebox(0,0){1}}
\end{picture}
$$
\caption{\label{fig:(333)-fillings_of_(2,1)}The six $\JT$-fillings for $h=(3,3,3)$ and $\mu=(2,1)$.}
\end{figure}

\noindent If $h=(1,3,3)$ then the fourth and fifth tableaux in Figure~\ref{fig:(333)-fillings_of_(2,1)} are not $\JT$-fillings since \begin{picture}(2,1)(0,0)
\linethickness{.25pt}
\multiput(0,0)(0,1){2}{\line(1,0){2}}
\multiput(0,0)(1,0){3}{\line(0,1){1}}
\put(.5,.5){\makebox(0,0){2}}
\put(1.5,.5){\makebox(0,0){1}}
\end{picture}
and \begin{picture}(2,1)(0,0)
\linethickness{.25pt}
\multiput(0,0)(0,1){2}{\line(1,0){2}}
\multiput(0,0)(1,0){3}{\line(0,1){1}}
\put(.5,.5){\makebox(0,0){3}}
\put(1.5,.5){\makebox(0,0){1}}
\end{picture}
are not allowable adjacencies for this $h$.

\begin{definition}[Dimension pair]\label{def:dimension_pair_(a,b)}
Let $h$ be a Hessenberg function and $\mu$ be a partition of $n$.  The pair $(a,b)$ is a \textit{dimension pair} of an $\JT$-filling $T$ if 
\begin{enumerate}
\item $b>a$,
\item $b$ is below $a$ and in the same column, or $b$ is in any column strictly to the left of $a$, and
\item if some box with filling $c$ happens to be adjacent and to the right of $a$, then $b\leq h(c)$.
\end{enumerate}
\end{definition}

\begin{theorem}[Tymoczko]~\cite[Theorem 1.1]{Tym}
The dimension of $H^{2k}(\mathfrak{H}(X,h))$ is the number of $\JT$-fillings $T$ such that $T$ has $k$ dimension pairs.
\end{theorem}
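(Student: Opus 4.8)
The plan is to exhibit a paving of $\mathfrak{H}(X,h)$ by complex affine cells indexed by the $\JT$-fillings, with the cell attached to a filling $T$ having complex dimension equal to the number of dimension pairs of $T$. The theorem then follows from the standard fact that a complex algebraic variety admitting a paving by affine spaces has $H^{2k}$ free abelian of rank equal to the number of cells of complex dimension $k$ and has vanishing odd cohomology; in particular $H^{2k}$ is free and $H^{2k+1}=0$, which yields the stated dimension formula.

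First I would fix $X$ in Jordan canonical form with respect to the standard basis $e_1,\dots,e_n$ of $\mathbb{C}^n$, grouping $\{1,\dots,n\}$ into the Jordan strings whose lengths are the parts $\mu_1\geqslant\cdots\geqslant\mu_s$ of $\mu$, and intersect $\mathfrak{H}(X,h)$ with the Schubert cells $C_w=B\dot w B/B$ ($w\in S_n$, $B$ the group of upper-triangular matrices) of the full flag variety $\mathfrak{F}$. Writing a flag of $C_w$ in its canonical echelon form identifies $C_w$ with an affine space $\mathbb{A}^{\ell(w)}$ whose coordinates are indexed by the inversions of $w$; substituting this parametrization into the defining containments $X\cdot V_i\subseteq V_{h(i)}$, and using that $X$ is a sum of shift operators on the Jordan strings (so its matrix has entries only $0$ and $1$), each containment becomes an explicit system of equations in the echelon coordinates. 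The key lemma, which is the heart of the argument, is that this system is always equivalent to setting a certain subset of the coordinates equal to $0$ — it imposes no further, in particular no nonlinear, constraints — so that $C_w\cap\mathfrak{H}(X,h)$ is either empty or a coordinate affine subspace of $\mathbb{A}^{\ell(w)}$.

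Next I would match the nonempty cells with the $\JT$-fillings. A filling $T$ of $\mu$ determines, through a fixed reading order of the boxes, a permutation $w_T$ whose associated coordinate flag is the unique torus-fixed point of $C_{w_T}$; reading along rows of $\mu$ encodes the Jordan-string structure of $X$, and one checks that $C_{w_T}\cap\mathfrak{H}(X,h)\neq\varnothing$ precisely when every horizontal adjacency of $T$ with entry $k$ immediately left of entry $j$ satisfies $k\leqslant h(j)$ — that is, precisely when $T$ is an $\JT$-filling — since this is exactly the condition that the vector $X e_k$, which sits one step down in the Jordan string containing $e_j$, can be absorbed by the flag. Finally, tracking which echelon coordinates survive the Hessenberg equations, I would compute $\dim_{\mathbb{C}}\bigl(C_{w_T}\cap\mathfrak{H}(X,h)\bigr)$ and identify it term by term with Definition~\ref{def:dimension_pair_(a,b)}: a surviving inversion of $w_T$ corresponds to a pair $(a,b)$ with $b>a$ such that $b$ lies below $a$ in the same column, or in a column strictly to the left of $a$ (conditions (1)--(2)), and the relations $X\cdot V_i\subseteq V_{h(i)}$ kill exactly those pairs for which the box $c$ immediately right of $a$ violates $b\leqslant h(c)$ (condition (3)); what remains is exactly the set of dimension pairs.

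The step I expect to be the main obstacle is the coordinate-subspace lemma of the second paragraph: a priori the containments $X\cdot V_i\subseteq V_{h(i)}$ could yield genuinely entangled or nonlinear equations in the echelon coordinates, and to rule this out one must exploit the special form of $X$ together with an induction along the flag $V_1\subseteq V_2\subseteq\cdots\subseteq V_n$, showing at each stage that the newly imposed equations are already ``in solved form.'' Once this is in place, the reading-order bijection and the matching of the dimension count with conditions (1)--(3) of Definition~\ref{def:dimension_pair_(a,b)} are essentially careful bookkeeping — the most delicate part being the verification that condition (3) corresponds exactly to the coordinates eliminated by the Hessenberg relations rather than by the Schubert cell structure alone.
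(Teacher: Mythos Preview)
Your proposal is correct in spirit and matches the approach the paper attributes to Tymoczko: the paper does not prove this theorem itself but simply cites \cite{Tym} and remarks that the argument there proceeds by exhibiting an explicit paving of $\mathfrak{H}(X,h)$ by affines, which determines the Betti numbers. Your sketch --- intersecting with Schubert cells, showing the Hessenberg conditions cut out coordinate affine subspaces, identifying nonempty cells with $\JT$-fillings, and matching surviving coordinates with dimension pairs --- is exactly that strategy, so there is nothing to compare beyond noting that you have supplied the outline the paper deliberately omits.
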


\begin{remark}
Tymoczko proves this theorem by providing an explicit geometric construction which partitions $\mathfrak{H}(X,h)$ into pieces homeomorphic to complex affine space.  In fact, this is a paving by affines and consequently determines the Betti numbers of $\mathfrak{H}(X,h)$.  See \cite{Tym} for precise details.
\end{remark}

\begin{example}
Fix $h=(1,3,3)$ and let $\mu$ have shape $(2,1)$.  Figure~\ref{fig:four_JT_fillings} gives all possible $\JT$-fillings and their corresponding dimension pairs.  We conclude $H^{0}$ has dimension 1 since exactly one filling has 0 dimension pairs.  $H^{2}$ has dimension 2 since exactly two fillings have 1 dimension pair each.  Lastly, $H^{4}$ has dimension 1 since the remaining filling has 2 dimension pairs.

\begin{figure}[!ht]
$$
\begin{array}{rclcrcl}
\begin{picture}(2,2)(0,0)
\linethickness{.25pt}
\multiput(0,1)(0,1){2}{\line(1,0){2}}
\multiput(0,0)(1,0){2}{\line(0,1){2}}
\put(0,0){\line(1,0){1}}
\put(2,2){\line(0,-1){1}}
\put(.5,1.5){\makebox(0,0){1}}
\put(1.5,1.5){\makebox(0,0){2}}
\put(.5,.5){\makebox(0,0){3}}
\end{picture} & \longleftrightarrow & (1,3),(2,3) & &
\begin{picture}(2,2)(0,0)
\linethickness{.25pt}
\multiput(0,1)(0,1){2}{\line(1,0){2}}
\multiput(0,0)(1,0){2}{\line(0,1){2}}
\put(0,0){\line(1,0){1}}
\put(2,2){\line(0,-1){1}}
\put(.5,1.5){\makebox(0,0){1}}
\put(1.5,1.5){\makebox(0,0){3}}
\put(.5,.5){\makebox(0,0){2}}
\end{picture} & \longleftrightarrow & (1,2)\\
\begin{picture}(2,2)(0,0)
\linethickness{.25pt}
\multiput(0,1)(0,1){2}{\line(1,0){2}}
\multiput(0,0)(1,0){2}{\line(0,1){2}}
\put(0,0){\line(1,0){1}}
\put(2,2){\line(0,-1){1}}
\put(.5,1.5){\makebox(0,0){2}}
\put(1.5,1.5){\makebox(0,0){3}}
\put(.5,.5){\makebox(0,0){1}}
\end{picture} &\longleftrightarrow& \mbox{no dimension pairs} & &
\begin{picture}(2,2)(0,0)
\linethickness{.25pt}
\multiput(0,1)(0,1){2}{\line(1,0){2}}
\multiput(0,0)(1,0){2}{\line(0,1){2}}
\put(0,0){\line(1,0){1}}
\put(2,2){\line(0,-1){1}}
\put(.5,1.5){\makebox(0,0){3}}
\put(1.5,1.5){\makebox(0,0){2}}
\put(.5,.5){\makebox(0,0){1}}
\end{picture} & \longleftrightarrow & (2,3)
\end{array}
$$
\caption{\label{fig:four_JT_fillings}The four $\JT$-fillings for $h=(1,3,3)$ and $\mu=(2,1)$.}
\end{figure}
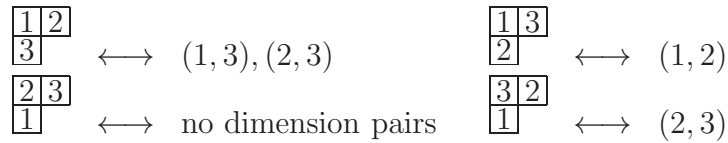
\end{example}

\subsection{The map \texorpdfstring{$\Phi$}{Phi} from \texorpdfstring{$\JT$}{(h,mu)}-fillings to monomials \texorpdfstring{$\Ah$}{A-h(mu)}}\label{subsec:Phi_map}
Let $R$ be the polynomial ring $\mathbb{Z}[x_1,\ldots,x_n]$.  We introduce a map from $\JT$-fillings onto a set of monomials in $R$.   First, we provide some notation for the set of dimension pairs.

\begin{definition}[The set $\D$ of dimension pairs of $T$]\label{def:dimension_pairs_set}
Fix a partition $\mu$ of $n$.  Let $T$ be an $\JT$-filling.  Define $\D$ to be the set of dimension pairs of $T$ according to Subsection~\ref{subsec:Tym_Work}. For a fixed $y \in \{2,\ldots,n\}$, define
$$\D_y := \left\{ (x,y) \;|\; (x,y) \in \D \right\}.$$
The number of dimension pairs of an $\JT$-filling $T$ is called the \textit{dimension of T}.
\end{definition}

Fix a Hessenberg function $h$ and a partition $\mu$ of $n$.  The map $\Phi$ is the following:
$$\Phi: \{\JT\mbox{-fillings}\} \longrightarrow R \mbox{\;\;\;\;\; defined by\;\;\;\;\;} T \longmapsto \prod\limits_{\substack{(i,j) \in \D_j \\ 2 \leq j \leq n}} x_j.$$
Denote the image of $\Phi$ by $\Ah$.  By abuse of notation we also denote the $\mathbb{Q}$-linear span of these monomials by $\Ah$.  Denote the formal $\mathbb{Q}$-linear span of the $\JT$-fillings by $M^{h,\mu}$.  Extending $\Phi$ linearly, we get a map on vector spaces $\Phi: M^{h,\mu} \rightarrow \Ah$.

\begin{remark}\label{rem:aBa-monoms_have_no_1}
Any monomial $\X \in \Ah$ will be of the form $x_2^{\alpha_2} \cdots x_n^{\alpha_n}$.  That is, the variable $x_1$ can never appear in $\X$ since 1 will never be the larger number in a dimension pair.
\end{remark}

\begin{theorem}\label{thm:JT-fillings_to_Monomials}
If $\mu$ is a partition of $n$, then $\Phi$ is a well-defined degree-preserving map from a set of $\JT$-fillings onto monomials $\Ah$.  That is, $r$-dimensional $\JT$-fillings map to degree-$r$ monomials in $\Ah$.
\end{theorem}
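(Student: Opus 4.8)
The plan is to verify the three assertions separately: that $\Phi$ is well-defined, that it is degree-preserving, and that it maps onto $\Ah$. Surjectivity is essentially free: $\Ah$ was defined to be the image of $\Phi$, so once $\Phi$ is seen to be a genuine function there is nothing further to prove there. For well-definedness I would observe that, with $h$ and $\mu$ fixed, each $\JT$-filling $T$ determines a unique finite set $\D \subseteq \{1,\dots,n\}^2$ of dimension pairs via Definition~\ref{def:dimension_pair_(a,b)}; hence the expression $\Phi(T) = \prod_{(i,j)\in\D_j,\ 2\le j\le n} x_j$ is an honest monomial in $R=\mathbb{Z}[x_1,\dots,x_n]$ rather than a merely formal symbol, since every variable that occurs has index between $2$ and $n$, and the assignment $T\mapsto\Phi(T)$ is single-valued.

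The substance of the statement is the degree count, which I would carry out by collecting the factors of the product according to the second coordinate of each dimension pair: $\Phi(T)=\prod_{j=2}^{n} x_j^{\,|\D_j|}$. Because every dimension pair $(a,b)$ satisfies $b>a\ge 1$ and therefore $b\ge 2$, the subsets $\D_2,\dots,\D_n$ form a partition of $\D$; consequently $\deg\Phi(T)=\sum_{j=2}^{n}|\D_j|=|\D|$, which is precisely the dimension of $T$ in the sense of Definition~\ref{def:dimension_pairs_set}. Thus an $r$-dimensional $\JT$-filling is sent to a degree-$r$ monomial of $\Ah$, as claimed. The observation in Remark~\ref{rem:aBa-monoms_have_no_1} that $x_1$ never occurs is exactly what makes it harmless to index the product over $2\le j\le n$.

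I do not expect a genuine obstacle here: the statement is a bookkeeping lemma that ties together the definitions of ``dimension pair'', ``dimension of $T$'', and the map $\Phi$. The only point that warrants a moment's attention is confirming that the index set of the product is arranged so that each dimension pair contributes exactly one variable --- neither zero nor two --- which is the role of requiring simultaneously $(i,j)\in\D_j$ and $2\le j\le n$; once that is checked, the degree identity, and hence the whole statement, follows immediately.
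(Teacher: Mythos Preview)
Your proposal is correct and follows the same approach as the paper. The paper's proof is essentially a one-liner---``$T$ has $r$ dimension pairs by definition; by construction $\Phi(T)$ will have degree $r$''---and your argument simply unpacks that ``by construction'' into the explicit partition $\D=\bigcup_{j=2}^{n}\D_j$ and the resulting degree count $\deg\Phi(T)=\sum_j|\D_j|=|\D|$, together with the observation that surjectivity onto $\Ah$ is tautological.
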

\begin{proof}
Let $T$ be an $\JT$-filling of dimension $r$.  Then $T$ has $r$ dimension pairs by definition.  By construction $\Phi(T)$ will have degree $r$.  Hence the map is degree-preserving.
\end{proof}


\section{The Springer setting}\label{sec:Springer_Setting}
In this section we will fill in the details of Figure~\ref{fig:The_Springer_Triangle}.  Recall that if we fix the Hessenberg function $h=(1,2,\ldots,n)$ and let the nilpotent operator $X$ (equivalently, the shape $\mu$) vary, the Hessenberg variety $\mathfrak{H}(X,h)$ obtained is the Springer variety $\mathfrak{S}_X$.  Since this section focuses on this setting, we omit $h$ in our notation.  For instance, the image of $\Phi$ is $\A$.  Similarly, the Garsia-Procesi basis will be denoted $\B$ (as it is denoted in the literature~\cite{GP}).

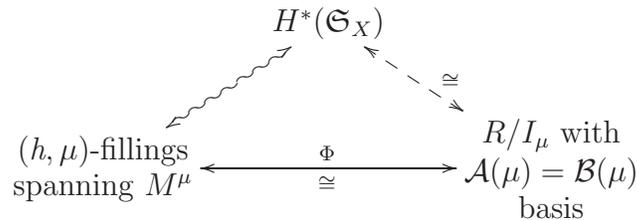
\begin{figure}[!ht]
$$
\xymatrix{
& H^*(\mathfrak{S}_X) \ar@{<~>}[dl] \ar@{<-->}[dr]^{\cong}\\
\txt{$\JT$-fillings\\spanning $M^{\mu}$}\ar@{<->}[rr]_{\cong}^{\Phi} & & \txt{$R/I_{\mu}$ with\\$\A=\B$\\basis}
}
$$
\caption{\label{fig:The_Springer_Triangle}Springer setting.}
\end{figure}

In Subsection~\ref{subsec:Remarks_on_Phi}, we recast the statement of the graded vector space morphism $\Phi$ to the setting of Springer varieties.  In Subsection~\ref{subsec:Psi_map}, we define an inverse map $\Psi$ from the span of monomials $\A$ to the formal linear span of $\JT$-fillings, thereby giving not only a bijection of sets but also a graded vector space isomorphism.  We prove that $\Psi$ is an isomorphism in Corollary~\ref{cor:A_and_M_isom_vect_spaces}.  This completes the bottom leg of the triangle in Figure~\ref{fig:The_Springer_Triangle}.  In Subsection~\ref{subsec:A_equals_B_monomials}, we modify the work of Garsia and Procesi~\cite{GP} and develop a technique to build the $\JT$-filling corresponding to a monomial in their quotient basis $\B$.  We conclude $\A = \B$.

\subsection{Remarks on the map \texorpdfstring{$\Phi$}{Phi} when \texorpdfstring{$h=(1,2,\ldots,n)$}{h=(1,2,...,n)}}\label{subsec:Remarks_on_Phi}
Fix a partition $\mu$ of $n$.  Upon considering the combinatorial rules governing a permissible filling of a Young diagram, we see that if $h=(1,2,\ldots,n)$, then the $\JT$-fillings are just the row-strict tableaux of shape $\mu$.  Suppressing $h$, we denote the formal linear span of these tableaux by $M^{\mu}$.  This is the standard symbol for this space, commonly known as the permutation module corresponding to $\mu$ (see expository work of Fulton~\cite{F}).  In this specialized setting, the map $\Phi$ is simply
$$\Phi: M^\mu \relbar\joinrel\twoheadrightarrow \A \mbox{\;\;\;\;\; defined by\;\;\;\;\;} T \longmapsto \prod\limits_{\substack{(i,j) \in \D_j \\ 2 \leq j \leq n}} x_j,$$

\noindent and hence Theorem~\ref{thm:JT-fillings_to_Monomials} specializes to the following.
\begin{theorem}\label{thm:Tab_to_GP}
If $\mu$ is a partition of $n$, then $\Phi$ is a well-defined degree-preserving map from the set of row-strict tableaux in $M^\mu$ onto the monomials $\A$.  That is, $r$-dimensional tableaux in $M^\mu$ map to degree-$r$ monomials in $\A$.
\end{theorem}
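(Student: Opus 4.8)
The plan is to obtain this statement as the specialization of Theorem~\ref{thm:JT-fillings_to_Monomials} to the Hessenberg function $h=(1,2,\ldots,n)$, so the bulk of the argument is simply recording the combinatorial identification already noted in Subsection~\ref{subsec:Remarks_on_Phi}. First I would verify that when $h=(1,2,\ldots,n)$ the set of $\JT$-fillings is exactly the set of row-strict tableaux of shape $\mu$: a horizontal adjacency with left entry $k$ and right entry $j$ is permissible precisely when $k\le h(j)=j$, and since a filling of $\mu$ is injective this forces $k<j$; hence the permissible fillings are exactly those whose entries strictly increase along each row. Consequently the formal $\mathbb{Q}$-linear span $M^{h,\mu}$ is the permutation module $M^\mu$, and the image $\Ah=\Phi(M^{h,\mu})$ is the set $\A$.

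With this identification in place, Theorem~\ref{thm:JT-fillings_to_Monomials} applies verbatim and yields each assertion of the theorem. The map $\Phi$ is well defined because every row-strict tableau $T$ determines a unique set of dimension pairs $\D$, hence a unique monomial $\prod_{(i,j)\in\D_j,\ 2\le j\le n} x_j$. It is surjective onto $\A$ by the very definition of $\A$ as the image of $\Phi$. Finally it is degree-preserving: exactly as in the proof of Theorem~\ref{thm:JT-fillings_to_Monomials}, a tableau of dimension $r$ has exactly $r$ dimension pairs, so by construction its image is a monomial of degree $r$.

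I do not anticipate any real obstacle here, since the statement is a direct corollary of the general theorem once one observes that the $\JT$-fillings coincide with the row-strict tableaux. The only point worth a sentence of care is the step upgrading the defining inequality $k\le j$ to the strict inequality $k<j$ using injectivity of the filling, which is what makes ``permissible'' agree with the standard notion of ``row-strict''; everything else is immediate from the construction of $\Phi$.
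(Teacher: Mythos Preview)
Your proposal is correct and matches the paper's approach exactly: the paper simply states that this theorem is the specialization of Theorem~\ref{thm:JT-fillings_to_Monomials} to $h=(1,2,\ldots,n)$, after having noted that the $\JT$-fillings in this case are precisely the row-strict tableaux. If anything, you supply slightly more detail than the paper does, including the observation that injectivity turns $k\le j$ into $k<j$.
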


\begin{example}\label{exam:tabloid_to_monomial}
Let $\mu = (2,2,2)$ have the filling $T=$ \setlength{\unitlength}{.15in}
\begin{picture}(2,3)(0,0)
\linethickness{.25pt}
\put(0,0){\line(1,0){2}}
\put(0,1){\line(1,0){2}}
\put(0,2){\line(1,0){2}}
\put(0,3){\line(1,0){2}}
\put(0,3){\line(0,-1){3}}
\put(1,3){\line(0,-1){3}}
\put(2,3){\line(0,-1){3}}
\put(.5,.5){\makebox(0,0){4}}
\put(1.5,.5){\makebox(0,0){5}}
\put(.5,1.5){\makebox(0,0){3}}
\put(1.5,1.5){\makebox(0,0){6}}
\put(.5,2.5){\makebox(0,0){1}}
\put(1.5,2.5){\makebox(0,0){2}}
\end{picture} \;. Suppressing the commas for ease of viewing, the contributing dimension pairs are (23), (24), (25), (26) and (34).  Observe $(23) \in \D_3$, $(24),(34) \in \D_4$, $(25) \in \D_5$, and $(26) \in \D_6$.  Hence $\Phi$ takes this tableau to the monomial $x_3 x_4^2 x_5 x_6 \in \A$.
\end{example}

In the next subsection we will give an explicit algorithm to recover the original row-strict tableau from any monomial in $\A$.  In particular, Example~\ref{exam:inverse_map_for_three_row} applies the inverse algorithm to the example above.

\subsection{The inverse map \texorpdfstring{$\Psi$}{Psi} from monomials in \texorpdfstring{$\A$}{A(mu)} to \texorpdfstring{$\JT$}{(h,mu)}-fillings}\label{subsec:Psi_map}

The map back from a monomial $\X\in\A$ to an $\JT$-filling is not as transparent.  We will construct the tableau by filling it in reverse order starting with the number $n$.  The next definitions give us the language to speak about where we can place $n$ and the subsequent numbers.

\begin{definition}[Composition of $n$] Let $\rho$ be a partition of $n$ corresponding to a diagram of shape $(\rho_1, \rho_2, \ldots, \rho_s)$ that need not be a proper Young diagram.  That is, the sequence neither has to weakly increase nor decrease and some $\rho_i$ may even be zero.  An ordered partition of this kind is often called a \textit{composition} of $n$ and is denoted $\rho\vDash n$.
\end{definition}

\begin{definition}[Dimension-ordering of a composition]\label{def:dimension_ordering}
We define a \textit{dimension-ordering} of certain boxes in a composition $\rho$ in the following manner.  Order the boxes on the far-right of each row starting from the rightmost column to the leftmost column going from top to bottom in the columns containing more than one far-right box.
\end{definition}

\begin{example} If $\rho=(2,1,0,3,4)\vDash12$, then the ordering is

\medskip
\begin{center}
\setlength{\unitlength}{.15in}
\begin{picture}(4,6)(0,0)
\linethickness{.25pt}
\put(0,3){\line(1,0){2}}
\put(0,4){\line(1,0){2}}
\put(0,5){\line(1,0){2}}
\put(0,6){\line(1,0){2}}
\put(0,6){\line(0,-1){3}}
\put(1,6){\line(0,-1){3}}
\put(2,6){\line(0,-1){1}}
\put(2,4){\line(0,-1){1}}
\put(1.5,5.45){\makebox(0,0){3}}
\put(.5,4.45){\makebox(0,0){5}}
\put(1.5,3.425){\makebox(0,0){4}}
\multiput(0,0)(1,0){4}{\line(0,1){2}}
\put(4,0){\line(0,1){1}}
\put(0,0){\line(1,0){4}}
\put(0,1){\line(1,0){4}}
\put(0,2){\line(1,0){3}}
\put(3.5,.425){\makebox(0,0){1}}
\put(2.5,1.45){\makebox(0,0){2}}
\end{picture}~.
\end{center}
Notice that imposing a dimension-ordering on a diagram places exactly one number in the far-right box of each non-empty row.
\end{example}

\begin{definition}[Subfillings and subdiagrams of a composition]\label{def:subfilling}
Let $T$ be a filling of a composition $\rho$ of $n$.  If the values $i+1, i+2,\ldots, n$ and their corresponding boxes are removed from $T$, then what remains is called a \textit{subfilling  of $T$} and is denoted $T^{(i)}$.  Ignoring the numbers in these remaining $i$ boxes, the shape is called a \textit{subdiagram of $\rho$} and is denoted $\rho^{(i)}$.
\end{definition}

Observe that $\rho^{(i)}$ need no longer be a composition.  For example, let $\rho=$ \setlength{\unitlength}{.15in}\begin{picture}(3,1)(0,0) \linethickness{.25pt}\multiput(0,0)(0,1){2}{\line(1,0){3}}
\multiput(0,0)(1,0){4}{\line(0,1){1}}
\end{picture} have the filling $T=$ \setlength{\unitlength}{.15in}\begin{picture}(3,1)(0,0) \linethickness{.25pt}\multiput(0,0)(0,1){2}{\line(1,0){3}}
\multiput(0,0)(1,0){4}{\line(0,1){1}}
\put(.5,.45){\makebox(0,0){1}}\put(1.5,.45){\makebox(0,0){3}}\put(2.5,.45){\makebox(0,0){2}}
\end{picture}.  Then $T^{(2)}$ is \begin{picture}(3,1)(0,0) \multiput(0,0)(0,1){2}{\line(1,0){1}}
\put(2,0){\line(1,0){1}} \put(2,1){\line(1,0){1}}
\multiput(0,0)(1,0){4}{\line(0,1){1}}
\put(.5,.45){\makebox(0,0){1}}\put(2.5,.45){\makebox(0,0){2}}
\end{picture} and so $\rho^{(2)}$ gives the subdiagram \begin{picture}(3,1)(0,0) \multiput(0,0)(0,1){2}{\line(1,0){1}}
\put(2,0){\line(1,0){1}} \put(2,1){\line(1,0){1}}
\multiput(0,0)(1,0){4}{\line(0,1){1}}
\end{picture} which is not a composition.  The next property gives a sufficient condition on $T$ to ensure $\rho^{(i)}$ is a composition.

\begin{property}
A filling $T$ of a composition $\rho$ of $n$ satisfies the \textit{subfilling property} if the number $i$ is in the rightmost box of some row of the subfilling $T^{(i)}$ for each $i \in \{1,\ldots,n\}$.
\end{property}

\begin{lemma}\label{lem:dim_ordered_filling_implies_row_strict} Let $T$ be a filling of a composition $\rho$ of $n$.  Then the following are equivalent:
\vspace{-.05in}
\begin{center}
\begin{tabular}{c l}
(a) & $T$ satisfies the Subfilling Property.\\
(b) & $T$ is a row-strict filling of $\rho$.
\end{tabular}\end{center}

\noindent In particular if the composition $\rho$ is a Young diagram satisfying the Subfilling Property, then $T$ lies in $M^{\rho}$.
\end{lemma}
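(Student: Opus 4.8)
The plan is to prove the two implications of the equivalence separately and then deduce the ``in particular'' clause. Throughout I would use the description recorded in Subsection~\ref{subsec:Remarks_on_Phi}: for $h=(1,2,\ldots,n)$ a row-strict filling of a diagram is exactly an injective filling whose entries strictly increase from left to right along every row, so a ``horizontal descent'' (an adjacent pair with the larger entry on the left) is precisely the failure of row-strictness.

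First I would dispatch the easy direction $(b)\Rightarrow(a)$, which needs no induction. Assume $T$ is row-strict and fix $i\in\{1,\ldots,n\}$; let $r$ be the row of $\rho$ containing the entry $i$. In the subfilling $T^{(i)}$ the surviving boxes of row $r$ are exactly those carrying an entry $\le i$, and since row $r$ strictly increases, every box of row $r$ lying to the right of the box holding $i$ carries an entry exceeding $i$ and is deleted. Hence the box holding $i$ is the rightmost surviving box of row $r$ in $T^{(i)}$, which is exactly the Subfilling Property for this $i$.

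For $(a)\Rightarrow(b)$ I would argue by contradiction, the key point being that a single horizontal descent already violates the Subfilling Property at a well-chosen stage. Suppose $T$ satisfies the Subfilling Property but is not row-strict, so some row $r$ has horizontally adjacent boxes, at positions $q$ and $q+1$, with entries $v>w$. Set $i=v$. Since $w<v=i$, the box at position $q+1$ survives in $T^{(i)}$, and it lies strictly to the right of the box holding $i$ (which sits at position $q$); thus the box holding $i$ is not the rightmost surviving box of its row in $T^{(i)}$, contradicting the Subfilling Property applied with this value $i$. Therefore every pair of horizontally adjacent entries increases left to right, i.e. $T$ is row-strict. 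Finally, for the last sentence: if $\rho$ is a genuine Young diagram and $T$ satisfies the Subfilling Property, then by the equivalence just proved $T$ is a row-strict tableau of shape $\rho$, hence one of the spanning tableaux of the permutation module $M^{\rho}$ from Subsection~\ref{subsec:Remarks_on_Phi}, so $T\in M^{\rho}$.

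I expect $(a)\Rightarrow(b)$ to be the only step requiring any care, and the difficulty to be organizational rather than mathematical: the natural instinct is to track the shapes $\rho^{(i)}$ and prove by induction on $i$ that each is left-justified, but $\rho^{(i)}$ need not even be a composition, and the argument controlling it would have to invoke the Subfilling Property at a \emph{later} stage, making the induction awkward. The clean route is instead to localize to a single horizontal adjacency and invoke the Subfilling Property just at the stage indexed by the larger of the two entries involved, as above.
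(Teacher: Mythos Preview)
Your proof is correct and is essentially the paper's argument: both directions are proved by locating a single horizontal adjacency and invoking the Subfilling Property at the stage indexed by the larger of the two entries (the paper phrases both implications as contrapositives, while you do $(b)\Rightarrow(a)$ directly, but the content is identical). Your closing remark about avoiding an induction on the shapes $\rho^{(i)}$ is well taken but not needed for the comparison, since the paper also bypasses any such induction.
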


\begin{proof}
Let $T$ be a filling of composition $\rho$ of $n$.  Suppose $T$ is not row-strict.  Then there exists some row in $\rho$ with an adjacent filling of two numbers \setlength{\unitlength}{.15in}\begin{picture}(2,1)(0,0)
\linethickness{.25pt}
\multiput(0,0)(0,1){2}{\line(1,0){2}}
\multiput(0,0)(1,0){3}{\line(0,1){1}}
\put(.5,.5){\makebox(0,0){\begin{small}$k$\end{small}}}
\put(1.5,.5){\makebox(0,0){\begin{small}$j$\end{small}}}\end{picture}
such that $k>j$.  However the subfilling $T^{(k)}$ does not have $k$ in the rightmost box of this row, so $T$ does not satisfy the Subfilling Property.  Hence $(a)$ implies $(b)$.  For the converse, suppose $T$ does not satisfy the Subfilling Property.  Then there exists a number $i$ such that $i$ is not in the far-right box of some nonzero row in $T^{(i)}$.  Thus there is some $k$ in this row that is smaller and to the right of $i$ so $T$ is not row-strict.  Hence $(b)$ implies $(a)$.
\end{proof}

\begin{lemma}\label{lem:dim_order_filling_gives_these_many_dimpairs} Let $\rho = (\rho_1,\rho_2,\ldots,\rho_s)$ be a composition of $n$.  Suppose that $r$ of the $s$ entries $\rho_i$ are nonzero.  We claim:
\begin{enumerate}[(a)]
\item There exist exactly $r$ positions where $n$ can be placed in a row-strict composition.
\item Let $T$ be a row-strict filling of $\rho$.  If $n$ is placed in the box of $T$ with dimension-ordering $i$ in $\{1,\ldots,r\}$, then $n$ is in a dimension pair with exactly $i-1$ other numbers; that is, $|\DP^T_n| = i-1$.
\end{enumerate}
\end{lemma}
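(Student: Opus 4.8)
\medskip

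The plan is to handle both parts through a single observation: in a row-strict filling, the entry $n$ must occupy the rightmost box of whatever row contains it. Indeed, $n$ is the largest value, and since each row increases strictly from left to right, the largest entry of a row sits at its right end; so $n$ lands in the far-right box of some nonempty row. This immediately gives part (a): the only candidate positions are the far-right boxes of the $r$ nonempty rows, and each of them is realized --- after placing $n$ in a chosen far-right box, fill the remaining $n-1$ boxes with $1,2,\dots,n-1$ in reading order (left to right along each row, then top to bottom), which is row-strict and leaves $n$ where we put it. So there are exactly $r$ such positions, and, by Definition~\ref{def:dimension_ordering}, these are precisely the boxes receiving a dimension-ordering label in $\{1,\dots,r\}$.

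For part (b), write $B$ for the box holding $n$; say it sits in column $p$ and carries dimension-ordering label $i$. I first cut down the possible dimension pairs $(a,n)$. Condition (1) of Definition~\ref{def:dimension_pair_(a,b)} is automatic since $n$ is maximal. For condition (3), recall $h=(1,2,\dots,n)$: if the box of $a$ has a right neighbor with entry $c$, then we would need $n \le h(c) = c$, forcing $c=n$; but then that right neighbor is $B$ itself, which would put $n$ in the same row as $a$ and immediately to its right, contradicting condition (2) (which demands $n$ be strictly below $a$ in the same column, or strictly to the left of $a$'s column). Hence $(a,n)$ can be a dimension pair only when the box of $a$ has no right neighbor --- that is, only when it is a far-right box, hence one of the boxes carrying a dimension-ordering label.

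It remains to count these. For a far-right box of $a$ (different from $B$), condition (3) is vacuous, so $(a,n)$ is a dimension pair exactly when condition (2) holds: the box of $a$ lies in a column strictly to the right of $p$, or in column $p$ and strictly above $B$. On the other hand, the dimension-ordering processes far-right boxes from the rightmost column to the leftmost, and top to bottom within a column; so the number of far-right boxes with label $<i$ is exactly the number of far-right boxes in columns to the right of $p$ plus the number of far-right boxes in column $p$ above $B$. These two counts agree, the first being $|\DP^T_n|$ and the second being $i-1$. Therefore $|\DP^T_n| = i-1$.

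I expect the only delicate point to be the translation in the last step: one must check that the relation ``$n$ is below $a$ in the same column, or $n$ is strictly to the left of $a$'' --- with $n$ pinned at $B$ and $a$ ranging over far-right boxes --- is literally the ``earlier in the dimension-ordering'' relation, with no accidental swap of left/right or above/below, and that the edge case in which a candidate $a$ has $B$ as its right neighbor is correctly excluded by condition (2). Once this dictionary is fixed, both parts follow with no further computation.
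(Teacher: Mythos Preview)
Your argument is correct and follows essentially the same route as the paper's proof: both use that $h$ is the identity to show condition~(3) of Definition~\ref{def:dimension_pair_(a,b)} eliminates every box with a right neighbor (except possibly the one whose neighbor is $n$ itself, which condition~(2) then kills), leaving precisely the far-right boxes preceding $B$ in the dimension-ordering. The only organizational difference is that the paper partitions the diagram into regions (left/below versus right/above with or without a right neighbor) and rules each out in turn, whereas you first reduce to far-right boxes via condition~(3) and then count via condition~(2); the content is the same.
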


\begin{proof}
Suppose $\rho = (\rho_1,\rho_2,\ldots,\rho_s)$ is a composition of $n$ where $r$ of the $s$ entries are nonzero.  Claim $(a)$ follows by the definition of row-strict and the fact that $n$ is the largest number in any filling of $\rho\vDash n$.  To illustrate the proof of $(b)$, consider the following schematic for $\rho$:

\medskip
\begin{center}
\setlength{\unitlength}{.1in}
\begin{picture}(7,10)(0,0)
\linethickness{.25pt}
\put(0,0){\line(1,0){7}}
\put(0,3){\line(1,0){6}}
\put(0,4){\line(1,0){3}}
\put(0,10){\line(1,0){5}}
\put(3,8){\line(1,0){2}}
\put(0,3){\line(0,-1){3}}
\put(6,3){\line(0,-1){1}}
\put(6,2){\line(1,0){1}}
\put(7,2){\line(0,-1){2}}
\put(0,10){\line(0,-1){6}}
\put(5,10){\line(0,-1){2}}
\put(3,8){\line(0,-1){4}}
\put(6.5,1.5){\makebox(0,0){\begin{scriptsize}1\end{scriptsize}}}
\put(6.5,.5){\makebox(0,0){\begin{scriptsize}2\end{scriptsize}}}
\put(5.5,2.5){\makebox(0,0){\begin{scriptsize}3\end{scriptsize}}}
\put(4.5,9.5){\makebox(0,0){\begin{scriptsize}4\end{scriptsize}}}
\put(4.5,8.5){\makebox(0,0){\begin{scriptsize}5\end{scriptsize}}}
\put(2.5,7.5){\makebox(0,0){\begin{scriptsize}6\end{scriptsize}}}
\put(2.5,6.5){\makebox(0,0){\begin{scriptsize}\vdots\end{scriptsize}}}
\put(2.5,4.4){\makebox(0,0){\begin{footnotesize}$r$\end{footnotesize}}}
\put(-2,3.5){\makebox(0,0){$\rho:=$}}
\end{picture}~.
\end{center}

\noindent Enumerate the far-right boxes of each nonempty row so that they are dimension-ordered as in the schematic above.  Let $T$ be a row-strict filling of $\rho$.  Suppose $n$ lies in the box with dimension-ordering $i \in \{1,\ldots,r\}$.  It suffices to count the number of dimension pairs with $n$, or simply $|\DP^T_n|$ since $n$ is the largest value in the filling.  Thus we want to count the distinct values $\beta$ such that $(\beta,n)\in \DP^T_n$.  We need not concern ourselves with boxes with values $\beta$ in the same column below or anywhere left of the $i^{th}$ dimension-ordered box for if such a $\beta$ had $(\beta,n)\in \DP^T_n$, then that would imply $\beta>n$ which is impossible (see $\bullet$-shaded boxes in figure below).  We also need not concern ourselves with any boxes that are in the same column above or anywhere to the right of the $i^{th}$ dimension-ordered box if it has a neighbor $j$ immediately right of it (see $\circ$-shaded boxes in figure below).

\medskip
\begin{center}
\setlength{\unitlength}{.1in}
\begin{picture}(7,10)(0,0)
\linethickness{.25pt}
\put(0,0){\line(1,0){7}}
\put(0,3){\line(1,0){6}}
\put(0,4){\line(1,0){3}}
\put(0,10){\line(1,0){5}}
\put(3,8){\line(1,0){2}}
\put(0,3){\line(0,-1){3}}
\put(6,3){\line(0,-1){1}}
\put(6,2){\line(1,0){1}}
\put(7,2){\line(0,-1){2}}
\put(0,10){\line(0,-1){6}}
\put(5,10){\line(0,-1){2}}
\put(3,8){\line(0,-1){4}}
\put(6.5,1.5){\makebox(0,0){\begin{scriptsize}1\end{scriptsize}}}
\put(6.5,.5){\makebox(0,0){\begin{scriptsize}2\end{scriptsize}}}
\put(5.5,2.5){\makebox(0,0){\begin{scriptsize}3\end{scriptsize}}}
\put(4.5,9.5){\makebox(0,0){\begin{scriptsize}4\end{scriptsize}}}
\put(4.5,8.5){\makebox(0,0){\begin{scriptsize}5\end{scriptsize}}}
\put(2.5,7.5){\makebox(0,0){\begin{scriptsize}6\end{scriptsize}}}
\multiput(2.3,6)(0,.45){2}{$\cdot$}
\put(2.5,5.5){\makebox(0,0){\begin{footnotesize}$i$\end{footnotesize}}}
\put(-2,3.5){\makebox(0,0){$\rho:=$}}
\multiput(.1,.15)(0,1){3}{$\bullet$}
\multiput(.1,4.15)(0,1){1}{$\bullet$}
\multiput(.3,5.15)(0,.55){3}{$\cdot$}
\multiput(.1,7.15)(0,1){3}{$\bullet$}
\multiput(1.1,.15)(0,1){3}{$\bullet$}
\multiput(1.1,4.15)(0,1){1}{$\bullet$}
\multiput(1.3,5.15)(0,.55){3}{$\cdot$}
\multiput(1.1,7.15)(0,1){3}{$\bullet$}
\multiput(2.1,.15)(0,1){3}{$\bullet$}
\multiput(2.1,4.15)(0,1){1}{$\bullet$}
\multiput(3.1,.15)(0,1){3}{$\circ$}
\multiput(4.1,.15)(0,1){3}{$\circ$}
\multiput(3.1,8.15)(0,1){2}{$\circ$}
\multiput(2.1,8.15)(0,1){2}{$\circ$}
\multiput(5.1,.15)(0,1){2}{$\circ$}
\end{picture}~.
\end{center}

\noindent If $\beta$ were in such a box, then $(\beta,n)\in \DP^T_n$ would imply $n\leq h(j)$ which is impossible since $h(j)=j$ and $j<n$.  That leaves exactly the $i-1$ boxes which are dimension-ordered boxes that are in the same column above $n$ or anywhere to the right of $n$, each of which are by definition in $\DP^T_n$.  Hence $|\DP^T_n| = i-1$ and $(b)$ is shown.
\end{proof}

\begin{lemma}\label{lem:CompositionDP_equals_TabloidDP}
Suppose $T$ is a row-strict filling of a composition $\rho$ of $n$.  If $i \in \{1,\ldots,n\}$, then $|\DP^{T^{(i)}}_i| = |\D_i|$.
\end{lemma}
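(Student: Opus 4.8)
The plan is to prove the sharper statement that $\D_i$ and $\DP^{T^{(i)}}_i$ are in fact the \emph{same} set of pairs; equality of cardinalities then follows at once. The crucial observation is that forming the subfilling $T^{(i)}$ (Definition~\ref{def:subfilling}) merely deletes the cells carrying the values $i+1,\ldots,n$ and moves no other cell, so every value $\leqslant i$---in particular $x$ and $i$ themselves---occupies in $\rho^{(i)}$ exactly the cell it occupied in $\rho$.

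Granting this, I would first note that conditions (1) and (2) of Definition~\ref{def:dimension_pair_(a,b)} for a pair $(x,i)$ with $x<i$ are literally the same assertion whether read in $T$ or in $T^{(i)}$: they concern only the inequality $i>x$ and the position of the cell containing $i$ relative to the cell containing $x$ (below it in the same column, or strictly to its left), and none of this changes under deletion. All the content is in condition (3). Here I would fix $x<i$ and assume the cell containing $x$ has a cell immediately to its right in $\rho$ (if not, condition (3) is vacuous for $(x,i)$ in both fillings and there is nothing to prove); let $c$ be the entry occupying that right-hand cell in $T$. If $c\leqslant i$, then that cell together with its entry survives into $T^{(i)}$, so condition (3) reads ``$i\leqslant h(c)$'' in both $T$ and $T^{(i)}$ and hence holds in one exactly when it holds in the other. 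If $c>i$, then that cell is deleted on passing to $T^{(i)}$, so $x$ has no right neighbour there and condition (3) becomes vacuous in $T^{(i)}$; on the other hand in $T$ we have $h(c)=c>i$, so condition (3) is satisfied in $T$ as well. Either way, $(x,i)$ satisfies condition (3) in $T$ if and only if it does in $T^{(i)}$.

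Putting the three conditions together, $(x,i)\in\D$ precisely when $(x,i)\in\DP^{T^{(i)}}$. Since every member of $\D_i$, and likewise of $\DP^{T^{(i)}}_i$, has the form $(x,i)$ with $x<i$ by condition (1), the two sets coincide, and in particular $|\DP^{T^{(i)}}_i|=|\D_i|$. The only step that demands genuine care is the bookkeeping in condition (3): one must be sure that deleting the cell directly to the right of $x$ truly leaves $x$ with no right neighbour at all, which is exactly why ``adjacent'' appears in Definition~\ref{def:dimension_pair_(a,b)}---it is what makes the two pictures line up. I would also remark that row-strictness of $T$ is not actually used in this count; it is needed elsewhere, via Lemma~\ref{lem:dim_ordered_filling_implies_row_strict}, only to guarantee that each $\rho^{(i)}$ is again a composition.
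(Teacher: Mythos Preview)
Your proof is correct and follows essentially the same route as the paper's: both arguments show that the dimension pairs of the form $(x,i)$ are unchanged when passing between $T$ and $T^{(i)}$, with the only nontrivial check being condition~(3). The paper phrases this as two inclusions (pairs in $T^{(i)}$ remain valid upon restoring $i+1,\ldots,n$, and no new pairs are created), while you give a more careful case analysis on whether the right neighbour $c$ of $x$ satisfies $c\leq i$ or $c>i$; your explicit use of $h(c)=c$ in the latter case is exactly what the paper's terse ``no further pairs can be created'' is relying on in this Springer setting.
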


\begin{proof}
Consider the subfilling $T^{(i)}$.  All the existing pairs $(\beta,i) \in \DP^{T^{(i)}}_i$ will still be valid dimension pairs in $T$ if we restore the numbers $i+1,\ldots,n$ and their corresponding boxes.  Hence the inequality $|\DP^{T^{(i)}}_i| \leq |\D_i|$ holds.  However no further pairs $(\beta,i)$ with $\beta < i$ can be created by restoring numbers larger than $i$.  Thus we get equality.
\end{proof}

\begin{lemma}\label{lem:why_this_box_exists}
Fix a partition $\mu$ of $n$.  Let $T$ be a tableau in $M^{\mu}$.  Suppose $\Phi(T)=\X$.  For each $i\in\{2,\ldots,n\}$, consider the subdiagram $\mu^{(i)}$ of $\mu$ corresponding to the subfilling $T^{(i)}$ of $T$.  Then each $\mu^{(i)}$ has at least $\alpha_i+1$ nonzero rows where $\alpha_i$ is the exponent of $x_i$ in the monomial $\X$.
\end{lemma}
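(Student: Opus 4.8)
The plan is to chain together the three preceding lemmas, keeping careful track of which object plays the role of ``$n$'' and which plays the role of ``$\rho$''. First I would record the preliminary structural facts. Since $T$ lies in $M^{\mu}$, it is a row-strict filling of $\mu$; hence by Lemma~\ref{lem:dim_ordered_filling_implies_row_strict} it satisfies the Subfilling Property, and therefore (by the discussion preceding that property) each subdiagram $\mu^{(i)}$ is again a genuine composition, of which the subfilling $T^{(i)}$ is a row-strict filling whose entries are exactly $1,\dots,i$ and whose largest entry is $i$. The case $i=1$ is trivial, since $\mu^{(1)}$ is a single box and $\alpha_1=0$, so I may assume $i\geq 2$.

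Next I would translate the exponent $\alpha_i$ into the language of dimension pairs. By the definition of $\Phi$, the variable $x_i$ contributes to $\X=\Phi(T)$ exactly once for each dimension pair of $T$ of the form $(\beta,i)$; that is, $\alpha_i=|\D_i|$. Applying Lemma~\ref{lem:CompositionDP_equals_TabloidDP} with this value of $i$ then gives $\alpha_i=|\D_i|=|\DP^{T^{(i)}}_i|$, so it suffices to bound the number of dimension pairs $(\beta,i)$ occurring inside the subfilling $T^{(i)}$.

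I would then invoke Lemma~\ref{lem:dim_order_filling_gives_these_many_dimpairs} applied to the row-strict filling $T^{(i)}$ of the composition $\mu^{(i)}$ (which here plays the role of ``$\rho$'', with ``$n$'' being the largest entry $i$): if $r_i$ denotes the number of nonzero rows of $\mu^{(i)}$, then $i$ occupies the box whose dimension-ordering is some $k\in\{1,\dots,r_i\}$, and part (b) yields $|\DP^{T^{(i)}}_i|=k-1$. Combining this with the previous display gives $\alpha_i=k-1$, hence $\alpha_i+1=k\leq r_i$, which is precisely the assertion that $\mu^{(i)}$ has at least $\alpha_i+1$ nonzero rows.

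The argument is essentially pure bookkeeping, so I do not anticipate a serious obstacle; the one point that genuinely needs care is the preliminary step, namely verifying that deleting the entries $i+1,\dots,n$ from the row-strict tableau $T$ leaves a bona fide composition (no internal gaps within rows), so that Lemmas~\ref{lem:dim_order_filling_gives_these_many_dimpairs} and~\ref{lem:CompositionDP_equals_TabloidDP} actually apply to $T^{(i)}$. This holds because in a strictly increasing row the entries that are $\leq i$ form a left-justified prefix, equivalently because $T$ satisfies the Subfilling Property by Lemma~\ref{lem:dim_ordered_filling_implies_row_strict}.
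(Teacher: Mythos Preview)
Your argument is correct and follows essentially the same route as the paper: both use Lemma~\ref{lem:CompositionDP_equals_TabloidDP} to identify $\alpha_i=|\D_i|=|\DP^{T^{(i)}}_i|$ and then Lemma~\ref{lem:dim_order_filling_gives_these_many_dimpairs} to bound $|\DP^{T^{(i)}}_i|$ by $r_i-1$. The only differences are cosmetic---you phrase it directly while the paper uses contradiction, and you are more explicit in checking that $\mu^{(i)}$ is a genuine composition so the lemmas apply.
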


\begin{proof}
Fix a partition $\mu$ of $n$.  Let $T\in M^{\mu}$ and $\X\in\A$ be $\Phi(T)$.  By Remark~\ref{rem:aBa-monoms_have_no_1}, $\X$ is of the form $x_2^{\alpha_2}\cdots x_n^{\alpha_n}$.  Suppose that the claim does not hold.  Then there is some $i\in\{2,\ldots,n\}$ for which $\mu^{(i)}$ has $r$ nonzero rows and $r<\alpha_i+1$.  Lemma~\ref{lem:dim_order_filling_gives_these_many_dimpairs} implies the number of dimension pairs in $\DP^{T^{(i)}}_i$ is at most $r-1$.  Thus $|\DP^{T^{(i)}}_i| \leq r-1 < \alpha_i$.  Since $|\DP^{T^{(i)}}_i| = |\D_i|$ by Lemma~\ref{lem:CompositionDP_equals_TabloidDP}, it follows that $|\D_i|<\alpha_i$, contradicting the fact that variable $x_i$ has exponent $\alpha_i$.
\end{proof}

\begin{theorem}[A map from $\A$ to $\JT$-fillings]\label{thm:GP_to_Tab}
Given a partition $\mu$ of $n$, there exists a well-defined dimension-preserving map $\Psi$ from the monomials $\A$ to the set of row-strict tableaux in $M^\mu$.  That is, $\Psi$ maps degree-$r$ monomials in $\A$ to $r$-dimensional $\JT$-fillings in $M^\mu$.  Moreover the composition $\A \stackrel{\Psi}{\longrightarrow} M^\mu \stackrel{\Phi}{\longrightarrow} \A$ is the identity.
\end{theorem}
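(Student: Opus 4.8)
The plan is to define $\Psi$ by an explicit ``fill in reverse order'' recipe and then prove it works via an auxiliary downward induction that ties the recipe to a preimage guaranteed to exist. Given $\X = x_2^{\alpha_2}\cdots x_n^{\alpha_n}\in\A$ --- which carries no $x_1$ by Remark~\ref{rem:aBa-monoms_have_no_1}, so we may set $\alpha_1:=0$ --- I would build a filling $T$ of $\mu$ by inserting $n, n-1, \dots, 1$ one at a time. Start with the composition $\rho^{(n)}:=\mu$. Having produced the composition $\rho^{(i)}$ on $i$ boxes, impose the dimension-ordering of Definition~\ref{def:dimension_ordering}, insert the entry $i$ into the box labelled $\alpha_i+1$, and let $\rho^{(i-1)}$ be the diagram obtained by deleting that box. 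Since dimension-ordering only ever labels far-right boxes, each row of $\rho^{(i-1)}$ is again a prefix, so $\rho^{(i-1)}\vDash i-1$. Finish by placing $1$ in the unique box of $\rho^{(1)}$, and set $\Psi(\X):=T$. The claims to verify are: (i) the recipe never stalls, i.e.\ the box labelled $\alpha_i+1$ exists at every stage; (ii) $\Psi(\X)$ is row-strict, hence lies in $M^\mu$; (iii) $\Phi(\Psi(\X))=\X$; (iv) $\Psi$ sends degree-$r$ monomials to $r$-dimensional fillings.

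The crux is (i), and together with (iii) it comes out cleanly once the recipe is compared with an existing preimage. Since $\X\in\A=\operatorname{image}\Phi$, fix a row-strict $T_0\in M^\mu$ with $\Phi(T_0)=\X$. Because $T_0$ has strictly increasing rows, the entries of any row exceeding a fixed value $i$ form a suffix of that row, so deleting them leaves a prefix; hence every subdiagram $\mu^{(i)}$ of $T_0$ is a genuine composition and every subfilling $T_0^{(i)}$ is a row-strict filling of it whose largest entry is $i$. I would then prove by downward induction on $i$ from $n$ to $1$: the recipe's step $i$ is well-defined, $\rho^{(i)}$ equals the subdiagram $\mu^{(i)}$ of $T_0$, and the box the recipe chooses for $i$ is the box $T_0$ places $i$ in. In the inductive step, the hypothesis gives $\rho^{(i)}=\mu^{(i)}$. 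By Lemma~\ref{lem:CompositionDP_equals_TabloidDP}, $|\DP^{T_0^{(i)}}_i|$ equals the number of dimension pairs $(\ast,i)$ in $T_0$, which is the exponent of $x_i$ in $\Phi(T_0)=\X$, namely $\alpha_i$. By the Subfilling Property (Lemma~\ref{lem:dim_ordered_filling_implies_row_strict}) the entry $i$ occupies a far-right box of $T_0^{(i)}$, so it has a dimension-ordering label $j$; applying Lemma~\ref{lem:dim_order_filling_gives_these_many_dimpairs}(b) to the row-strict filling $T_0^{(i)}$ of the composition $\mu^{(i)}$ (with $i$ in the role of the largest entry) gives $j-1=|\DP^{T_0^{(i)}}_i|=\alpha_i$, so $j=\alpha_i+1$. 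That box exists --- it contains $i$ in $T_0$ --- so the recipe's step $i$ is legitimate and picks exactly it; deleting it turns $\rho^{(i)}=\mu^{(i)}$ into $\mu^{(i-1)}$, closing the induction. (One may instead quote Lemma~\ref{lem:why_this_box_exists} for the existence of the box.)

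The induction shows $\Psi(\X)=T_0$. Since the recipe never refers to $T_0$, this proves $\Psi$ is well-defined and that $\Psi(\X)$ is a row-strict tableau in $M^\mu$, giving (i) and (ii); it also shows $\Phi$ is injective on row-strict tableaux, hence a bijection onto $\A$. Claim (iii) is then immediate, $\Phi(\Psi(\X))=\Phi(T_0)=\X$, so the composite $\A\stackrel{\Psi}{\longrightarrow}M^\mu\stackrel{\Phi}{\longrightarrow}\A$ is the identity. For (iv), $\Phi$ is degree-preserving by Theorem~\ref{thm:Tab_to_GP}, so $\Psi(\X)$ has dimension $\deg\X$; equivalently $\sum_i\alpha_i=\sum_i|\DP^{\Psi(\X)}_i|$. (Alternatively, (ii) can be seen directly: the recipe places each $i$ in the rightmost box of some row of $T^{(i)}$, so $T$ satisfies the Subfilling Property, and Lemma~\ref{lem:dim_ordered_filling_implies_row_strict} then gives $T\in M^\mu$.)

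I expect the main obstacle to be precisely the well-definedness (i): ensuring the dimension-ordered box we want is present at every stage. Two points require care. First, the supporting lemmas are phrased for \emph{compositions}, so one must first note that for a row-strict $T_0$ every subdiagram $\mu^{(i)}$ really is a composition, unlike for a general filling. Second, Lemma~\ref{lem:dim_order_filling_gives_these_many_dimpairs}(b) is stated for the top entry $n$ of a composition of $n$, so it must be invoked on $T_0^{(i)}$ after relabelling its largest entry $i$ into the role of ``$n$''. The remaining verifications are routine bookkeeping.
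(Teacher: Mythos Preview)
Your proof is correct and uses the same placement algorithm as the paper. The verification, however, is organized differently and is in fact sharper. The paper cites Lemma~\ref{lem:why_this_box_exists} to guarantee the target box exists at each step, then separately invokes Lemmas~\ref{lem:dim_order_filling_gives_these_many_dimpairs} and~\ref{lem:CompositionDP_equals_TabloidDP} to check $\Phi(T)=\X$ for the constructed $T$; it explicitly does \emph{not} claim $T$ coincides with the chosen preimage $T'$. Your downward induction ties the recipe step-by-step to an arbitrary preimage $T_0$, so existence of the box and $\Phi\circ\Psi=\mathrm{id}$ fall out simultaneously, and you get the bonus conclusion $\Psi(\X)=T_0$, i.e.\ $\Phi$ is injective. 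The paper obtains that injectivity only later, via the counting argument of Theorem~\ref{thm:cool_modified_tree_application} and Corollary~\ref{cor:A_and_M_isom_vect_spaces}. Your care in noting that Lemma~\ref{lem:dim_order_filling_gives_these_many_dimpairs}(b) must be applied to $T_0^{(i)}$ with $i$ in the role of the largest entry, and that $\mu^{(i)}$ is a composition because $T_0$ is row-strict, is exactly right.
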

\begin{proof}
Fix a partition $\mu = (\mu_1,\mu_2,\ldots,\mu_k)$ of $n$.  Let $\X$ be a degree-$r$ monomial in $\A$.  Remark~\ref{rem:aBa-monoms_have_no_1} reminds us that $\X$ is of the form $x_2^{\alpha_2} \cdots x_n^{\alpha_n}$.  The goal is to construct a map $\Psi$ from $\A$ to $M^\mu$ such that $\Psi(\X)$ is an $r$-dimensional tableau in $M^\mu$ and $(\Phi \circ \Psi)(\X)=\X$.  Recall $\A$ is the image of $M^{\mu}$ under $\Phi$ so we know there exists some tableau $T'\in M^{\mu}$ with $|\DP^{T'}_i| = \alpha_i$ for each $i\in\{2,\ldots,n\}$.

We now construct a filling $T$ (not a priori the same as $T'$) by giving $\mu$ a precise row-strict filling to be described next.  To construct $T$ we iterate the algorithm below with a triple-datum of the form $(\mu^{(i)}, \;i, \;x_i^{\alpha_i})$ of a composition $\mu^{(i)}$ of $i$, an integer $i$, and the $x_i^{\alpha_i}$-part of $\X$.  Start with $i=n$ in which case $\mu^{(n)}$ is $\mu$ itself; then decrease $i$ by one each time and repeat the steps below with the new triple-datum.  The algorithm is as follows:

\begin{enumerate}
\itemsep -.5ex
\item Input the triple-datum.
\item Impose the dimension-ordering on the rightmost boxes of $\mu^{(i)}$.
\item Place $i$ in the box with dimension-order $\alpha_i+1$.
\item If $i\geq2$, then remove the box with the entry $i$ to get a new subdiagram $\mu^{(i-1)}$.  Pass the new triple-datum $(\mu^{(i-1)}, \;i-1, \;x_{i-1}^{\alpha_{i-1}})$ to Step 1.
\item If $i=1$, then the final number 1 is forced in the last remaining box.  Replace all $n-1$ removed numbers and call this tableau $T$.
\end{enumerate}

We confirm that this algorithm is well-defined and produces a tableau in $M^{\mu}$.  Step 3 can be performed because Lemma~\ref{lem:why_this_box_exists} ensures the box exists.  The Subfilling Property ensures that the subdiagram at Step 4 is indeed a composition.  By Lemma~\ref{lem:dim_ordered_filling_implies_row_strict}, $T$ is row-strict and hence lies in $M^{\mu}$.

We are left to show $\Phi$ maps $T$ to the original $\X\in\A$ from which we started.  It suffices to check that if the exponent of $x_i$ in $\X$ is $\alpha_i$, then $|\DP^T_i| = \alpha_i$ for each $i\in\{2,\ldots,n\}$.  By Lemma~\ref{lem:dim_order_filling_gives_these_many_dimpairs}, when $i=n$ we know $|\DP^T_n| = \alpha_n$.  At each iteration after this initial step, we remove one more box from $\mu$.  At step $i=m$ for $m<n$, we placed $m$ into $\mu^{(m)}$ in the box with dimension-order $\alpha_m+1$.  Hence $|\DP^{T^{(m)}}_m| = \alpha_m$ by Lemma~\ref{lem:dim_order_filling_gives_these_many_dimpairs}.  But $|\DP^{T^{(m)}}_m| = |\D_m|$ by Lemma~\ref{lem:CompositionDP_equals_TabloidDP}.  Thus $|\D_m| = \alpha_m$ as desired.  Hence given the monomial $\X \in \A$, we see by construction of $T=\Psi(\X)$ that $T$ has the desired dimension pairs to map back to $\X$ via the map $\Phi$.  That is, the composition $\Phi\circ\Psi$ is the identity on $\A$.
\end{proof}

\begin{example}\label{exam:inverse_map_for_three_row}
Let $\mu=(2,2,2)$ and consider the monomial $x_3 x_4^2 x_5 x_6$ from Example~\ref{exam:tabloid_to_monomial}.  We show that this monomial will map to the filling

$$\setlength{\unitlength}{.15in}
\begin{picture}(2,3)(0,0)
\linethickness{.25pt}
\put(0,0){\line(1,0){2}}
\put(0,1){\line(1,0){2}}
\put(0,2){\line(1,0){2}}
\put(0,3){\line(1,0){2}}
\put(0,3){\line(0,-1){3}}
\put(1,3){\line(0,-1){3}}
\put(2,3){\line(0,-1){3}}
\put(.5,.5){\makebox(0,0){4}}
\put(1.5,.5){\makebox(0,0){5}}
\put(.5,1.5){\makebox(0,0){3}}
\put(1.5,1.5){\makebox(0,0){6}}
\put(.5,2.5){\makebox(0,0){1}}
\put(1.5,2.5){\makebox(0,0){2}}
\end{picture}$$

\noindent which we showed in Example~\ref{exam:tabloid_to_monomial} maps to the monomial $x_3 x_4^2 x_5 x_6$ under $\Phi$.  For clarity in the following flowchart below, we label the dimension-ordered boxes at each stage in small font with letters $a,b,c$ to mean 1st, 2nd, 3rd dimension-ordered boxes respectively.  Place 6 in the second dimension-ordered box $b$ since the exponent of $x_6$ is 1.  Place 5 in the second dimension-ordered box $b$ since the exponent of $x_5$ is 1.  Place 4 in the third dimension-ordered box $c$ since the exponent of $x_4$ is 2.  And so on.

\begin{center}
\setlength{\unitlength}{.15in}
\begin{picture}(2,3)(0,0)
\linethickness{.25pt}
\put(0,0){\line(1,0){2}}
\put(0,1){\line(1,0){2}}
\put(0,2){\line(1,0){2}}
\put(0,3){\line(1,0){2}}
\put(0,3){\line(0,-1){3}}
\put(1,3){\line(0,-1){3}}
\put(2,3){\line(0,-1){3}}
\put(1.5,.5){\makebox(0,0){\begin{tiny}c\end{tiny}}}
\put(1.5,1.5){\makebox(0,0){\begin{tiny}b\end{tiny}}}
\put(1.5,2.5){\makebox(0,0){\begin{tiny}a\end{tiny}}}
\end{picture}
$\stackrel{6\mapsto b}{\Longrightarrow}$
\begin{picture}(2,3)(0,0)
\linethickness{.25pt}
\put(0,0){\line(1,0){2}}
\put(0,1){\line(1,0){2}}
\put(0,2){\line(1,0){2}}
\put(0,3){\line(1,0){2}}
\put(0,3){\line(0,-1){3}}
\put(1,3){\line(0,-1){3}}
\put(2,3){\line(0,-1){3}}
\put(1.5,1.5){\makebox(0,0){6}}
\put(1.5,.5){\makebox(0,0){\begin{tiny}b\end{tiny}}}
\put(.5,1.5){\makebox(0,0){\begin{tiny}c\end{tiny}}}
\put(1.5,2.5){\makebox(0,0){\begin{tiny}a\end{tiny}}}
\end{picture}
$\stackrel{5\mapsto b}{\Longrightarrow}$
\begin{picture}(2,3)(0,0)
\linethickness{.25pt}
\put(0,0){\line(1,0){2}}
\put(0,1){\line(1,0){2}}
\put(0,2){\line(1,0){2}}
\put(0,3){\line(1,0){2}}
\put(0,3){\line(0,-1){3}}
\put(1,3){\line(0,-1){3}}
\put(2,3){\line(0,-1){3}}
\put(1.5,.5){\makebox(0,0){5}}
\put(1.5,1.5){\makebox(0,0){6}}
\put(.5,1.5){\makebox(0,0){\begin{tiny}b\end{tiny}}}
\put(.5,.5){\makebox(0,0){\begin{tiny}c\end{tiny}}}
\put(1.5,2.5){\makebox(0,0){\begin{tiny}a\end{tiny}}}
\end{picture}
$\stackrel{4\mapsto c}{\Longrightarrow}$
\begin{picture}(2,3)(0,0)
\linethickness{.25pt}
\put(0,0){\line(1,0){2}}
\put(0,1){\line(1,0){2}}
\put(0,2){\line(1,0){2}}
\put(0,3){\line(1,0){2}}
\put(0,3){\line(0,-1){3}}
\put(1,3){\line(0,-1){3}}
\put(2,3){\line(0,-1){3}}
\put(.5,.5){\makebox(0,0){4}}
\put(1.5,.5){\makebox(0,0){5}}
\put(1.5,1.5){\makebox(0,0){6}}
\put(.5,1.5){\makebox(0,0){\begin{tiny}b\end{tiny}}}
\put(1.5,2.5){\makebox(0,0){\begin{tiny}a\end{tiny}}}
\end{picture}
$\stackrel{3\mapsto b}{\Longrightarrow}$
\begin{picture}(2,3)(0,0)
\linethickness{.25pt}
\put(0,0){\line(1,0){2}}
\put(0,1){\line(1,0){2}}
\put(0,2){\line(1,0){2}}
\put(0,3){\line(1,0){2}}
\put(0,3){\line(0,-1){3}}
\put(1,3){\line(0,-1){3}}
\put(2,3){\line(0,-1){3}}
\put(.5,.5){\makebox(0,0){4}}
\put(1.5,.5){\makebox(0,0){5}}
\put(.5,1.5){\makebox(0,0){3}}
\put(1.5,1.5){\makebox(0,0){6}}
\put(1.5,2.5){\makebox(0,0){\begin{tiny}a\end{tiny}}}
\end{picture}
$\stackrel{2\mapsto a}{\Longrightarrow}$
\begin{picture}(2,3)(0,0)
\linethickness{.25pt}
\put(0,0){\line(1,0){2}}
\put(0,1){\line(1,0){2}}
\put(0,2){\line(1,0){2}}
\put(0,3){\line(1,0){2}}
\put(0,3){\line(0,-1){3}}
\put(1,3){\line(0,-1){3}}
\put(2,3){\line(0,-1){3}}
\put(.5,.5){\makebox(0,0){4}}
\put(1.5,.5){\makebox(0,0){5}}
\put(.5,1.5){\makebox(0,0){3}}
\put(1.5,1.5){\makebox(0,0){6}}
\put(1.5,2.5){\makebox(0,0){2}}
\end{picture}
$\stackrel{1 \mbox{\begin{scriptsize} forced\end{scriptsize}}}{\Longrightarrow}$
\begin{picture}(2,3)(0,0)
\linethickness{.25pt}
\put(0,0){\line(1,0){2}}
\put(0,1){\line(1,0){2}}
\put(0,2){\line(1,0){2}}
\put(0,3){\line(1,0){2}}
\put(0,3){\line(0,-1){3}}
\put(1,3){\line(0,-1){3}}
\put(2,3){\line(0,-1){3}}
\put(.5,.5){\makebox(0,0){4}}
\put(1.5,.5){\makebox(0,0){5}}
\put(.5,1.5){\makebox(0,0){3}}
\put(1.5,1.5){\makebox(0,0){6}}
\put(.5,2.5){\makebox(0,0){1}}
\put(1.5,2.5){\makebox(0,0){2}}
\end{picture}~.\end{center}
\end{example}

\begin{remark}\label{rem:isom_vector_space_remark}
Since the composition $M^\mu \stackrel{\Phi}{\longrightarrow} \A \stackrel{\Psi}{\longrightarrow} M^\mu$ is the identity, it follows that $\A$ and $M^\mu$ are isomorphic as graded vector spaces.  This proof is a simple consequence of the fact that the monomials $\A$ coincide with the Garsia-Procesi basis $\B$.  We show this in the next subsection in Corollary~\ref{cor:A_and_M_isom_vect_spaces}.
\end{remark}

\subsection{\texorpdfstring{$\A$}{A(mu)} coincides with the Garsia-Procesi basis \texorpdfstring{$\B$}{B(mu)}}\label{subsec:A_equals_B_monomials}

Garsia and Procesi construct a tree~\cite[pg.87]{GP} that we call a \textit{GP-tree} to define their monomial basis $\B$.  In this subsection we modify this tree's construction to deliver more information.  For a given monomial $\X \in \B$, each path on the modified tree tells us how to construct a row-strict tableau $T$ such that $\Phi(T)$ equals $\X$.  In other words the paths on the tree give $\Psi$.  First we recall what Garsia and Procesi did.  Then we give an example that makes this algorithm more transparent.  Lastly we define our modification and give our specific results.

\begin{remark}
Although Garsia and Procesi's construction of a GP-tree mentions nothing of a dimension-ordering (recall Definition~\ref{def:dimension_ordering}), we find it clearer to explain the combinatorics of building their tree in Definition~\ref{def:tree_of_GP} using this concept.  They also use French-style Ferrers diagrams, but we will use the convention of having our tableaux flush top and left.
\end{remark}

\begin{definition}[GP-tree]\label{def:tree_of_GP}
If $\mu$ is a partition of $n$, then the \textit{GP-tree of $\mu$} is a tree with $n$ levels constructed as follows.  Let $\mu$ sit alone at the top Level $n$.  From a subdiagram $\mu^{(i)}$ at Level $i$, we branch down to exactly $r$ new subdiagrams at Level $i-1$ where $r$ equals the number of nonzero rows of $\mu^{(i)}$.  Note that this branching is \textit{injective}---that is, no two Level $i$ diagrams branch down to the same Level $i-1$ diagram.  Label these $r$ edges left to right with the labels $x_i^0, x_i^1, \ldots, x_i^{r-1}$.  Impose the dimension-ordering on $\mu^{(i)}$.  The subdiagram at the end of the edge labelled $x_i^j$ for some $j\in\{0,1,\ldots,r-1\}$ will be exactly $\mu^{(i)}$ with the box with dimension-ordering $j+1$ removed.  If a gap in a column is created by removing this box, then correct the gap by pushing up on this column to make a proper Young diagram instead of a composition.  At Level 1 there is a set of single box diagrams.  Instead of placing single boxes at this level, put the product of the edge labels from Level $n$ down to this vertex.  These monomials are the basis for $\B$~\cite[Theorem 3.1, pg.100]{GP}.
\end{definition}

\begin{example}[GP-tree for $\mu=(2,2)$]
Let $\mu=(2,2)$, which has shape\setlength{\unitlength}{.07in}
\begin{picture}(2,2)(0,0)
\linethickness{.25pt}
\put(0,0){\line(1,0){2}}
\put(0,1){\line(1,0){2}}
\put(0,2){\line(1,0){2}}
\put(0,2){\line(0,-1){2}}
\put(1,2){\line(0,-1){2}}
\put(2,2){\line(0,-1){2}}
\end{picture}~.  We start at the top Level 4 with the shape (2,2).  The first branching of the (2,2)-tree is
$$
\xymatrix{
& \begin{picture}(2,2)(0,0)
\linethickness{.25pt}
\put(0,0){\line(1,0){2}}
\put(0,1){\line(1,0){2}}
\put(0,2){\line(1,0){2}}
\put(0,2){\line(0,-1){2}}
\put(1,2){\line(0,-1){2}}
\put(2,2){\line(0,-1){2}}
\end{picture} \ar[dl]_{x_4^0} \ar[dr]^{x_4^1}\\
\begin{picture}(2,2)(0,0)
\linethickness{.25pt}
\put(0,0){\line(1,0){2}}
\put(0,1){\line(1,0){2}}
\put(0,2){\line(1,0){1}}
\put(0,2){\line(0,-1){2}}
\put(1,2){\line(0,-1){2}}
\put(2,1){\line(0,-1){1}}
\end{picture} & & \begin{picture}(2,2)(0,0)
\linethickness{.25pt}
\put(0,0){\line(1,0){1}}
\put(0,1){\line(1,0){2}}
\put(0,2){\line(1,0){2}}
\put(0,2){\line(0,-1){2}}
\put(1,2){\line(0,-1){2}}
\put(2,2){\line(0,-1){1}}
\end{picture}~.
}
$$
But we make the bottom-left non-standard diagram into a proper Young diagram by pushing the bottom-right box up the column.  In the Figure~\ref{fig:GP_Tree_Example}, we show the completed GP-tree.  Observe that the six monomials at Level 1 are the Garsia-Procesi basis $\B$.
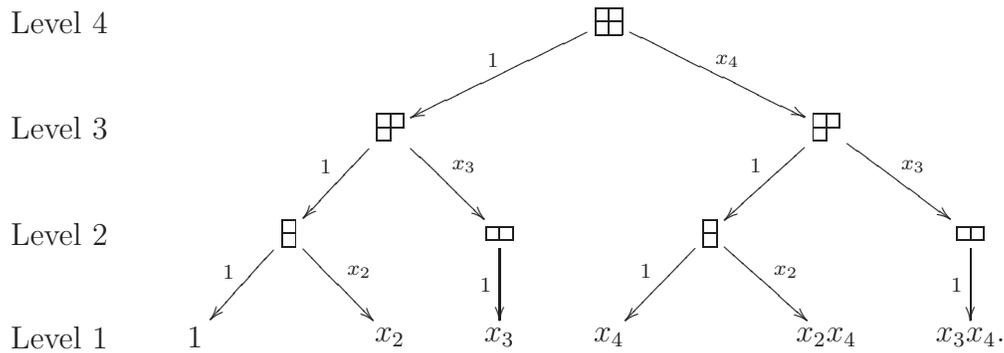
\begin{figure}[!ht]
$$
\xymatrix{
\mbox{Level 4} & & & & & \begin{picture}(2,2)(0,0)
\linethickness{.25pt}
\put(0,0){\line(1,0){2}}
\put(0,1){\line(1,0){2}}
\put(0,2){\line(1,0){2}}
\put(0,2){\line(0,-1){2}}
\put(1,2){\line(0,-1){2}}
\put(2,2){\line(0,-1){2}}
\end{picture} \ar[dll]_1 \ar[drr]^{x_4}\\
\mbox{Level 3} & & & \begin{picture}(2,2)(0,0)
\linethickness{.25pt}
\put(0,0){\line(1,0){1}}
\put(0,1){\line(1,0){2}}
\put(0,2){\line(1,0){2}}
\put(0,2){\line(0,-1){2}}
\put(1,2){\line(0,-1){2}}
\put(2,2){\line(0,-1){1}}
\end{picture} \ar[dl]_1 \ar[dr]^{x_3} & & & & \begin{picture}(2,2)(0,0)
\linethickness{.25pt}
\put(0,0){\line(1,0){1}}
\put(0,1){\line(1,0){2}}
\put(0,2){\line(1,0){2}}
\put(0,2){\line(0,-1){2}}
\put(1,2){\line(0,-1){2}}
\put(2,2){\line(0,-1){1}}
\end{picture} \ar[dl]_{1} \ar[dr]^{x_3}\\
\mbox{Level 2} & & \begin{picture}(1,2)(0,0)
\linethickness{.25pt}
\put(0,0){\line(1,0){1}}
\put(0,1){\line(1,0){1}}
\put(0,2){\line(1,0){1}}
\put(0,2){\line(0,-1){2}}
\put(1,2){\line(0,-1){2}}
\end{picture} \ar[dl]_1\ar[dr]^{x_2} & & \begin{picture}(2,1)(0,0)
\linethickness{.25pt}
\put(0,0){\line(1,0){2}}
\put(0,1){\line(1,0){2}}
\put(0,1){\line(0,-1){1}}
\put(1,1){\line(0,-1){1}}
\put(2,1){\line(0,-1){1}}
\end{picture} \ar[d]_{1} & & \begin{picture}(1,2)(0,0)
\linethickness{.25pt}
\put(0,0){\line(1,0){1}}
\put(0,1){\line(1,0){1}}
\put(0,2){\line(1,0){1}}
\put(0,2){\line(0,-1){2}}
\put(1,2){\line(0,-1){2}}
\end{picture} \ar[dl]_{1}\ar[dr]^{x_2} & & \begin{picture}(2,1)(0,0)
\linethickness{.25pt}
\put(0,0){\line(1,0){2}}
\put(0,1){\line(1,0){2}}
\put(0,1){\line(0,-1){1}}
\put(1,1){\line(0,-1){1}}
\put(2,1){\line(0,-1){1}}
\end{picture} \ar[d]_{1}\\
\mbox{Level 1} & 1 & & x_2 & x_3 & x_4 & & x_2x_4 & x_3x_4.
}$$
\caption{\label{fig:GP_Tree_Example}The GP-tree for $\mu=(2,2)$.}
\end{figure}
\end{example}
\vspace{-.1in}

\begin{remark}
Each time a subdiagram is altered to make it look like a proper Young diagram, we lose information that can be used to reconstruct a row-strict tableau in $M^{\mu}$ from a given monomial in $\B$.  The construction below will take this into account, and give the precise prescription for constructing a filling from a monomial in $\B$.
\end{remark}

\begin{definition}[Modified GP-tree]\label{def:Modified_GP_Tree}
Let $\mu$ be a partition of $n$.  The \textit{modified GP-tree for $\mu$} is a tree with $n+2$ levels.  The top is Level $n$ with diagram $\mu$ at its vertex.  The branching and edge labelling rules are the same as in the GP-tree.  The crucial modification from the GP-tree is the diagram at the end of a branching edge.
\begin{itemize}
 \item When branching down from Level $i$ down to Level $i-1$ for $i\geq1$, the new diagram at Level $i-1$ will be a composition $\mu^{(i-1)}$ of $i-1$ with a partial filling of the values $i,\ldots,n$ in the remaining $n-(i-1)$ boxes of $\mu$.  In the diagram at the end of the edge labelled $x_i^j$, instead of removing the box with dimension-ordering $j+1$ place the value $i$ in this box.
\end{itemize}
Place the label 1 on the edge from Level 0 down to its unique corresponding leaf at the bottom $(n+2)^{th}$ level, which we call Level B.  Label each leaf at Level B with the product of the edge labels on the path connecting the root vertex of the tree with this leaf.
\end{definition}

\begin{remark}
Observe that we never move a box as was done in the GP-tree to create a Young diagram from a composition.  There are now two sublevels below Level 1: Level 0 has a filling of $\mu$ constructed through this tree, and Level B has the monomials in $\B$ coming from the product of the edge labels on the paths.  Theorem~\ref{thm:Getting_(h,mu)-fillings_from_GP-tree} highlights a profound relationship between these two levels.
\end{remark}

\begin{example}
Again consider the shape $\mu=(2,2)$.  Dimension order the Level 4 diagram to get \setlength{\unitlength}{.1in}\begin{picture}(2,2)(0,0)
\linethickness{.25pt}
\put(0,0){\line(1,0){2}}\put(0,1){\line(1,0){2}}\put(0,2){\line(1,0){2}}
\put(0,2){\line(0,-1){2}}\put(1,2){\line(0,-1){2}}\put(2,2){\line(0,-1){2}}
\put(1.2,.1){\begin{scriptsize}$b$\end{scriptsize}}
\put(1.2,1.1){\begin{scriptsize}$a$\end{scriptsize}}
\end{picture}.  Branch downward left placing 4 in the dimension-ordered box we labelled $a$.  Branch downward right placing 4 in the dimension-ordered box we labelled $b$.  Ignoring the filled box, impose dimension-orderings on both compositions: on the left, of shape $(1,2)$; and on the right, of shape $(2,1)$.  This gives:

\setlength{\unitlength}{.1in}
$$\xymatrix{
&\begin{picture}(2,2)(0,0)
\linethickness{.25pt}
\put(0,0){\line(1,0){2}}\put(0,1){\line(1,0){2}}\put(0,2){\line(1,0){2}}
\put(0,2){\line(0,-1){2}}\put(1,2){\line(0,-1){2}}\put(2,2){\line(0,-1){2}}\end{picture} \ar[dl]_1 \ar[dr]^{x_4} \\
\begin{picture}(2,2)(0,0)
\linethickness{.25pt}
\put(0,0){\line(1,0){2}}\put(0,1){\line(1,0){2}}\put(0,2){\line(1,0){2}}
\put(0,2){\line(0,-1){2}}\put(1,2){\line(0,-1){2}}\put(2,2){\line(0,-1){2}}
\put(1.3,1.05){\begin{scriptsize}4\end{scriptsize}} \put(1.25,.2){\begin{scriptsize}$a$\end{scriptsize}}
\put(.3,1.1){\begin{scriptsize}$b$\end{scriptsize}}\end{picture} & & \begin{picture}(2,2)(0,0)
\linethickness{.25pt}
\put(0,0){\line(1,0){2}}\put(0,1){\line(1,0){2}}\put(0,2){\line(1,0){2}}
\put(0,2){\line(0,-1){2}}\put(1,2){\line(0,-1){2}}\put(2,2){\line(0,-1){2}}
\put(1.25,.1){\begin{scriptsize}4\end{scriptsize}}\put(1.25,1.2){\begin{scriptsize}$a$\end{scriptsize}}
\put(.3,.15){\begin{scriptsize}$b$\end{scriptsize}}\end{picture}~.
}$$
\medskip

In each of these subdiagrams branch down to the next Level by placing 3 in the appropriate dimension-ordered boxes.  The completed tree is given in Figure~\ref{fig:Modified_GP_Tree_Example}.

\begin{figure}[!ht]
\setlength{\unitlength}{.1in}
$$
\xymatrix{
\mbox{Level 4} & & & & & \begin{picture}(2,2)(0,0)
\linethickness{.25pt}
\put(0,0){\line(1,0){2}}
\put(0,1){\line(1,0){2}}
\put(0,2){\line(1,0){2}}
\put(0,2){\line(0,-1){2}}
\put(1,2){\line(0,-1){2}}
\put(2,2){\line(0,-1){2}}
\end{picture} \ar[dll]_1 \ar[drr]^{x_4}\\
\mbox{Level 3} & & & \begin{picture}(2,2)(0,0)
\linethickness{.25pt}
\put(0,0){\line(1,0){2}}\put(0,1){\line(1,0){2}}\put(0,2){\line(1,0){2}}
\put(0,2){\line(0,-1){2}}\put(1,2){\line(0,-1){2}}\put(2,2){\line(0,-1){2}}
\put(1.25,1.05){\begin{scriptsize}4\end{scriptsize}} \end{picture} \ar[dl]_1 \ar[dr]^{x_3} & & & & \begin{picture}(2,2)(0,0)
\linethickness{.25pt}
\put(0,0){\line(1,0){2}}\put(0,1){\line(1,0){2}}\put(0,2){\line(1,0){2}}
\put(0,2){\line(0,-1){2}}\put(1,2){\line(0,-1){2}}\put(2,2){\line(0,-1){2}}
\put(1.25,.05){\begin{scriptsize}4\end{scriptsize}} \end{picture} \ar[dl]_{1} \ar[dr]^{x_3}\\
\mbox{Level 2} & & \begin{picture}(2,2)(0,0)
\linethickness{.25pt}
\put(0,0){\line(1,0){2}}\put(0,1){\line(1,0){2}}\put(0,2){\line(1,0){2}}
\put(0,2){\line(0,-1){2}}\put(1,2){\line(0,-1){2}}\put(2,2){\line(0,-1){2}}
\put(1.25,1.05){\begin{scriptsize}4\end{scriptsize}} \put(1.25,.05){\begin{scriptsize}3\end{scriptsize}}\end{picture} \ar[dl]_1\ar[dr]^{x_2} & & \begin{picture}(2,2)(0,0)
\linethickness{.25pt}
\put(0,0){\line(1,0){2}}\put(0,1){\line(1,0){2}}\put(0,2){\line(1,0){2}}
\put(0,2){\line(0,-1){2}}\put(1,2){\line(0,-1){2}}\put(2,2){\line(0,-1){2}}
\put(1.25,1.05){\begin{scriptsize}4\end{scriptsize}} \put(.25,1.05){\begin{scriptsize}3\end{scriptsize}}\end{picture} \ar[d]_{1} & & \begin{picture}(2,2)(0,0)
\linethickness{.25pt}
\put(0,0){\line(1,0){2}}\put(0,1){\line(1,0){2}}\put(0,2){\line(1,0){2}}
\put(0,2){\line(0,-1){2}}\put(1,2){\line(0,-1){2}}\put(2,2){\line(0,-1){2}}
\put(1.25,.05){\begin{scriptsize}4\end{scriptsize}} \put(1.25,1.05){\begin{scriptsize}3\end{scriptsize}}\end{picture} \ar[dl]_{1}\ar[dr]^{x_2} & & \begin{picture}(2,2)(0,0)
\linethickness{.25pt}
\put(0,0){\line(1,0){2}}\put(0,1){\line(1,0){2}}\put(0,2){\line(1,0){2}}
\put(0,2){\line(0,-1){2}}\put(1,2){\line(0,-1){2}}\put(2,2){\line(0,-1){2}}
\put(1.25,.05){\begin{scriptsize}4\end{scriptsize}} \put(.25,.05){\begin{scriptsize}3\end{scriptsize}}\end{picture} \ar[d]_{1}\\
\mbox{Level 1} & \begin{picture}(2,2)(0,0)
\linethickness{.25pt}
\put(0,0){\line(1,0){2}}\put(0,1){\line(1,0){2}}\put(0,2){\line(1,0){2}}
\put(0,2){\line(0,-1){2}}\put(1,2){\line(0,-1){2}}\put(2,2){\line(0,-1){2}}
\put(1.25,1.05){\begin{scriptsize}4\end{scriptsize}} \put(1.25,.05){\begin{scriptsize}3\end{scriptsize}} \put(.25,1.05){\begin{scriptsize}2\end{scriptsize}}\end{picture}\ar[d]_1 & & \begin{picture}(2,2)(0,0)
\linethickness{.25pt}
\put(0,0){\line(1,0){2}}\put(0,1){\line(1,0){2}}\put(0,2){\line(1,0){2}}
\put(0,2){\line(0,-1){2}}\put(1,2){\line(0,-1){2}}\put(2,2){\line(0,-1){2}}
\put(1.25,1.05){\begin{scriptsize}4\end{scriptsize}} \put(1.25,.05){\begin{scriptsize}3\end{scriptsize}} \put(.25,.05){\begin{scriptsize}2\end{scriptsize}}\end{picture}\ar[d]_1 & \begin{picture}(2,2)(0,0)
\linethickness{.25pt}
\put(0,0){\line(1,0){2}}\put(0,1){\line(1,0){2}}\put(0,2){\line(1,0){2}}
\put(0,2){\line(0,-1){2}}\put(1,2){\line(0,-1){2}}\put(2,2){\line(0,-1){2}}
\put(1.25,1.05){\begin{scriptsize}4\end{scriptsize}} \put(.25,1.05){\begin{scriptsize}3\end{scriptsize}} \put(1.25,.05){\begin{scriptsize}2\end{scriptsize}}\end{picture}\ar[d]_1 & \begin{picture}(2,2)(0,0)
\linethickness{.25pt}
\put(0,0){\line(1,0){2}}\put(0,1){\line(1,0){2}}\put(0,2){\line(1,0){2}}
\put(0,2){\line(0,-1){2}}\put(1,2){\line(0,-1){2}}\put(2,2){\line(0,-1){2}}
\put(1.25,1.05){\begin{scriptsize}3\end{scriptsize}} \put(1.25,.05){\begin{scriptsize}4\end{scriptsize}} \put(.25,1.05){\begin{scriptsize}2\end{scriptsize}}\end{picture}\ar[d]_1 & & \begin{picture}(2,2)(0,0)
\linethickness{.25pt}
\put(0,0){\line(1,0){2}}\put(0,1){\line(1,0){2}}\put(0,2){\line(1,0){2}}
\put(0,2){\line(0,-1){2}}\put(1,2){\line(0,-1){2}}\put(2,2){\line(0,-1){2}}
\put(1.25,1.05){\begin{scriptsize}3\end{scriptsize}} \put(1.25,.05){\begin{scriptsize}4\end{scriptsize}} \put(.25,.05){\begin{scriptsize}2\end{scriptsize}}\end{picture}\ar[d]_1 & \begin{picture}(2,2)(0,0)
\linethickness{.25pt}
\put(0,0){\line(1,0){2}}\put(0,1){\line(1,0){2}}\put(0,2){\line(1,0){2}}
\put(0,2){\line(0,-1){2}}\put(1,2){\line(0,-1){2}}\put(2,2){\line(0,-1){2}}
\put(.25,.05){\begin{scriptsize}3\end{scriptsize}} \put(1.25,.05){\begin{scriptsize}4\end{scriptsize}} \put(1.25,1.05){\begin{scriptsize}2\end{scriptsize}}\end{picture}\ar[d]_1\\
\mbox{Level 0} & \begin{picture}(2,2)(0,0)
\linethickness{.25pt}
\put(0,0){\line(1,0){2}}\put(0,1){\line(1,0){2}}\put(0,2){\line(1,0){2}}
\put(0,2){\line(0,-1){2}}\put(1,2){\line(0,-1){2}}\put(2,2){\line(0,-1){2}}
\put(1.25,1.05){\begin{scriptsize}4\end{scriptsize}} \put(1.25,.05){\begin{scriptsize}3\end{scriptsize}} \put(.25,1.05){\begin{scriptsize}2\end{scriptsize}} \put(.25,.05){\begin{scriptsize}1\end{scriptsize}}\end{picture}\ar[d]_1 & & \begin{picture}(2,2)(0,0)
\linethickness{.25pt}
\put(0,0){\line(1,0){2}}\put(0,1){\line(1,0){2}}\put(0,2){\line(1,0){2}}
\put(0,2){\line(0,-1){2}}\put(1,2){\line(0,-1){2}}\put(2,2){\line(0,-1){2}}
\put(1.25,1.05){\begin{scriptsize}4\end{scriptsize}} \put(1.25,.05){\begin{scriptsize}3\end{scriptsize}} \put(.25,.05){\begin{scriptsize}2\end{scriptsize}} \put(.25,1.05){\begin{scriptsize}1\end{scriptsize}}\end{picture}\ar[d]_1 & \begin{picture}(2,2)(0,0)
\linethickness{.25pt}
\put(0,0){\line(1,0){2}}\put(0,1){\line(1,0){2}}\put(0,2){\line(1,0){2}}
\put(0,2){\line(0,-1){2}}\put(1,2){\line(0,-1){2}}\put(2,2){\line(0,-1){2}}
\put(1.25,1.05){\begin{scriptsize}4\end{scriptsize}} \put(.25,1.05){\begin{scriptsize}3\end{scriptsize}} \put(1.25,.05){\begin{scriptsize}2\end{scriptsize}} \put(.25,.05){\begin{scriptsize}1\end{scriptsize}}\end{picture}\ar[d]_1 & \begin{picture}(2,2)(0,0)
\linethickness{.25pt}
\put(0,0){\line(1,0){2}}\put(0,1){\line(1,0){2}}\put(0,2){\line(1,0){2}}
\put(0,2){\line(0,-1){2}}\put(1,2){\line(0,-1){2}}\put(2,2){\line(0,-1){2}}
\put(1.25,1.05){\begin{scriptsize}3\end{scriptsize}} \put(1.25,.05){\begin{scriptsize}4\end{scriptsize}} \put(.25,1.05){\begin{scriptsize}2\end{scriptsize}} \put(.25,.05){\begin{scriptsize}1\end{scriptsize}}\end{picture}\ar[d]_1 & & \begin{picture}(2,2)(0,0)
\linethickness{.25pt}
\put(0,0){\line(1,0){2}}\put(0,1){\line(1,0){2}}\put(0,2){\line(1,0){2}}
\put(0,2){\line(0,-1){2}}\put(1,2){\line(0,-1){2}}\put(2,2){\line(0,-1){2}}
\put(1.25,1.05){\begin{scriptsize}3\end{scriptsize}} \put(1.25,.05){\begin{scriptsize}4\end{scriptsize}} \put(.25,.05){\begin{scriptsize}2\end{scriptsize}} \put(.25,1.05){\begin{scriptsize}1\end{scriptsize}}\end{picture}\ar[d]_1 & \begin{picture}(2,2)(0,0)
\linethickness{.25pt}
\put(0,0){\line(1,0){2}}\put(0,1){\line(1,0){2}}\put(0,2){\line(1,0){2}}
\put(0,2){\line(0,-1){2}}\put(1,2){\line(0,-1){2}}\put(2,2){\line(0,-1){2}}
\put(.25,.05){\begin{scriptsize}3\end{scriptsize}} \put(1.25,.05){\begin{scriptsize}4\end{scriptsize}} \put(1.25,1.05){\begin{scriptsize}2\end{scriptsize}} \put(.25,1.05){\begin{scriptsize}1\end{scriptsize}}\end{picture}\ar[d]_1\\
\mbox{Level B} & 1 & & x_2 & x_3 & x_4 & & x_2x_4 & x_3x_4.
}$$
\vspace{-.1in}
\caption{\label{fig:Modified_GP_Tree_Example}The modified GP-tree for $\mu=(2,2)$.}
\end{figure}
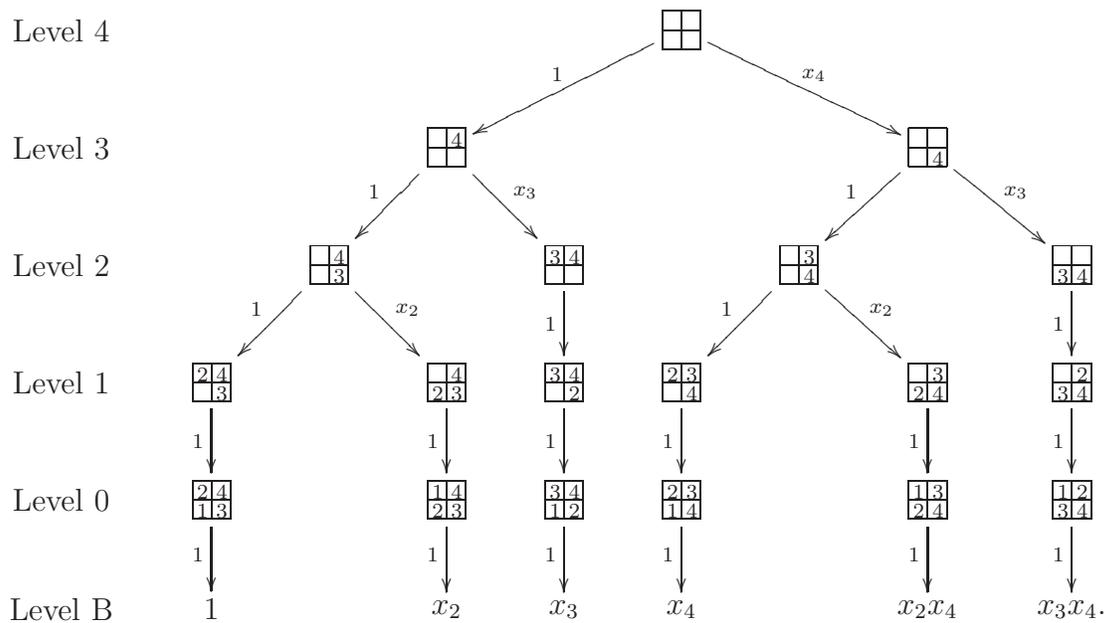
\end{example}

\begin{theorem}\label{thm:Getting_(h,mu)-fillings_from_GP-tree}
Let $\mu$ be a Young diagram and consider its corresponding modified GP-tree.  Each of the fillings at Level 0 are $\JT$-fillings.  Moreover given a filling $T$, the image $\Phi(T)$ will be the monomial $\X \in \B$ that is the neighbor of $T$ in Level B.
\end{theorem}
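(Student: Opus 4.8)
The plan is to show both assertions by tracking, along a single root-to-leaf path of the modified GP-tree, the same data that the algorithm $\Psi$ in Theorem~\ref{thm:GP_to_Tab} tracks. The key observation is that the modified GP-tree is nothing more than a branching record of all possible runs of the $\Psi$-algorithm: at Level $i$ the vertex carries a composition $\mu^{(i)}$ together with a partial filling of the values $i+1,\dots,n$, exactly the intermediate object produced after the first $n-i$ steps of the algorithm, and branching down the edge labelled $x_i^{j}$ records the choice ``place $i$ in the box with dimension-ordering $j+1$.'' So I would first argue that the partial filling sitting at a Level-$i$ vertex always satisfies the Subfilling Property for the values already placed; this is immediate by induction, since placing $i$ in a rightmost (dimension-ordered) box of $\mu^{(i)}$ is precisely the condition that $i$ be in the rightmost box of its row in the subfilling $T^{(i)}$. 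Consequently the Level-$0$ filling $T$ at the end of any path is a row-strict filling of $\mu$, hence lies in $M^\mu$ and is a genuine $\JT$-filling by Lemma~\ref{lem:dim_ordered_filling_implies_row_strict}.

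Next, for the ``moreover'' part, I would compute $\Phi(T)$ dimension pair by dimension pair. Fix a path and let $T$ be its Level-$0$ filling. For each $i\in\{2,\dots,n\}$, the edge descending from Level $i$ is labelled $x_i^{j_i}$, meaning $i$ was placed in the box of $\mu^{(i)}$ with dimension-ordering $j_i+1$. By Lemma~\ref{lem:dim_order_filling_gives_these_many_dimpairs}(b) applied to the composition $\mu^{(i)}$ (whose boxes are exactly those of the subfilling $T^{(i)}$), this gives $|\DP^{T^{(i)}}_i| = j_i$. By Lemma~\ref{lem:CompositionDP_equals_TabloidDP}, $|\DP^{T^{(i)}}_i| = |\D_i|$, so $|\D_i| = j_i$. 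Therefore $\Phi(T) = \prod_{i=2}^{n} x_i^{|\D_i|} = \prod_{i=2}^{n} x_i^{j_i}$, which is exactly the product of the edge labels along the path from Level $n$ down to Level $0$ (the edge from Level $1$ to Level $0$ and from Level $0$ to Level B both carry label $1$, and no edge below Level $2$ contributes a variable since $1$ is never the larger element of a dimension pair, cf. Remark~\ref{rem:aBa-monoms_have_no_1}). Since the Level-B leaf is labelled by this same product of edge labels, $\Phi(T)$ equals the monomial $\X\in\B$ neighboring $T$ in Level B, as claimed.

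A small point to handle carefully is the hypothesis of Lemma~\ref{lem:dim_order_filling_gives_these_many_dimpairs}(b): it requires that the box with dimension-ordering $j_i+1$ actually exist in $\mu^{(i)}$, i.e. that $\mu^{(i)}$ have at least $j_i+1$ nonzero rows. In the tree this is automatic, because the branching rule only creates the edge $x_i^{j}$ for $j\in\{0,\dots,r-1\}$ where $r$ is the number of nonzero rows of $\mu^{(i)}$; so along any actual path the label $x_i^{j_i}$ witnesses that such a box exists. (This is the tree-side analogue of Lemma~\ref{lem:why_this_box_exists}, which supplied the same guarantee for the $\Psi$-algorithm.) The main obstacle — really the only subtle step — is making the induction on ``a Level-$i$ vertex carries a partial filling satisfying the Subfilling Property'' airtight, in particular checking that passing from $\mu^{(i)}$ to $\mu^{(i-1)}$ by \emph{not} moving boxes (the crucial modification over the original GP-tree) still leaves a legitimate composition at each stage and does not disturb the rightmost-box condition for the values $i, i+1,\dots,n$ already placed; once that is in hand, everything else is a direct appeal to the three counting lemmas already proved.
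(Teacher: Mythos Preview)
Your proposal is correct and follows essentially the same route as the paper's own proof: both establish the Subfilling Property along a path to invoke Lemma~\ref{lem:dim_ordered_filling_implies_row_strict} for the first assertion, and then chain Lemma~\ref{lem:dim_order_filling_gives_these_many_dimpairs}(b) with Lemma~\ref{lem:CompositionDP_equals_TabloidDP} to identify $|\D_i|$ with the edge-label exponent for the second. Your version is more explicit about the induction and the box-existence check, but the argument is the same.
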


\begin{proof}
Fix a partition $\mu$ of $n$.  Consider a path in the modified GP-tree for $\mu$.  From Level $n-1$ to Level 0, the numbers $n$ through 1 are placed in reverse-order in the dimension-ordered boxes.  Finally at Level 0, a filling $T$ satisfying the Subfilling Property is completed.  By Lemma~\ref{lem:dim_ordered_filling_implies_row_strict}, $T$ is row-strict and hence is an $\JT$-filling.

Let $T$ be a tableau at Level 0, and let $\X = x_2^{\alpha_2}\cdots x_n^{\alpha_n}$ at Level B be the monomial below $T$.  We claim that $\Phi(T)=\X$.  By Lemma~\ref{lem:dim_order_filling_gives_these_many_dimpairs}, for each $i$ the cardinality of $\DP_i^{T^{(i)}}$ equals $\alpha_i$ where $T^{(i)}$ is the $i^{th}$-subfilling of $T$ (recall Definition~\ref{def:subfilling}).  By Lemma~\ref{lem:CompositionDP_equals_TabloidDP}, the value $|\DP_i^{T^{(i)}}|$ will equal $|\DP_i^T|$.  Hence $\DP_i^T$ has exactly $\alpha_i$ dimension pairs so $\Phi(T)=\X$ as desired.
\end{proof}

A surprising application of the modified GP-tree is to count the elements of $M^{\mu}$.  The corollary gives $\A = \B$.

\begin{theorem}\label{thm:cool_modified_tree_application}
Let $\mu = (\mu_1,\ldots,\mu_k)$ be a partition of $n$.  The number of paths in the modified GP-tree for $\mu$ is exactly $\dfrac{n!}{\mu_1!\cdots\mu_k!}$.  In particular, Level 0 is composed of exactly all possible row-strict tableaux of shape $\mu$.
\end{theorem}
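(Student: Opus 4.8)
The plan is to set up an explicit bijection between the root-to-leaf paths of the modified GP-tree for $\mu$ and the row-strict tableaux of shape $\mu$, and then to count the latter directly. One direction of the bijection is essentially already available: by Theorem~\ref{thm:Getting_(h,mu)-fillings_from_GP-tree} the filling at the Level 0 endpoint of any path is a row-strict tableau, and because the modified GP-tree --- unlike the original --- never relocates a box, this tableau has shape exactly $\mu$. So I would let $\pi$ send a path to its Level 0 filling and then argue that $\pi$ is a bijection onto the set of all row-strict tableaux of shape $\mu$.

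For injectivity I would exploit the \emph{``no box is ever moved''} feature of the modified tree. Given the Level 0 filling $T$, the box containing the value $i$ in $T$ is precisely the box into which $i$ was inserted on the step from Level $i$ to Level $i-1$; moreover the unfilled boxes of the Level $i$ vertex are exactly the boxes of $T$ holding values $\le i$, i.e. the shape of the subfilling $T^{(i)}$. Hence the composition $\mu^{(i)}$, its dimension-ordering, and therefore the dimension-order index of $i$'s box --- equivalently the edge label $x_i^{\alpha_i}$ and the child chosen --- are all recoverable from $T$ alone. Running this for $i=n,n-1,\ldots,2$ reconstructs the entire path (the placement of $1$ is forced, and the final Level 0 $\to$ Level B edge is the fixed edge labeled $1$), so $\pi$ is injective. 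For surjectivity I would take an arbitrary row-strict tableau $T$ of shape $\mu$; by Lemma~\ref{lem:dim_ordered_filling_implies_row_strict} it satisfies the Subfilling Property, so each value $i$ sits in the rightmost box of some row of $T^{(i)}$, that is, in a dimension-ordered box of $\mu^{(i)}$. Since the modified GP-tree branches to \emph{every} dimension-ordered box at each vertex, the prescription ``insert $i$ into its $T$-box'' for $i=n,n-1,\ldots,1$ traces out a legitimate path, whose Level 0 filling is $T$; hence $\pi$ is onto.

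With $\pi$ a bijection, the number of paths in the modified GP-tree equals the number of row-strict tableaux of shape $\mu$, and the latter is $\binom{n}{\mu_1,\ldots,\mu_k}=n!/(\mu_1!\cdots\mu_k!)$, because such a tableau amounts to an ordered partition of $\{1,\ldots,n\}$ into blocks of sizes $\mu_1,\ldots,\mu_k$, with each block then forced into increasing order along its row. The ``in particular'' clause is then immediate: surjectivity of $\pi$ together with the fact that every Level 0 filling is a row-strict tableau of shape $\mu$ says that the Level 0 fillings are exactly all such tableaux. I expect the only delicate point to be the bookkeeping in the injectivity argument --- verifying that the Level 0 filling genuinely encodes the whole path --- which rests entirely on the modification that no box is ever pushed up a column; surjectivity and the count of row-strict tableaux are routine.
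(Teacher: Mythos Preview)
Your argument is correct, and it takes a genuinely different route from the paper's. The paper does not construct a bijection directly; instead it observes that the modified GP-tree and the original GP-tree have the same number of paths, then invokes Garsia and Procesi's result \cite[Prop.~3.2]{GP} that $|\B|=\dfrac{n!}{\mu_1!\cdots\mu_k!}$, counts $|M^\mu|$ by the same multinomial, and finally appeals to the fact that distinct paths produce distinct Level~0 fillings (stated ``by construction'' without further justification) to conclude that the injection from paths to row-strict tableaux must be a bijection by cardinality.

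Your approach is more self-contained: you supply both injectivity and surjectivity of $\pi$ explicitly, resting only on the internal Lemma~\ref{lem:dim_ordered_filling_implies_row_strict} and the defining feature of the modified tree that no box is relocated. This avoids the external citation entirely and makes the ``in particular'' clause a direct consequence of surjectivity rather than a cardinality squeeze. The paper's proof is shorter because it outsources the count to \cite{GP}; yours is longer but elementary, and it also fills in the injectivity step that the paper leaves implicit.
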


\begin{proof}
Firstly, the number of paths in the modified GP-tree is the same as in the standard GP-tree.  Garsia and Procesi prove~\cite[Prop. 3.2]{GP} that the dimension of their quotient ring presentation equals $\frac{n!}{\mu_1!\cdots\mu_k!}$.  A direct counting argument proves $|M^{\mu}|$ equals this same number since row-strict tableaux correspond bijectively to the collection of $k$ subsets of $\{1,2,\ldots,n\}$ with $\mu_1,\ldots,\mu_k$ elements, respectively.  Hence $|\B|$ equals $|M^{\mu}|$.  Each of the paths in the modified GP-tree gives a unique $\JT$-filling at Level 0 by construction, and there are $|\B|$ such paths.  Recall that the $\JT$-fillings in this case are the row-strict fillings.  Thus the fillings at Level 0 are not just a subset of row-strict tableaux.  Level 0 is exactly $M^{\mu}$.
\end{proof}

\begin{corollary}\label{cor:A_monom_equal_B_monom_Springer_Case}
The sets of monomials $\A$ and $\B$ coincide.
\end{corollary}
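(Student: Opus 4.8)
The plan is to read this off directly from the two preceding theorems on the modified GP-tree. Recall that $\A$ is by definition the image $\Phi(M^{\mu})$ of the span of row-strict tableaux, and that the Level B leaves of the modified GP-tree for $\mu$ are labelled precisely by the monomials of the Garsia--Procesi basis $\B$: the branching structure and edge labels of the modified tree agree with those of the GP-tree of Definition~\ref{def:tree_of_GP}, so the product of edge labels along each root-to-leaf path is unchanged, and by~\cite{GP} these products are exactly $\B$.

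First I would apply Theorem~\ref{thm:Getting_(h,mu)-fillings_from_GP-tree}. For every path of the tree, the filling $T$ at its Level 0 vertex is a row-strict tableau of shape $\mu$---hence an element of $M^{\mu}$---and $\Phi(T)$ equals the monomial of $\B$ attached to the Level B leaf immediately below $T$. Letting the path vary, this says two things at once: every monomial of $\B$ is of the form $\Phi(T)$ for a row-strict $T$, so $\B\subseteq\A$; and each such $\Phi(T)$ lies in $\B$.

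To obtain the reverse inclusion I would invoke Theorem~\ref{thm:cool_modified_tree_application}, which asserts that the fillings occurring at Level 0 are \emph{all} of $M^{\mu}$, not merely a subset of the row-strict tableaux. Since $\A=\Phi(M^{\mu})$ and every element of $M^{\mu}$ appears at Level 0, the set $\A$ coincides with the collection of Level B monomials, which is $\B$. Hence $\A=\B$.

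There is no real obstacle here---the corollary is a bookkeeping consequence of the machinery already built. The only point worth stating carefully is that Level B of the modified GP-tree genuinely carries the Garsia--Procesi basis and not some relabelling of it; this holds because the modification only appends the trivially-labelled Level 0 beneath Level 1 and records the placed values inside the boxes, leaving the branching and all edge labels identical to those of the original GP-tree.
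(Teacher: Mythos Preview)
Your proof is correct and follows essentially the same approach as the paper: both arguments use Theorem~\ref{thm:Getting_(h,mu)-fillings_from_GP-tree} to identify $\B$ as the image under $\Phi$ of the Level~0 fillings, and Theorem~\ref{thm:cool_modified_tree_application} to conclude that the Level~0 fillings are exactly all of $M^{\mu}$, whence $\A=\Phi(M^{\mu})=\B$. The only difference is cosmetic---you phrase it as two inclusions, while the paper phrases it as two sets being the $\Phi$-image of the same collection of fillings.
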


\begin{proof}
This follows since the image of all $\JT$-fillings under $\Phi$ is $\A$.  The image of the Level 0 fillings in the modified GP-tree is $\B$.  Theorem~\ref{thm:cool_modified_tree_application} implies that both the set of $\JT$-fillings and the Level 0 fillings coincide, and hence it follows that $\A = \B$.
\end{proof}

\begin{corollary}\label{cor:A_and_M_isom_vect_spaces}
$\A$ and $M^{\mu}$ are isomorphic as graded vector spaces.
\end{corollary}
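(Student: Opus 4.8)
The plan is to upgrade the set-theoretic bijection between row-strict tableaux and the monomials $\A$ (packaged in Theorems~\ref{thm:Tab_to_GP} and~\ref{thm:GP_to_Tab}) to a graded vector space isomorphism, the missing ingredient being the dimension count supplied by Corollary~\ref{cor:A_monom_equal_B_monom_Springer_Case} and Theorem~\ref{thm:cool_modified_tree_application}. As Remark~\ref{rem:isom_vector_space_remark} anticipates, this is essentially a formal consequence of $\A=\B$.

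First I would recall the maps already in hand: $\Phi\colon M^\mu\to\A$ is linear, degree-preserving, and onto by Theorem~\ref{thm:Tab_to_GP}, while $\Psi\colon\A\to M^\mu$ is a degree-preserving linear map with $\Phi\circ\Psi=\mathrm{id}_{\A}$ by Theorem~\ref{thm:GP_to_Tab}; in particular $\Psi$ is injective. Writing $M^\mu=\bigoplus_r M^\mu_r$ and $\A=\bigoplus_r\A_r$ for the gradings by degree (equivalently, on the tableau side, by dimension of a tableau), both maps restrict degree-by-degree to linear maps $\Phi_r\colon M^\mu_r\to\A_r$ surjective and $\Psi_r\colon\A_r\to M^\mu_r$ injective, so $\dim_{\mathbb{Q}}M^\mu_r\geq\dim_{\mathbb{Q}}\A_r$ for every $r$.

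Next I would compare total dimensions. Since distinct monomials are linearly independent, $\dim_{\mathbb{Q}}\A$ equals the cardinality of the monomial set $\A$, which by Corollary~\ref{cor:A_monom_equal_B_monom_Springer_Case} equals $|\B|=\tfrac{n!}{\mu_1!\cdots\mu_k!}$. On the other hand $M^\mu$ has as a basis the row-strict tableaux of shape $\mu$, which number exactly $\tfrac{n!}{\mu_1!\cdots\mu_k!}$ by the counting argument in the proof of Theorem~\ref{thm:cool_modified_tree_application}. Hence $\sum_r\dim_{\mathbb{Q}}M^\mu_r=\sum_r\dim_{\mathbb{Q}}\A_r$.

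Finally, the termwise inequalities $\dim_{\mathbb{Q}}M^\mu_r\geq\dim_{\mathbb{Q}}\A_r$ together with the equality of the two sums force $\dim_{\mathbb{Q}}M^\mu_r=\dim_{\mathbb{Q}}\A_r$ for all $r$; a surjection between finite-dimensional vector spaces of equal dimension is an isomorphism, so each $\Phi_r$ is an isomorphism and $\Phi\colon M^\mu\to\A$ is an isomorphism of graded vector spaces (with $\Psi$ as its inverse). I do not expect a genuine obstacle here, since all the substantive content is already carried by Corollary~\ref{cor:A_monom_equal_B_monom_Springer_Case} and Theorem~\ref{thm:cool_modified_tree_application}; the only point requiring care is to run the dimension comparison degree-by-degree, via the termwise inequality plus equal totals, rather than merely comparing total dimensions, so that the resulting isomorphism is genuinely graded.
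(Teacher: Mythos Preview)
Your proof is correct and follows essentially the same approach as the paper: both invoke $\Phi\circ\Psi=\mathrm{id}_{\A}$ from Theorem~\ref{thm:GP_to_Tab}, the equality $\A=\B$ from Corollary~\ref{cor:A_monom_equal_B_monom_Springer_Case}, and the cardinality count from Theorem~\ref{thm:cool_modified_tree_application}, together with the fact that $\Phi$ is degree-preserving. Your explicit degree-by-degree argument (termwise inequalities $\dim M^\mu_r\geq\dim\A_r$ summing to an equality, hence equal in each degree) spells out a step the paper's terse three-sentence proof leaves implicit, but the underlying strategy is the same.
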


\begin{proof}
By Theorem~\ref{thm:GP_to_Tab}, the composition $\Phi\circ\Psi$ is the identity on $\A$.  Since $\A$ equals $\B$, Theorem~\ref{thm:cool_modified_tree_application} implies the cardinality of $\A$ equals the cardinality of the generating set of row-strict tableaux in $M^{\mu}$.  Also, $\Phi$ is a degree-preserving map so $\A$ and $M^{\mu}$ are isomorphic as graded vector spaces.
\end{proof}


\section{The regular nilpotent Hessenberg setting}\label{sec:RegNilpHess_Setting}
When we fix the Hessenberg function $h=(1,2,\ldots,n)$ and let the shape $\mu$ (equivalently, the nilpotent $X$) vary, the image of $\Phi$ is a very meaningful set of monomials: the Garsia-Procesi basis $\B$ for the cohomology ring of the Springer variety, $H^*(\mathfrak{S}_X)$.  Moreover there is a well-defined inverse map $\Psi$.  What if we now let $h$ vary?  Are these new monomials in the image of $\Phi$ still meaningful?  For other Hessenberg functions, the map $\Psi$ no longer maps reliably back to the original filling.  For example if $h=(1,3,3)$ then $\Phi\left(\TwoOneTab[.5]321\right) = x_3$, but $\Psi(x_3) = \TwoOneTab[.5]123$.  Attempts so far to define an inverse map that work for all Hessenberg functions and all shapes $\mu$ have been unsuccessful.

However, when we fix the shape $\mu=(n)=\setlength{\unitlength}{.15in}\begin{picture}(5,1)(0,0) \linethickness{.25pt}\multiput(0,0)(0,1){2}{\line(1,0){5}}
\multiput(0,0)(1,0){3}{\line(0,1){1}}
\put(2.35,.15){$\cdots$}\put(4,0){\line(0,1){1}}\put(5,0){\line(0,1){1}}\end{picture}$ (equivalently, fix the nilpotent $X$ to have exactly one Jordan block) and let the functions $h$ vary, we get an important family of varieties called the \textit{regular nilpotent Hessenberg varieties}.  In this setting the image of $\Phi$ is indeed a meaningful set of monomials $\Ah$.  They coincide with a basis $\Bh$ of a polynomial quotient ring $R/J_h$ which we conjecture (with supporting data) is a presentation for the cohomology ring of the regular nilpotent Hessenberg varieties.  In this section we will fill in the details of Figure~\ref{fig:The_Reg_Nilp_Triangle}.
\begin{figure}[!ht]
$$
\xymatrix{
& H^*(\mathfrak{H}(X,h)) \ar@{<~>}[dl] \ar@{<-->}[dr]^{\cong\;?}\\
\txt{$\JT$-fillings\\spanning $M^{h,\mu}$}\ar@{<->}[rr]^{\stackrel{\Phi}{\longrightarrow}}_{\stackrel{\longleftarrow}{\Psi_h}} & & \txt{$R/J_h$ with\\$\Ah=\Bh$\\basis}
}
$$
\caption{\label{fig:The_Reg_Nilp_Triangle}Regular nilpotent Hessenberg setting.}
\end{figure}
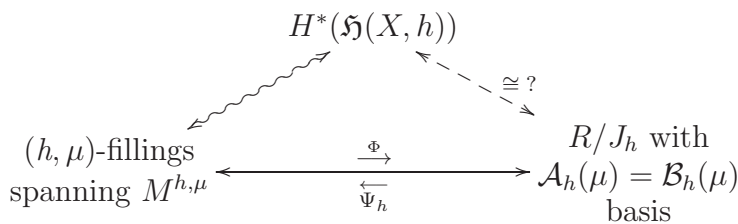

Recall that the dimensions of the graded parts of $H^*(\mathfrak{H}(X,h))$ are combinatorially described by the $\JT$-fillings.  This gives the geometric description of the cohomology ring denoted by the left edge of the triangle.  The formal $\mathbb{Q}$-linear span of the $\JT$-fillings is denoted $M^{h,\mu}$.  The map $\Phi$ is a graded vector space morphism from $M^{h,\mu}$ to the span of monomials $\Ah$.  In Subsection~\ref{subsec:Ah_equals_Bh_RegNilpCase}, we show that $\Phi$ is actually a graded isomorphism, completing the bottom leg of the triangle.  In Theorem~\ref{thm:algebraic_view_of_R/J_h}, we conclude that the generators of degree $i$ in $R/J_h$ correspond to $\JT$-fillings of dimension $i$ and hence to the $2i^{th}$ Betti number of the regular nilpotent Hessenberg varieties.  This gives a view towards an algebraic description of $H^*(\mathfrak{H}(X,h))$.

In Subsection~\ref{subsec:building_h_tab_tree}, for a given Hessenberg function $h$ we build an $h$-tableau tree.  This tree assumes the role that the modified GP-tree filled in Subsection~\ref{subsec:A_equals_B_monomials}.  In Subsection~\ref{subsec:Psi_h_map}, we construct the inverse map $\Psi_h$ from the span of the monomials $\Ah$ to the the vector space $M^{h,\mu}$.  Finally in Subsection~\ref{subsec:Ah_equals_Bh_RegNilpCase}, we show that the monomials $\Ah$ coincide with the basis $\Bh$ of the quotient $R/J_h$.


\subsection{The ideal \texorpdfstring{$J_h$}{Jh}, the quotient ring \texorpdfstring{$R/J_h$}{R/Jh}, and its basis \texorpdfstring{$\Bh$}{Bh}}\label{subsec:forthcomingwork}
We briefly describe the construction of the quotient ring $R/J_h$ where $R$ is the polynomial ring $\mathbb{Z}[x_1,\ldots,x_n]$ and $J_h$ is a combinatorially-described ideal generated by partial symmetric functions.  In Section~\ref{sec:evidence}, we offer evidence leading us to believe that this quotient may indeed be a presentation for the cohomology ring $H^*(\mathfrak{H}(X,h))$ of all regular nilpotent Hessenberg varieties.  The strengths of this presentation are in its ease of construction and the manner in which it reveals aspects of the integral cohomology of these special varieties.  In forthcoming work~\cite{Mb2} we explore the rich development of the ideal $J_h$ and its intimate connection to a generalization of the Tanisaki ideal (see Subsection~\ref{subsec:forthcoming_work}).

\begin{definition}  Let $S\subseteq\{x_1,\ldots,x_n\}$.  We define $\tilde{e}_r(S)$ to be the \textit{modified complete symmetric function} of degree $r$ in the variables $S$.  For example, $\tilde{e}_2(x_3,x_4) = x_3^2 + x_3x_4 + x_4^2$.
\end{definition}

\begin{definition}[Degree tuple]\label{def:beta_tuple}
Let $h=(h_1,h_2,\ldots,h_n)$ be a Hessenberg function.  The \textit{degree tuple} corresponding to $h$ is $\beta=(\beta_n,\beta_{n-1},\ldots,\beta_1)$ where $\beta_i = i - \#\{h_k \;|\; h_k < i\}$ for each $1 \leq i \leq n$.
\end{definition}

\begin{remark}
We call it a degree tuple because its entries are the degrees of the generating functions for the ideal $J_h$.  The convention of listing the $\beta_i$ in descending subscript order in a degree tuple highlights that the $i^{th}$ entry of a tuple corresponds to a generating function in exactly $i$ variables.  Degree tuples have many rich connections to Hessenberg functions, Dyck paths, Catalan numbers, and other combinatorial data.  These connections are explored more in the author's thesis~\cite{Mb-thesis}.
\end{remark}

\begin{definition}[The ideal $J_h$]
Let $h=(h_1,\dots,h_n)$ be a Hessenberg function with corresponding degree tuple $\beta=(\beta_n,\beta_{n-1},\ldots,\beta_1)$.  The ideal $J_h$ is defined as follows:
$$J_h:=\langle \tilde{e}_{\beta_n}(x_n),\tilde{e}_{\beta_{n-1}}(x_{n-1},x_n),\ldots,\tilde{e}_{\beta_1}(x_1,\ldots,x_n)\rangle.$$
\end{definition}

Proof of the following theorem involves commutative algebra that is beyond the scope of this paper.  Details can be found in the author's thesis~\cite{Mb-thesis}.

\begin{theorem}[A Basis for $R/J_h$]\label{thm:Basis_for_R/J}  Let $J_h$ be the ideal corresponding to the Hessenberg function $h$.  Then $R/J_h$ has the basis
$$\Bh := \left\lbrace x_1^{\alpha_1} x_2^{\alpha_2} \cdots x_n^{\alpha_n} \; \vline \; 0\leq\alpha_i\leq\beta_i-1, i=1,\ldots,n \right\rbrace.$$
\end{theorem}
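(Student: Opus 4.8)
The plan is to prove this by exhibiting the given generators of \(J_h\) as a Gröbner basis for a suitable monomial order. Once that is done, the standard monomials (those not lying in the leading-term ideal) automatically form a module basis for \(R/J_h\), and it only remains to check that this standard monomial set is exactly \(\Bh\).

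Concretely, write \(g_i := \tilde{e}_{\beta_i}(x_i, x_{i+1}, \ldots, x_n)\), so that \(J_h = \langle g_1, \ldots, g_n\rangle\), and fix the lexicographic order on \(R = \mathbb{Z}[x_1,\ldots,x_n]\) with \(x_1 > x_2 > \cdots > x_n\). By definition \(g_i\) is the sum, each with coefficient \(1\), of all degree-\(\beta_i\) monomials in the variables \(x_i, \ldots, x_n\); in particular \(g_i\) is homogeneous of degree \(\beta_i\), and among its monomials the lexicographically largest is the one putting the whole degree on the smallest-indexed (hence largest) available variable, namely \(x_i^{\beta_i}\). Thus \(LT(g_i) = x_i^{\beta_i}\) for each \(i\); note this is consistent with \(g_n = \tilde{e}_{\beta_n}(x_n) = x_n^{\beta_n}\).

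The key point is that the leading monomials \(x_1^{\beta_1}, \ldots, x_n^{\beta_n}\) are pairwise coprime, being powers of distinct variables. By Buchberger's first criterion — if the leading terms of two polynomials share no variable, then their \(S\)-polynomial reduces to \(0\) modulo the pair — every \(S\)-polynomial among the \(g_i\) reduces to \(0\), so \(\{g_1,\ldots,g_n\}\) is already a Gröbner basis of \(J_h\). Hence \(\LTJ = \langle x_1^{\beta_1}, \ldots, x_n^{\beta_n}\rangle\), and the monomials not divisible by any \(x_i^{\beta_i}\) — precisely the \(x_1^{\alpha_1}\cdots x_n^{\alpha_n}\) with \(0 \le \alpha_i \le \beta_i - 1\) for all \(i\) — project to a basis of \(R/J_h\). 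That set is exactly \(\Bh\), which finishes the argument. Since every \(LT(g_i)\) has coefficient \(1\), the division algorithm never requires inverting a non-unit, so the reasoning is valid over \(\mathbb{Z}\) and not merely over a field.

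The step I expect to carry the real content is establishing that \(\Bh\) is \emph{linearly independent} in \(R/J_h\), i.e.\ the lower bound on the rank; the Gröbner basis property delivers this for free, but anyone avoiding Gröbner language must supply it by hand — for instance by a Hilbert-series computation, using that \(g_1,\ldots,g_n\) is a homogeneous regular sequence (which holds because the quotient is finite), giving rank \(\prod_i \beta_i = |\Bh|\) to match the easy spanning bound obtained by repeatedly rewriting \(x_i^{\beta_i}\) via \(g_i = 0\). The only computational subtlety in the Gröbner route itself is the identification \(LT(g_i) = x_i^{\beta_i}\), which is immediate once the definition of \(\tilde{e}_{\beta_i}\) is unwound; everything after that is a formal application of standard Gröbner basis theory.
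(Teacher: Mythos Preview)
Your proof is correct and matches the paper's intended approach: the paper does not actually prove this theorem in-text (it defers to the author's thesis, citing ``commutative algebra that is beyond the scope of this paper''), but the elaborative example in Subsection~\ref{subsec:elab_example} makes clear that the argument is exactly the Gr\"obner basis one you give --- the generators of $J_h$ form a Gr\"obner basis (leading terms $x_i^{\beta_i}$ are pairwise coprime), so $\LTJ = \langle x_1^{\beta_1},\ldots,x_n^{\beta_n}\rangle$ and the standard monomials are precisely $\Bh$. Your remark that unit leading coefficients make the argument valid over $\mathbb{Z}$ is a useful addition.
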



\subsection{Constructing an \texorpdfstring{$h$}{h}-tableau-tree}\label{subsec:building_h_tab_tree}
Analogous to the Springer case we first build a tree, which we call an \textit{$h$-tree}, whose leaves give a basis $\Bh$ for the quotient ring $R/J_h$.  As with the modified GP-tree, we then take these leaves and describe how to construct a corresponding $\JT$-filling.  We label the vertices of the $h$-tree to produce a graph which we call an \textit{$h$-tableau-tree}.

\begin{remark}
In the Springer setting of Section~\ref{sec:Springer_Setting}, the levels in the trees are labelled in descending order from the top Level $n$ down to Level 1 in the case of the GP-tree (with the additional lower Levels 0 and B in the case of the modified GP-tree).  This descending label convention is meant to reflect the method of how to build the $\JT$-fillings in this Springer setting by inserting the numbers 1 thru $n$ in descending order.  However in the regular nilpotent setting of the current section, we build the $\JT$-fillings by inserting the numbers 1 thru $n$ in \emph{ascending} order.  The trees in this section reflect this method by being labelled from the top Level 1 down to Level $n+1$.
\end{remark}

\begin{definition}[$h$-tree]
Given a Hessenberg function $h=(h_1,h_2,\ldots,h_n)$, the corresponding \textit{$h$-tree} has $n+1$ levels labelled from the top Level 1 to the bottom Level $n+1$.  We start with one vertex at Level~1.  For $i\in\{2,\ldots,n\}$, each vertex at Level $i-1$ has exactly $\beta_i$ edges directed down to Level $i$ injectively (that is, no two Level $i-1$ vertices share an edge with the same Level $i$ vertex).  For each of the vertices at Level $i-1$, label the $\beta_i$ edges directed down to Level $i$ with the labels $\{x_i^{\beta_i-1}, x_i^{\beta_i-2}, \ldots, x_i^2, x_i, 1\}$ from left to right.  Connect each vertex at Level $n$ to a unique leaf at Level $n+1$, and label the corresponding edges with the value 1.  Label each leaf at Level $n+1$ with  the product of the edge labels of the path connecting the root vertex on Level 1 with this leaf.
\end{definition}

We omit the proof of the following proposition for it is a direct consequence of the definition of the basis $\Bh$ given in Theorem~\ref{thm:Basis_for_R/J} and the construction of an $h$-tree.

\begin{proposition}\label{thm:leaves_remarks_and_R/J_basis_from_tree}
Let $h=(h_1,\ldots,h_n)$ be a Hessenberg function.  Then
\begin{enumerate}
	\item The number of leaves in the $h$-tree at Level $n+1$ equals $\prod_{i=1}^n \beta_i$.
	\item The collection of leaf labels at Level $n+1$ in the $h$-tree is exactly the basis of monomials $\Bh$ of $R/J_h$ given by Theorem~\ref{thm:Basis_for_R/J}.
\end{enumerate}
\end{proposition}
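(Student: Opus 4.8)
The plan is to read both claims straight off the recursive description of the $h$-tree, handling part~(1) as a counting induction on levels and part~(2) as an explicit bijection between root-to-leaf paths and the monomials of $\Bh$. The one preliminary fact I would record is that $\beta_1 = 1$: since every Hessenberg function satisfies $h_k \geq k \geq 1$, the set $\{h_k \mid h_k < 1\}$ is empty, so $\beta_1 = 1 - 0 = 1$; similarly $\beta_i \geq 1$ for all $i$, so the tree is genuinely well-defined (every non-leaf vertex has at least one child).

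For part~(1), let $N_i$ be the number of vertices at Level~$i$. The root gives $N_1 = 1 = \beta_1$, and the rule that each Level~$(i-1)$ vertex has exactly $\beta_i$ outgoing edges, directed injectively into Level~$i$, gives $N_i = \beta_i N_{i-1}$ for $2 \leq i \leq n$; by induction $N_i = \prod_{j=1}^{i}\beta_j$. Since each Level~$n$ vertex is joined to a unique Level~$(n+1)$ leaf by a single edge, the number of leaves is $N_n = \prod_{j=1}^{n}\beta_j = \prod_{i=1}^n \beta_i$, which is exactly the assertion of part~(1).

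For part~(2), I would define $\Theta$ sending a root-to-leaf path to the product of its edge labels and check it is a bijection onto $\Bh$. A path from Level~1 to Level~$(n+1)$ uses one edge from Level~$(i-1)$ to Level~$i$ for each $i \in \{2,\ldots,n\}$, carrying a label $x_i^{e_i}$ with $e_i \in \{0,1,\ldots,\beta_i-1\}$, together with the final trivial edge labelled~$1$; thus $\Theta$ of the path is $x_2^{e_2}\cdots x_n^{e_n}$, and the exponent of $x_i$ in this product recovers $e_i$. Because the tree has a unique root and the $\beta_i$ edges out of any vertex carry distinct labels, a path is determined by its sequence of edge labels, so $\Theta$ is injective; because all $\beta_i$ choices are available at every branching, every tuple $(e_2,\ldots,e_n)$ with $0 \leq e_i \leq \beta_i-1$ is realized, so $\Theta$ is onto $\{x_2^{\alpha_2}\cdots x_n^{\alpha_n} \mid 0 \leq \alpha_i \leq \beta_i - 1\}$. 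Finally, since $\beta_1 = 1$ forces $\alpha_1 = 0$ in the description of $\Bh$ in Theorem~\ref{thm:Basis_for_R/J}, this target set is precisely $\Bh$, giving part~(2).

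There is no real obstacle here; the only point demanding care is the index bookkeeping — confirming that the $n-1$ nontrivial branchings (Levels~$1$ through $n-1$ down to Levels~$2$ through $n$) plus the trivial last edge produce monomials in $x_2,\ldots,x_n$ with $x_1$ absent, consistent both with the stated form of $\Bh$ and with the leaf count $\prod_{i=1}^n\beta_i = |\Bh|$ obtained in part~(1).
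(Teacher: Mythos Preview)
Your proof is correct and is precisely the straightforward verification the paper has in mind: the paper actually omits the proof entirely, remarking that the proposition ``is a direct consequence of the definition of the basis $\Bh$ given in Theorem~\ref{thm:Basis_for_R/J} and the construction of an $h$-tree,'' and your write-up simply makes that direct consequence explicit.
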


\begin{example} Let $h=(2,3,3)$ be a Hessenberg function.  It has corresponding degree tuple $\beta=(2,2,1)$.  Figure~\ref{fig:h-tree_for_h=(2,3,3)} shows the corresponding $h$-tree.

\begin{figure}[!ht]
$$
\xymatrix{
{\mbox{Level 1}} & & & & {\bullet} \ar[dll]_{x_2} \ar[drr]^{1}\\
{\mbox{Level 2}} & & {\bullet} \ar[dl]_{x_3} \ar[dr]^{1} & & & & {\bullet} \ar[dl]_{x_3} \ar[dr]^{1}\\
{\mbox{Level 3}} & {\bullet} \ar[d]_{1} & & {\bullet} \ar[d]_{1} & & {\bullet} \ar[d]_{1} & & {\bullet} \ar[d]_{1}\\
{\mbox{Level 4}} & {x_2x_3} & & {x_2} & & {x_3} & & {1}
}
$$
\caption{\label{fig:h-tree_for_h=(2,3,3)}The $h$-tree for $h=(2,3,3)$.}
\end{figure}
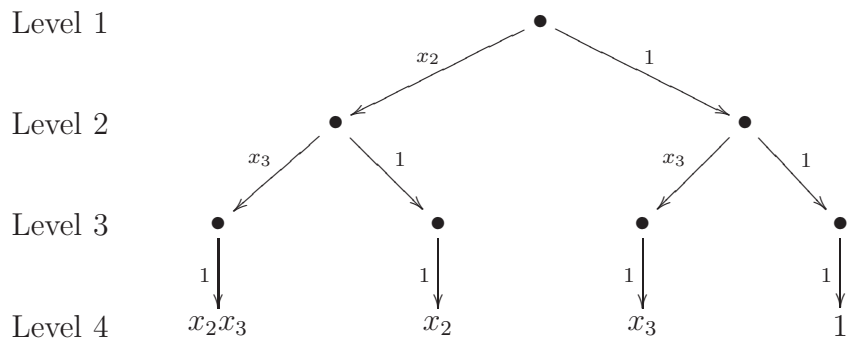
\end{example}

Before we give the precise construction of an $h$-tableau-tree, we define a barless tableau and give a lemma that instructs us how to fill this object to construct a tableau.

\begin{definition}[Barless tableau]
Fix $n$.  A \textit{barless tableau} is a diagram $\FiveTab$ filled with some proper subset of $\{1,\ldots,n\}$ without any bars.
\end{definition}

\begin{remark}[Using a barless tableau to build an $\JT$-filling]\label{rem:building_fillings}
We will place the values $1,2,\ldots, n$ into a barless tableau satisfying an $h$-permissibility condition.  When all $n$ numbers are in the barless tableau, we will introduce bars so that it is a traditional tableau.
\end{remark}

\begin{definition}[$h$-permissibility conditions]
Suppose we have placed the numbers $1,\ldots,i-1$ into a barless tableau.  We say that the numbers are in \textit{$h$-permissible} positions if each horizontal adjacency adheres to the rule: $k$ is immediately left of $j$ if and only if $k\leq h_j$.
\end{definition}

The lemma below allows us to predict how many $h$-permissible positions are available for the next value $i$.

\begin{lemma}\label{lem:bullet_correspondence}
Let $h=(h_1,\ldots,h_n)$ be a Hessenberg function.  If a barless tableau is filled with $1,2,\ldots,i-1$, then the number of $h$-permissible positions for $i$ in this tableau is exactly $\beta_i$, where $(\beta_n,\beta_{n-1},\ldots,\beta_1)$ is the degree tuple corresponding to $h$.
\end{lemma}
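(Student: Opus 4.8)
The plan is to count, for a barless tableau already filled with $1,2,\ldots,i-1$ in $h$-permissible positions, the number of valid slots for the next value $i$. Recall $\beta_i = i - \#\{h_k \mid h_k < i\}$. Since $h$ is a Hessenberg function, $h_k < i$ forces $k < i$ (because $k \le h_k$), so we may rewrite $\beta_i = i - \#\{k < i \mid h_k < i\} = \#\{k \in \{1,\ldots,i-1\} \mid h_k \ge i\} + 1$ (the $+1$ accounting for the "free" count). I would record this reformulation first, as it is the arithmetic heart of the matching.

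**The correspondence.** The numbers $1,\ldots,i-1$ sit in the barless tableau forming some horizontal runs; the available positions for $i$ are: (i) immediately to the right of some already-placed value $k$ provided $k \le h_i$; (ii) at the left end of a run, i.e. immediately to the left of some already-placed value $j$, provided the new adjacency $i \mapsto j$ satisfies $i \le h_j$, which since $j < i$ means we need $h_j \ge i$; and (iii) in an entirely empty cell / isolated position. The key claim to prove is that the number of type-(i) slots where $k$ currently has no right-neighbor, plus the number of type-(ii) slots, plus the type-(iii) slot, equals $\beta_i$. I would first argue that every one of the values $1,\ldots,i-1$ with $k \le h_i$ that currently lacks a right-neighbor contributes a slot, and separately count the slots of the form "left of $j$ with $h_j \ge i$." The subtlety is that a single placement of $i$ both occupies a right-of-$k$ slot and a left-of-$j$ slot when $i$ goes between two adjacent entries, so I must set up the count so slots are not double-booked — the cleanest way is to count \emph{gaps}: think of the linear arrangement of the existing runs and count positions where $i$ can be inserted.

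**Inductive bookkeeping.** The main technical step is an induction on $i$ showing that after placing $1,\ldots,i-1$ the configuration of runs is exactly determined by which of the earlier $h$-permissibility constraints were active, and that the count of insertable gaps telescopes correctly to $\beta_i$. I expect the cleanest route is: a position "to the right of $k$" is usable for $i$ iff $k \le h_i$, i.e. (since $h$ is nondecreasing and $k \le i-1$) iff $k \le h_i$, and because $h_i \ge i$ always, \emph{every} $k \in \{1,\ldots,i-1\}$ with $h_i \ge k$ is a candidate — but it is blocked iff $k$ already has a right-neighbor $\ell$, which happened exactly when $\ell$ was placed to the right of $k$, requiring $\ell \le h_{?}$... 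Rather than chase this, I would use the slicker global count: the $i-1$ placed values occupy $i-1$ cells arranged in some runs; the number of "right of $k$" insertion gaps is $i-1$ minus (number of adjacent pairs already formed), and one shows each adjacent pair $(k,\ell)$ with $k$ left of $\ell$ satisfies $k \le h_\ell$, while the extra end-of-tableau / isolated slot adds $1$; then the $h$-permissibility filter for $i$ removes exactly the gaps violating the rule, and a direct count shows the survivors number $\#\{k<i \mid h_k \ge i\}+1 = \beta_i$.

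**Main obstacle.** The hard part is the bookkeeping of \emph{which} gaps survive the $h$-permissibility filter without double-counting the gap between two adjacent existing entries, and proving this is independent of the particular $h$-permissible filling reached so far (only $h$ and $i$ matter). I anticipate the resolution is an induction showing the \emph{multiset of run-lengths} — or more precisely the set of "right neighbors already determined" — is forced by $h$, so that the available-gap count is well-defined; alternatively, one exhibits an explicit bijection between the $h$-permissible slots for $i$ and the set $\{k \in \{1,\ldots,i-1\} : h_k \ge i\} \cup \{\star\}$. Once that bijection is in hand, the cardinality equals $\beta_i$ by the reformulation in the first paragraph, and the lemma follows.
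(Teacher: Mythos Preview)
Your reformulation $\beta_i = \#\{k \in \{1,\ldots,i-1\} : h_k \geq i\} + 1$ is correct and is exactly the arithmetic the paper uses, and the bijection you propose at the end --- matching slots to $\{k < i : h_k \geq i\} \cup \{\star\}$ --- is precisely the paper's argument. So the core idea is right.

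However, you have manufactured an obstacle that is not there, because you have misread the setup. We are in the regular nilpotent case $\mu=(n)$: the barless tableau is a \emph{single contiguous row} containing $1,\ldots,i-1$ in some order. There are no multiple ``runs,'' no ``isolated cells,'' and no question of the configuration depending on the earlier choices. There are exactly $i$ gaps: one immediately to the left of each of the $i-1$ existing entries, and one at the far right. Inserting $i$ into the gap immediately left of $k$ creates two adjacencies: the left one $a\,|\,i$ (if $a$ exists) is automatically permissible because $a \leq i-1 < i \leq h_i$; the right one $i\,|\,k$ is permissible iff $i \leq h_k$. The far-right gap has no right-neighbor constraint and is always permissible. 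Hence the valid gaps are in bijection with $\{k < i : h_k \geq i\} \cup \{\star\}$, and the count is $\beta_i$ --- manifestly independent of the particular filling, with no double-counting issue.

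So your ``main obstacle'' (bookkeeping of surviving gaps, independence from the filling, an induction on run structure) evaporates once you see that the left-neighbor condition is vacuous for the largest number. The paper's proof is literally that one observation plus the count; strip out the talk of runs and your proposal collapses to it.
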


\begin{proof}
Let $h=(h_1,\ldots,h_n)$ be a Hessenberg function and $\beta=(\beta_n,\beta_{n-1},\ldots,\beta_1)$ be its corresponding degree tuple.  Suppose a barless tableau is filled with $1,2,\ldots,i-1$.  Consider $\beta_i$.  By definition $\beta_i = i - \#\{h_k \;|\; h_k < i\}$ and so $\#\{h_k \;|\;h_k<i\}$ equals $i-\beta_i$.  Since each $h_k$ is at least $k$, only the values $h_1,\ldots,h_{i-1}$ can possibly lie in the set $\{h_k \;|\; h_k<i\}$.  The remaining $(i-1) - (i-\beta_i) = \beta_i-1$ of the $h_1,\ldots,h_{i-1}$ satisfy $i\leq h_k$ which is the $h$-permissibility condition for the descent \setlength{\unitlength}{.15in}\begin{picture}(2,1)(0,0)
\linethickness{.25pt}
\multiput(0,0)(0,1){2}{\line(1,0){2}}
\multiput(0,0)(1,0){3}{\line(0,1){1}}
\put(.5,.5){\makebox(0,0){\begin{small}$i$\end{small}}}
\put(1.5,.5){\makebox(0,0){\begin{small}$k$\end{small}}}\end{picture}~.  Hence $i$ can be placed to the immediate left of any of these $\beta_i-1$ values.  This gives $\beta_i-1$ positions that are $h$-permissible positions.  In addition, the value $i$ can be placed to the right of the far-right entry since $i$ is larger than any number $1,\ldots,i-1$ in the barless tableau.  This yields a total of $(\beta_i-1)+1=\beta_i$ possible $h$-permissible positions for $i$.
\end{proof}

\begin{definition}[$h$-tableau-tree]\label{def:h_tab_tree}
Let $h=(h_1,\ldots,h_n)$ be a Hessenberg function and $\beta=(\beta_n,\beta_{n-1},\ldots,1)$ be its corresponding degree tuple.  The \textit{$h$-tableau-tree} is the $h$-tree together with an assignment of barless tableaux to label each vertex on Levels 1 to $n$.  The top is Level 1 and has a single barless tableau with the entry 1.  Given a barless tableau $T$ at Level $i-1$ with fillings $1,\ldots,i-1$, we obtain the $\beta_i$ different Level $i$ barless tableaux by the following algorithm:
\begin{itemize}
 \item Place a bullet at each of the $h$-permissible positions in the barless tableau $T$.  The diagram at Level $i$ joined by the edge $x_i^j$ is found by replacing the $(j+1)^{th}$ bullet (counting right to left) with the number $i$ and erasing all other bullets. 
\end{itemize}
When we reach Level $n$, each barless tableau will contain the numbers $1,\ldots,n$.  We may now place the bars into this tableau yielding a filling of $\mu$.
\end{definition}

\begin{remark}
Observe that travelling from a barless tableau at Level $i-1$ down to a barless tableau at Level $i$, Lemma~\ref{lem:bullet_correspondence} asserts there will be exactly $\beta_i$ bullets going right to left.  Hence $h$-tableau-trees are well-defined.
\end{remark}

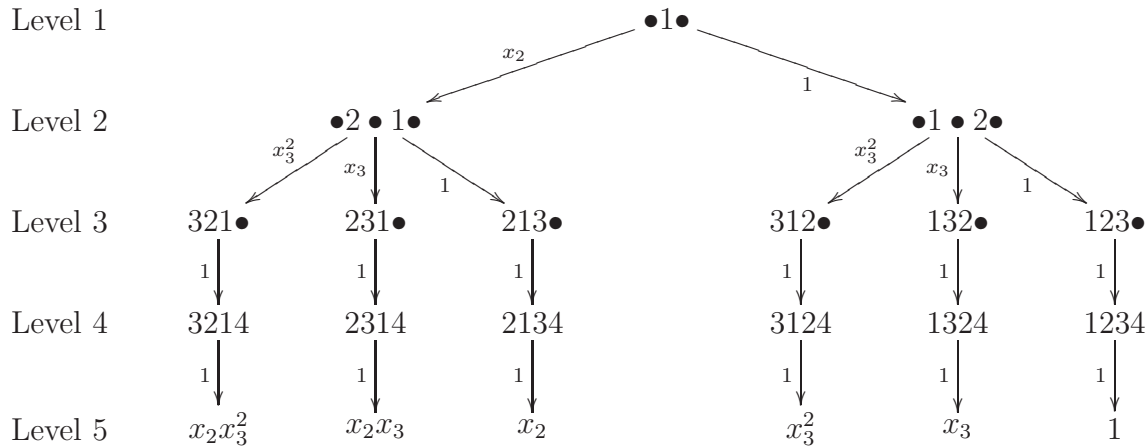
\begin{figure}[!ht]
\begin{center}
{\xymatrix{
{\mbox{Level 1}} & & & & {\bullet 1 \bullet} \ar[dll]_{x_2} \ar[drr]_1\\
{\mbox{Level 2}} & & {\bullet 2 \bullet 1 \bullet} \ar[dl]_{x_3^2} \ar[d]_{x_3} \ar[dr]_1 & & & & {\bullet 1 \bullet 2 \bullet} \ar[dl]_{x_3^2} \ar[d]_{x_3} \ar[dr]_1\\
{\mbox{Level 3}} & {321\bullet} \ar[d]_1 & {231\bullet} \ar[d]_1 & {213\bullet} \ar[d]_1 & & {312\bullet} \ar[d]_1 & {132\bullet} \ar[d]_1 & {123\bullet} \ar[d]_1\\
{\mbox{Level 4}} & {3214} \ar[d]_1 & {2314} \ar[d]_1 & {2134} \ar[d]_1 & & {3124} \ar[d]_1 & {1324} \ar[d]_1 & {1234} \ar[d]_1\\
{\mbox{Level 5}} & {x_2x_3^2} & {x_2x_3} & {x_2} & & {x_3^2} & {x_3} & {1}
}
}
\caption{\label{fig:tab_tree_example} The $h$-tableau-tree for $h=(3,3,3,4)$.}
\end{center}
\end{figure}
\vspace{-.25in}


\begin{example}\label{exam:remarks_about_h-tab_tree_figure}
In Figure~\ref{fig:tab_tree_example}, we give the $h$-tableau-tree for $h=(3,3,3,4)$.  The corresponding degree tuple is $\beta=(1,3,2,1)$.  For ease of viewing, we omit the barless tableaux's rectangular boundaries and just give the fillings.  Observe that the six Level 4 tableaux are $\JT$-fillings.  There are only six possible $\JT$-fillings for this particular Hessenberg function and hence these are \textit{all} the $\JT$-fillings.  Further, the function $\Phi$ maps each one to the monomial in $\Bh$ on Level 5.  We conclude that the set of monomials $\mathcal{A}_{(3,3,3,4)}(\mu)$ coincides with the monomial basis $\mathcal{B}_{(3,3,3,4)}(\mu)$ for $R/J_h$ when using this regular nilpotent shape $\mu=(n)$.  We generalize these points in the next subsection, where we exhibit the inverse map to $\Phi$ in the setting of regular nilpotent Hessenberg varieties. Compare this with the elaborative example from Subsection~\ref{subsec:elab_example}.
\end{example}

\subsection{The inverse map \texorpdfstring{$\Psi_h$}{Psi-h} from monomials in \texorpdfstring{$\Bh$}{B-h(mu)} to \texorpdfstring{$\JT$}{(h,mu)}-fillings}\label{subsec:Psi_h_map}

Recall from Subsection~\ref{subsec:Phi_map} that the function $\Phi$ from $\JT$-fillings onto the set $\Ah$ of monomials is given by the map $$T \longmapsto \prod\limits_{\substack{(i,j) \in \D_j \\ 2 \leq j \leq n}} x_j.$$
In the Springer setting, we first constructed the inverse map $\Psi$ from $\A$ to $\JT$-fillings, then proved $\A=\B$.  In the regular nilpotent Hessenberg setting we will again prove that $\Phi$ is a graded vector space isomorphism, but this time we first construct an inverse map $\Psi_h$ from $\Bh$ and then verify $\Ah=\Bh$.  In this new setting this plan of attack is used since we know more about the structure of $\Bh$ (see Theorem~\ref{thm:Basis_for_R/J}), whereas in the Springer setting the basis $\B$ was given by Garsia and Procesi via a recursion formula~\cite[Equation 1.2]{GP}.  As the remarks in Example~\ref{exam:remarks_about_h-tab_tree_figure} disclosed, we will show the following:
\begin{enumerate}
 \item The Level $n$ fillings in the $h$-tableau-tree are distinct $\JT$-fillings.
 \item The number of $\JT$-fillings equals the number of leaves of the $h$-tableau-tree.
 \item The Level $n$ fillings are \textit{all} possible $\JT$-fillings.
 \item The function $\Phi$ maps each of these fillings to the monomial $\X \in \Bh$ below it at Level $n+1$.
 \item The set $\Ah$ coincides with the set $\Bh$.
\end{enumerate}

\begin{theorem}\label{thm:Level_n_fillings_are_JT}
Let $h=(h_1,\ldots,h_n)$ be a Hessenberg function.  The Level $n$ fillings of the corresponding $h$-tableau-tree are distinct $\JT$-fillings.
\end{theorem}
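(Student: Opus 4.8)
The plan is to prove the two assertions of the statement separately: first that every Level~$n$ filling produced by the $h$-tableau-tree is an $\JT$-filling, and then that two distinct leaves of the tree carry distinct Level~$n$ fillings. For the first assertion I would argue by induction on the level, maintaining the invariant that at every vertex the barless tableau has its entries in $h$-permissible positions. This is vacuous at Level~$1$. Suppose a Level~$(i-1)$ vertex carries a barless tableau $T$ filled with $1,\dots,i-1$ in $h$-permissible positions. By Lemma~\ref{lem:bullet_correspondence} exactly $\beta_i$ positions for $i$ are $h$-permissible, and by Definition~\ref{def:h_tab_tree} these are precisely the bullet slots; inserting $i$ into any one of them destroys one old adjacency (between two smaller numbers, which was already $h$-permissible) and creates only adjacencies involving $i$, the adjacency on the left of $i$ being automatically permissible since $i$ exceeds every earlier entry and $h_i\geq i$, and the adjacency on the right being permissible by the definition of an $h$-permissible slot. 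Hence the Level~$i$ barless tableaux are again $h$-permissible, so each Level~$n$ barless tableau, viewed as a filling of the one-row shape $\mu=(n)$, has every horizontal adjacency obeying the rule of Subsection~\ref{subsec:Tym_Work}; that is exactly an $\JT$-filling.

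For the second assertion, take two distinct leaves. Their root-to-leaf paths coincide through some Level~$i-1$ and then branch, one following the edge labelled $x_i^{j}$ and the other the edge labelled $x_i^{j'}$ with $j\neq j'$. At the shared Level~$(i-1)$ vertex both paths see the same barless tableau $T$ on $\{1,\dots,i-1\}$, and by construction one path places $i$ in the bullet of $T$ indexed by $j$ (counting bullets from the right) while the other places it in the bullet indexed by $j'$; these are two distinct $h$-permissible positions, so the two resulting Level~$i$ tableaux differ in the position of $i$ relative to $1,\dots,i-1$. I would then invoke the elementary fact that inserting a value larger than all present entries into a one-row filling never reorders the entries already there, so the left-to-right order of $\{1,\dots,i\}$ is unchanged from Level~$i$ down to Level~$n$ along each branch. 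Since these orders already disagree at Level~$i$, the two Level~$n$ fillings are distinct.

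I expect no serious obstacle here: the only delicate bookkeeping is in the second step, where one must check that different bullet choices produce genuinely different relative positions of $i$ and that the subsequent insertions of $i+1,\dots,n$ cannot coincidentally erase this difference, and both points follow at once from the order-preservation of single-row insertion. The substantive content has already been isolated in Lemma~\ref{lem:bullet_correspondence} and in the well-definedness of the $h$-tableau-tree, so this theorem is essentially a careful repackaging of those facts, mirroring the role that Lemma~\ref{lem:dim_ordered_filling_implies_row_strict} played in the Springer setting.
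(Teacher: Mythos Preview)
Your proposal is correct and follows essentially the same approach as the paper's own proof, which dispatches both claims in two sentences: $h$-permissibility is preserved because each insertion of $i$ is made only at an $h$-permissible slot, and distinctness follows from the branching rules. You have simply unpacked these two sentences into an explicit induction and an explicit first-divergence argument, which is entirely appropriate; the only minor imprecision is the phrase ``destroys one old adjacency,'' since the far-right bullet creates a new right-end without splitting any existing adjacency, but this does not affect the argument.
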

\begin{proof}
When going from Level $i-1$ down to $i$, the value $i$ is placed immediately to the left of a number $k\in\{1,\ldots,i-1\}$ only if $i\leq h(k)$.  That is, all fillings in the tree are $h$-permissible and hence the Level $n$ fillings are $\JT$-fillings.  Branching rules ensure all are distinct.
\end{proof}

The proof of Theorem~\ref{thm:number_of_JT-fillings_equals_product_of_betas} relies on combinatorial facts about the two numbers in question, namely the cardinalities of the set of possible $\JT$-fillings and the set of leaves of an $h$-tableau-tree.  The former number is given by the following theorem.

\begin{theorem}[Sommers-Tymoczko~\cite{ST}]\label{thm:Sommers_Tymockzo}
Let $h=(h_1,\ldots,h_n)$ be a Hessenberg function.  The number of $\JT$-fillings of a one-row diagram of shape $(n)$ equals $\prod_{i=1}^n \nu_i$ where $\nu_i=h_i-i+1$.
\end{theorem}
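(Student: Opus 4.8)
The plan is to give a direct bijective proof. Identify an $\JT$-filling of the one-row shape $(n)$ with a word $w=w_1w_2\cdots w_n$ that lists $\{1,\ldots,n\}$ left to right, subject only to the horizontal-adjacency rule $w_i\le h_{w_{i+1}}$ for $1\le i\le n-1$. I would generate every such word by a single insertion procedure: insert the letters $n,n-1,\ldots,1$ in \emph{decreasing} order, so that immediately before $i$ is inserted we have a valid word $u$ on the alphabet $\{i+1,\ldots,n\}$ and we place $i$ in one of the $n-i+1$ gaps of $u$. The goal is to show that at the step inserting $i$ there are exactly $\nu_i=h_i-i+1$ gaps that keep the word valid, and that the resulting map from choice-sequences to $\JT$-fillings is a bijection; the multiplication principle then gives the count $\prod_{i=1}^n\nu_i$.

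First I would carry out the counting step. Inserting $i$ between a left neighbour $a$ and a right neighbour $b$ (either possibly absent at an end) creates the two new adjacencies $a\,i$ and $i\,b$ and destroys the old adjacency $a\,b$; destroying an adjacency never violates the rule, and every other adjacency is untouched. The constraint from $i\,b$ is $i\le h_b$, which holds automatically because $b\ge i+1$ forces $h_b\ge b>i$. Hence a gap is admissible exactly when it is the leftmost gap, or its left neighbour $a$ satisfies $a\le h_i$. Since $u$ uses precisely the letters $\{i+1,\ldots,n\}$ and $h_i\le n$, the admissible left neighbours are the letters in $\{i+1,\ldots,h_i\}$, of which there are $h_i-i$; together with the leftmost gap this yields $h_i-i+1=\nu_i$ admissible positions, regardless of which word $u$ we have reached. (When $i=n$ we insert into the empty word and $\nu_n=1$.)

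The heart of the argument is showing that every $\JT$-filling $w$ of $(n)$ is produced exactly once, equivalently that running the procedure backwards --- deleting the letters $1,2,\ldots,n$ from $w$ in \emph{increasing} order --- stays inside the class of valid words at every step. When the current minimum letter $c$ is deleted, with left neighbour $a$ and right neighbour $b$, validity before deletion gives $a\le h_c$ and $c\le h_b$, while minimality of $c$ gives $c<b$; by the Hessenberg monotonicity $h_c\le h_b$, so $a\le h_c\le h_b$ and the new adjacency $a\,b$ is permissible, while all other adjacencies are untouched. Iterating, the restriction of $w$ to each upper interval $\{i,\ldots,n\}$ is again a valid word, which is precisely the intermediate word the forward procedure passes through; since the forward step from that word to the next is one of the $\nu_i$ admissible insertions (and this one is valid), the sequence of choices reconstructing $w$ is forced and well-defined. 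Combining the two steps, the number of $\JT$-fillings of $(n)$ is $\prod_{i=1}^n\nu_i$. The main obstacle is exactly this deletion-preserves-validity claim, and it is the only place the hypothesis $h_i\le h_{i+1}$ is genuinely needed: removing a \emph{small} letter cannot manufacture a bad adjacency, whereas removing a large one could, which is what forces the procedure to process the letters in the order dictated by the monotonicity of $h$.
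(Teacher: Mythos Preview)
Your proof is correct and self-contained. The paper, by contrast, does not prove this theorem at all: it simply cites Sommers--Tymoczko and remarks that the statement follows from their Theorem~10.2 together with their definition of ideal exponents.

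It is worth noting that your insertion argument is a mirror image of the paper's own machinery elsewhere. In Lemma~\ref{lem:bullet_correspondence} the paper inserts $1,2,\ldots,n$ in \emph{increasing} order and shows that the number of $h$-permissible positions for $i$ is $\beta_i = i - \#\{h_k : h_k < i\}$; the $h$-tableau-tree then has $\prod_i \beta_i$ leaves. To reconcile this with the cited count $\prod_i \nu_i$, the paper needs a separate argument (Theorem~\ref{thm:number_of_JT-fillings_equals_product_of_betas}) that the multisets $\{\nu_i\}$ and $\{\beta_i\}$ coincide, proved via Hessenberg diagrams. Your choice to insert in \emph{decreasing} order lands directly on $\nu_i = h_i - i + 1$ without the detour through $\beta_i$, because the admissibility of a gap is governed by the condition $a \le h_i$ on the \emph{left} neighbour, which picks out exactly the letters in $\{i+1,\ldots,h_i\}$. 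The deletion-closure step, where you invoke monotonicity of $h$ to show $a \le h_c \le h_b$, is the clean replacement for the paper's multiset bijection. What you lose is the explicit identification of the $\beta_i$ as row-lengths in the Hessenberg diagram, which the paper uses later; what you gain is a one-shot elementary proof of the theorem as stated.
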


\begin{proof}[Proof remark]
The notation and terminology in the statement of this theorem differ much from the source~\cite{ST}.  Proof of this theorem arises from considering their Theorem~10.2 along with their definition of ideal exponents given in Definition~3.2.
\end{proof}

Fix a Hessenberg function $h=(h_1,\ldots,h_n)$.  Let $A_h$ denote the multiset $A_h:=\{\nu_i\}_{i=1}^n$.  Proposition~\ref{thm:leaves_remarks_and_R/J_basis_from_tree} shows that the number of leaves of the $h$-tree (and consequently of the $h$-tableau-tree) is $\prod_{i=1}^n \beta_i$ where each $\beta_i$ equals $i - \#\{h_k < i \}$.  Let $B_h$ denote the multiset $B_h:=\{\beta_i \}_{i=1}^n$.  We remind the reader that in a multiset order is ignored, but multiplicity matters.  For example, $\{1,2,3\}=\{2,1,3\}$ but $\{1,1,2\}\neq\{1,2\}$.  Before we prove Theorem~\ref{thm:number_of_JT-fillings_equals_product_of_betas}, we define a pictorial representation of a Hessenberg function that gives a visual manner in which to compute the degree tuple corresponding to a Hessenberg function.

\begin{definition}[Hessenberg diagram]\label{def:Hess_diagram}
Let $h=(h_1,\ldots,h_n)$ be a Hessenberg function.  We may represent $h$ pictorially by an $n$-by-$n$ grid of boxes where we shade the top $h_i$ boxes of column $i$, reading the columns left to right.  The constraints on $h$ force:
\begin{eqnarray*}
&(i)& i\leq h_i \; \Longrightarrow \mbox{ All shaded boxes in a column include the diagonal.}\\
&(ii)& h_i \leq h_{i+1} \; \Longrightarrow \mbox{ Every box to the right of any shaded box is also shaded.}
\end{eqnarray*}
Remove the strictly upper triangular subdiagram from this $h$-shading.  We call this the \textit{Hessenberg diagram} corresponding to $h$.
\end{definition}

\begin{example}\label{exam:h_334456}
Let $h=(3,3,4,4,5,6)$.  Then we have the following Hessenberg diagram:

\begin{center}\setlength{\unitlength}{.2in}
\begin{picture}(6,6)(0,0)
\linethickness{.2pt}
\put(0,0){\line(1,0){6}}
\put(0,1){\line(1,0){6}}
\put(0,2){\line(1,0){5}}
\put(0,3){\line(1,0){4}}
\put(0,4){\line(1,0){3}}
\put(0,5){\line(1,0){2}}
\put(0,6){\line(1,0){1}}
\put(0,6){\line(1,0){6}}
\put(6,6){\line(0,-1){6}}
\put(0,0){\line(0,1){6}}
\put(1,6){\line(0,-1){6}}
\put(2,5){\line(0,-1){5}}
\put(3,4){\line(0,-1){4}}
\put(4,3){\line(0,-1){3}}
\put(5,2){\line(0,-1){2}}
\multiput(.5,5.5)(0,-1){3}{\makebox(0,0){$\blacksquare$}}
\multiput(1.5,4.5)(0,-1){2}{\makebox(0,0){$\blacksquare$}}
\multiput(2.5,3.5)(0,-1){2}{\makebox(0,0){$\blacksquare$}}
\multiput(3.5,2.5)(0,-1){1}{\makebox(0,0){$\blacksquare$}}
\multiput(4.5,1.5)(0,-1){1}{\makebox(0,0){$\blacksquare$}}
\multiput(5.5,.5)(0,-1){1}{\makebox(0,0){$\blacksquare$}}
\put(.15,6.1){\begin{scriptsize}$h_1$\end{scriptsize}}\put(1.15,6.1){\begin{scriptsize}$h_2$\end{scriptsize}}
\put(2.15,6.1){\begin{scriptsize}$h_3$\end{scriptsize}}\put(3.15,6.1){\begin{scriptsize}$h_4$\end{scriptsize}}
\put(4.15,6.1){\begin{scriptsize}$h_5$\end{scriptsize}}\put(5.15,6.1){\begin{scriptsize}$h_6$\end{scriptsize}}
\put(-.7,.25){\begin{scriptsize}$\beta_6$\end{scriptsize}}\put(-.7,1.25){\begin{scriptsize}$\beta_5$\end{scriptsize}}
\put(-.7,2.25){\begin{scriptsize}$\beta_5$\end{scriptsize}}\put(-.7,3.25){\begin{scriptsize}$\beta_3$\end{scriptsize}}
\put(-.7,4.25){\begin{scriptsize}$\beta_2$\end{scriptsize}}\put(-.7,5.25){\begin{scriptsize}$\beta_1$\end{scriptsize}}
\end{picture}~.
\end{center}
Columns are read from left to right, and rows are read from top to bottom.  Visually, we see the value of $h_i$ is $i-1$ plus the number of shaded boxes in column $i$.  Furthermore, $\beta_i$ is the number of shaded boxes in row $i$.  In fact, the number of shaded boxes in row $i$ equals $i$ minus the number of columns left of column $i$ whose shaded boxes do not reach the $i^{th}$ row---namely, the value $i - \#\{h_k | h_k < i\}$.  This is exactly the degree tuple entry $\beta_i$ as defined in Definition~\ref{def:beta_tuple}.  Thus the degree tuple corresponding to $h$ is $\beta = (1,1,2,3,2,1)$, reminding the reader that the tuple $\beta$ by convention is written as $(\beta_6,\beta_5,\ldots,\beta_1)$.
\end{example}

\begin{theorem}\label{thm:number_of_JT-fillings_equals_product_of_betas}
The number of $\JT$-fillings equals the number of leaves in the $h$-tableau-tree.
\end{theorem}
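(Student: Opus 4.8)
The plan is to reduce this statement to the purely numerical identity $\prod_{i=1}^n \nu_i = \prod_{i=1}^n \beta_i$. Recall that throughout this section $\mu$ is the one-row shape $(n)$. By Theorem~\ref{thm:Sommers_Tymockzo} the number of $\JT$-fillings is $\prod_{i=1}^n \nu_i$, and by Proposition~\ref{thm:leaves_remarks_and_R/J_basis_from_tree} the number of leaves of the $h$-tree is $\prod_{i=1}^n \beta_i$; since the $h$-tableau-tree has the same underlying graph as the $h$-tree, it has the same number of leaves. So it suffices to prove $\prod \nu_i = \prod \beta_i$, and I would establish the stronger fact that $A_h = B_h$ as multisets, which the author's notation clearly anticipates.

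To prove $A_h = B_h$ it is enough to show that for every integer $v \geq 1$ one has $\#\{\, i : \nu_i \geq v \,\} = \#\{\, j : \beta_j \geq v \,\}$, as this forces the two multisets to coincide. Here I would argue inside the Hessenberg diagram of $h$ from Definition~\ref{def:Hess_diagram}: writing $D$ for the shaded region, column $i$ of $D$ consists of the cells in rows $i, i+1, \ldots, h_i$, so it contains $\nu_i = h_i - i + 1$ cells and its top cell lies on the main diagonal; and (using $i \leq h_i$ together with $h_i \leq h_{i+1}$) each row of $D$ is an interval of cells whose rightmost cell lies on the main diagonal, row $j$ containing exactly $\beta_j$ cells, as computed in Example~\ref{exam:h_334456}. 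The key observation is that column $i$ has at least $v$ cells exactly when the cell $(i+v-1,\, i)$ lies in $D$, and row $r$ has at least $v$ cells exactly when the cell $(r,\, r-v+1)$ lies in $D$; both of these cells lie on the $(v-1)$-st diagonal below the main diagonal, and as the running index varies they range over exactly the cells of that diagonal inside the lower triangle. Hence both $\#\{\, i : \nu_i \geq v \,\}$ and $\#\{\, j : \beta_j \geq v \,\}$ equal the number of shaded cells of $D$ on that diagonal, so in particular they are equal. Equivalently, and without pictures, the map $i \mapsto i + v - 1$ is a bijection from $\{\, i : h_i \geq i+v-1 \,\}$ onto $\{\, j : \beta_j \geq v \,\}$; this follows formally from $h$ being nondecreasing, since then $\{\, k < j : h_k \geq j \,\}$ is a suffix of $\{1, \ldots, j-1\}$ of size $\beta_j - 1$.

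Once $A_h = B_h$ is known, sorting both multisets shows $\prod \nu_i = \prod \beta_i$, so the number of $\JT$-fillings equals the number of leaves, completing the proof. The only genuine content is the multiset identity $A_h = B_h$; everything else is the two cited results together with the observation that the $h$-tree and the $h$-tableau-tree share the same leaves. I expect the step requiring the most care to be verifying that the rows and columns of $D$ are ``justified to the diagonal'' (or, in the arithmetic reformulation, that $\{\, k < j : h_k \geq j \,\}$ is a suffix of the appropriate size), since this is precisely where both Hessenberg conditions $i \leq h_i$ and $h_i \leq h_{i+1}$ enter, and without them the diagonal-sweeping bijection would break down.
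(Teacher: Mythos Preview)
Your proposal is correct and, at the top level, follows the paper's reduction exactly: invoke Theorem~\ref{thm:Sommers_Tymockzo} and Proposition~\ref{thm:leaves_remarks_and_R/J_basis_from_tree} to reduce to showing that the multisets $A_h=\{\nu_i\}$ and $B_h=\{\beta_i\}$ coincide, and then work inside the Hessenberg diagram of Definition~\ref{def:Hess_diagram}. Where you diverge is in the proof of $A_h=B_h$. The paper argues by induction on the Hessenberg function: for the minimal $h=(1,2,\ldots,n)$ both multisets are $\{1,\ldots,1\}$, and adding a single shaded box at position $(i_0,j_0)$ increases exactly one column length $\nu_{j_0}$ and one row length $\beta_{i_0}$, each by $1$, and a short computation shows $\tilde\nu_{j_0}=\tilde\beta_{i_0}$. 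Your argument is a direct, non-inductive one: for each $v\ge 1$ you identify both $\#\{i:\nu_i\ge v\}$ and $\#\{j:\beta_j\ge v\}$ with the number of shaded cells on the $(v-1)$-st subdiagonal, using that rows and columns of the diagram are intervals ending on the main diagonal. Your route is cleaner and more conceptual, giving a bijection $i\mapsto i+v-1$ in one line once the ``justified to the diagonal'' observation is made; the paper's induction is more hands-on but avoids needing that observation for rows. Either way the same two Hessenberg conditions $i\le h_i$ and $h_i\le h_{i+1}$ do the work, and the step you flagged as needing the most care is exactly where they enter.
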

\begin{proof}
Let $h=(h_1,\ldots,h_n)$ be a Hessenberg function.  It suffices to show the multisets $A_h$ and $B_h$ are equal.  Represent the function $h$ pictorially by its corresponding Hessenberg diagram.  We may view the elements of $A_h$ as the following vector difference: $$(\nu_i)_{i=1}^n = (h_1,\ldots,h_n) - (0,1,\ldots,n-1).$$
So $\nu_i$ equals the number of shaded boxes on or below the diagonal in column $i$.  Regarding the multiset $B_h$, observe that each element $\beta_i$ is the number of shaded boxes on or left of the diagonal in row $i$ (as noted in the remark in Example~\ref{exam:h_334456}).  Thus it suffices to show each column length $\nu_i$ corresponds to exactly one row length $\beta_j$.  We induct on the Hessenberg function.

Consider the minimal Hessenberg function $h=(1,2,\ldots,n)$. This gives the following Hessenberg diagram:
\medskip
\begin{center}\setlength{\unitlength}{.15in}
\begin{picture}(6,6)(0,0)
\linethickness{.2pt}
\put(0,0){\line(1,0){6}}
\put(0,1){\line(1,0){6}}
\put(0,2){\line(1,0){5}}
\put(0,3){\line(1,0){4}}
\put(0,4){\line(1,0){3}}
\put(0,5){\line(1,0){2}}
\put(0,6){\line(1,0){1}}
\put(0,6){\line(1,0){6}}
\put(6,6){\line(0,-1){6}}
\put(0,0){\line(0,1){6}}
\put(1,6){\line(0,-1){6}}
\put(2,5){\line(0,-1){5}}
\put(3,4){\line(0,-1){4}}
\put(4,3){\line(0,-1){3}}
\put(5,2){\line(0,-1){2}}
\thicklines
\multiput(.5,5.5)(0,-1){1}{\makebox(0,0){$\blacksquare$}}
\multiput(1.5,4.5)(0,-1){1}{\makebox(0,0){$\blacksquare$}}
\multiput(2.5,3.5)(0,-1){1}{\makebox(0,0){$\blacksquare$}}
\multiput(3.5,2.5)(0,-1){1}{\makebox(0,0){$\blacksquare$}}
\multiput(4.5,1.5)(0,-1){1}{\makebox(0,0){$\blacksquare$}}
\multiput(5.5,.5)(0,-1){1}{\makebox(0,0){$\blacksquare$}}
\multiput(0,6)(1,-1){6}{\line(0,-1){1}}
\multiput(0,5)(1,-1){6}{\line(1,0){1}}
\end{picture}~.
\end{center}
Each shaded box contributes to both a $\nu_i$ and a $\beta_i$ of length 1.  It follows that $A_h=B_h=\{1,1,\ldots,1\}$, proving the base case holds.

Assume for some fixed Hessenberg function $h=(h_1,\ldots,h_n)$ that $A_h=B_h$.  Add a shaded box to its Hessenberg diagram in a position $(i_0,j_0)$ so that the new function $\tilde{h}=(h_1,\ldots,h_{j_0-1},i_0,h_{j_0+1},\ldots,h_n)$ is a Hessenberg function, namely so $i_0 \leq h_{j_0+1}$.  We claim that the multisets $A_{\tilde{h}} = \{\tilde{\nu}_i\}_{i=1}^n$ and $B_{\tilde{h}} = \{\tilde{\beta_i} \}_{i=1}^n$ coincide.

Every box above $(i_0,j_0)$ in column $j_0$ must be shaded, up to the shaded diagonal box $(j_0,j_0)$.  This shaded column length is $\tilde{\nu}_{j_0}$.  And since $\tilde{h}$ is a Hessenberg function, every box to the right of $(i_0,j_0)$ is shaded up to the shaded diagonal box $(i_0,i_0)$.  This shaded row length is $\tilde{\beta}_{i_0}$.  No other box in row $i_0$ or column $j_0$ below the diagonal is shaded because $h$ is a Hessenberg function.  Clearly,
$$\tilde{\nu}_{j_0} = \nu_{j_0}+1 = (h_{j_0} - j_0 + 1)+1 = h_{j_0} - j_0 + 2.$$
The value $\tilde{\beta}_{i_0}$ is just the number of boxes in row $i_0$ from the position $(i_0,j_0)$ to the diagonal $(i_0,i_0)$ which we count is $i_0-j_0+1$.  Observe $i_0=h_{j_0}+1$ implies that $h_{j_0}+2 = i_0+1$.  Hence $h_{j_0}-j_0+2 = i_0-j_0+1$.  We conclude $\tilde{\nu}_{j_0} = \tilde{\beta}_{i_0}$, and the claim holds since

\begin{center}
\begin{tabular}{ll}
(1) & $\nu_{j_0} = \beta_{i_0}$ necessarily in the original Hessenberg diagram for $h$,\\
(2) & $\nu_{j_0}$ and $\beta_{i_0}$ both increase by 1 in the new Hessenberg diagram for $\tilde{h}$, and\\
(3) & no other $\nu_i$ or $\beta_j$ in the original diagram for $h$ will change in the diagram for $\tilde{h}$.
\end{tabular}
\end{center}
This completes the induction step, and we conclude that the multisets $A_h$ and $B_h$ are equal for all $h$.
\end{proof}

\begin{example}[Clarifying example for the induction step above]\label{exam:clarifying_h_diagram}
Let $h$ be the Hessenberg function $(3,3,4,4,5,6)$.  The corresponding Hessenberg diagram is
\medskip
\begin{center}\setlength{\unitlength}{.15in}
\begin{picture}(6,6)(0,0)
\linethickness{.2pt}
\put(0,0){\line(1,0){6}}
\put(0,1){\line(1,0){6}}
\put(0,2){\line(1,0){5}}
\put(0,3){\line(1,0){4}}
\put(0,4){\line(1,0){3}}
\put(0,5){\line(1,0){2}}
\put(0,6){\line(1,0){1}}
\put(0,6){\line(1,0){6}}
\put(6,6){\line(0,-1){6}}
\put(0,0){\line(0,1){6}}
\put(1,6){\line(0,-1){6}}
\put(2,5){\line(0,-1){5}}
\put(3,4){\line(0,-1){4}}
\put(4,3){\line(0,-1){3}}
\put(5,2){\line(0,-1){2}}
\thicklines
\multiput(.5,5.5)(0,-1){3}{\makebox(0,0){$\blacksquare$}}
\multiput(1.5,4.5)(0,-1){2}{\makebox(0,0){$\blacksquare$}}
\multiput(2.5,3.5)(0,-1){2}{\makebox(0,0){$\blacksquare$}}
\multiput(3.5,2.5)(0,-1){1}{\makebox(0,0){$\blacksquare$}}
\multiput(4.5,1.5)(0,-1){1}{\makebox(0,0){$\blacksquare$}}
\multiput(5.5,.5)(0,-1){1}{\makebox(0,0){$\blacksquare$}}
\put(0,6){\line(0,-1){3}}
\put(0,3){\line(1,0){2}}
\put(2,3){\line(0,-1){1}}
\put(2,2){\line(1,0){2}}
\put(4,2){\line(0,-1){1}}
\put(4,1){\line(1,0){1}}
\put(5,1){\line(0,-1){1}}
\put(5,0){\line(1,0){1}}
\end{picture}~.
\end{center}
In this example $A=\{3,2,2,1,1,1\}$ and $B=\{1,2,3,2,1,1\}$ reading the column lengths from left to right and row lengths from top to bottom, respectively.  At the induction step in the proof above, there are only three legal places to add a box: the positions $(4,2)$, $(5,4)$, or $(6,5)$.  Adding the $(4,2)$-box, for instance, changes $\nu_2$ from 2 to 3 and changes $\beta_4$ from 2 to 3 also.  Moreover, adding the $(4,2)$-box did not affect any other $\nu_i$ or $\beta_j$ values in $A_h$ or $B_h$ respectively.
\end{example}

\begin{corollary}\label{cor:all_possible_JT_fillings}
The Level $n$ fillings of the $h$-tableau tree are all possible $\JT$-fillings.
\end{corollary}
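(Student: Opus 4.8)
The plan is a short counting argument that leans entirely on results already in place. First I would invoke Theorem~\ref{thm:Level_n_fillings_are_JT} to record two things: every filling occurring at Level $n$ of the $h$-tableau-tree is an $\JT$-filling, and these fillings are pairwise distinct. The distinctness is immediate from the injective branching built into the $h$-tree (Definition~\ref{def:h_tab_tree}) together with the barless-tableau labelling of Definition~\ref{def:h_tab_tree}: two different root-to-Level-$n$ paths cannot terminate at the same barless tableau, and hence not at the same filling once the bars are inserted.

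Next I would count the Level $n$ fillings. Each vertex at Level $n$ is joined by a single edge (labelled $1$) to a unique leaf at Level $n+1$, so the number of Level $n$ fillings equals the number of leaves of the tree, which Proposition~\ref{thm:leaves_remarks_and_R/J_basis_from_tree} identifies as $\prod_{i=1}^n \beta_i$. Theorem~\ref{thm:number_of_JT-fillings_equals_product_of_betas} then tells us that this product is precisely the total number of $\JT$-fillings of the one-row shape $\mu=(n)$ under consideration in this section.

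The conclusion is then formal: the Level $n$ fillings form a set of pairwise-distinct $\JT$-fillings whose cardinality equals the cardinality of the (finite) set of all $\JT$-fillings, so the two sets must coincide. I do not expect any genuine obstacle here; the substantive combinatorics — the multiset identity $A_h = B_h$ — was already carried out inside the proof of Theorem~\ref{thm:number_of_JT-fillings_equals_product_of_betas}. The only point deserving an explicit sentence is the pairwise-distinctness of the Level $n$ fillings, which is exactly the injective-branching clause of the construction; everything else is bookkeeping. (Note that the companion fact that $\Phi$ sends each Level $n$ filling to the monomial of $\Bh$ directly below it is a separate assertion, item (4) in the list preceding Theorem~\ref{thm:Level_n_fillings_are_JT}, and is not needed for this corollary.)
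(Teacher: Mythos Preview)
Your proposal is correct and follows essentially the same counting argument as the paper: distinctness of the Level~$n$ fillings via Theorem~\ref{thm:Level_n_fillings_are_JT}, then equality of cardinalities via Theorem~\ref{thm:number_of_JT-fillings_equals_product_of_betas}. The paper's version also cites Theorem~\ref{thm:Sommers_Tymockzo} explicitly at this point, whereas you route the count through Proposition~\ref{thm:leaves_remarks_and_R/J_basis_from_tree}; both packages amount to the same two-line pigeonhole, so there is no substantive difference.
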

\begin{proof}
Level $n$ fillings are distinct $\JT$-fillings by Theorem~\ref{thm:Level_n_fillings_are_JT}.  The claim follows immediately from the previous theorem together with Theorem~\ref{thm:Sommers_Tymockzo} of Sommers-Tymoczko.
\end{proof}

We now introduce a lemma similar to Lemma~\ref{lem:why_this_box_exists} from the Springer setting.  This will be useful in building the inverse map $\Psi_h$.

\begin{lemma}\label{why_this_bullet_exists}
Fix $n$ and let $h$ be an arbitrary Hessenberg function.  Let $\X \in \Bh$, and consider the $h$-tableau-tree corresponding to $h$.  Then
\begin{enumerate}[(i)]
	\item The monomial $\X$ is of the form $x_2^{\alpha_2}\cdots x_n^{\alpha_n}$.  That is, no monomial in $\Bh$ contains the variable $x_1$.
	\item Every barless tableau at Level $i-1$ has at least $\alpha_i+1$ bullet positions available.
\end{enumerate}
\end{lemma}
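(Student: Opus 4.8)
The plan is to prove the two parts separately, with part (i) being essentially immediate and part (ii) requiring the combinatorial correspondence already established in this section. For part (i), recall from Theorem~\ref{thm:Basis_for_R/J} that $\Bh$ consists of monomials $x_1^{\alpha_1}\cdots x_n^{\alpha_n}$ with $0\leq\alpha_i\leq\beta_i-1$. Since $h$ is a Hessenberg function, $h_1\geq 1$, and so by Definition~\ref{def:beta_tuple} we have $\beta_1 = 1 - \#\{h_k\mid h_k<1\} = 1 - 0 = 1$. Hence $\alpha_1\leq\beta_1-1=0$, forcing $\alpha_1=0$, so $x_1$ never appears. (This is the analogue of Remark~\ref{rem:aBa-monoms_have_no_1} and can also be seen from the $h$-tableau-tree directly: the edges leaving Level~1 are labelled with powers of $x_2$ only, never $x_1$.)

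For part (ii), I would argue by tracing the path in the $h$-tableau-tree that produces $\X$ as its leaf label. Since $\X\in\Bh$, by Proposition~\ref{thm:leaves_remarks_and_R/J_basis_from_tree}(2) there is a unique leaf at Level $n+1$ labelled $\X$, hence a unique path from the root down to it. Along this path, the edge descending from Level $i-1$ to Level $i$ carries the label $x_i^j$ for some $j$ with $0\leq j\leq\beta_i-1$, and by the rule in Definition~\ref{def:h_tab_tree} this edge corresponds to replacing the $(j+1)^{\text{th}}$ bullet (counting right to left) with the value $i$. Since the product of all edge labels on the path is $\X = x_2^{\alpha_2}\cdots x_n^{\alpha_n}$, the exponent of $x_i$ contributed on this path is exactly $\alpha_i$, i.e. the edge from Level $i-1$ to Level $i$ is labelled $x_i^{\alpha_i}$. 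For this edge to exist in the tree, the barless tableau at Level $i-1$ must have an $(\alpha_i+1)^{\text{th}}$ bullet position available — that is, at least $\alpha_i+1$ bullet positions. But Lemma~\ref{lem:bullet_correspondence} tells us precisely that a barless tableau filled with $1,\ldots,i-1$ has exactly $\beta_i$ $h$-permissible positions for $i$, hence exactly $\beta_i$ bullets; and since $\alpha_i\leq\beta_i-1$, we have $\alpha_i+1\leq\beta_i$, so such a position indeed exists. This establishes (ii).

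I expect the only subtlety — and hence the main thing to state carefully — is the justification that every barless tableau appearing at Level $i-1$ of the tree really is a legitimately-filled barless tableau containing exactly $1,\ldots,i-1$ in $h$-permissible positions, so that Lemma~\ref{lem:bullet_correspondence} applies uniformly to all of them, not merely to the particular one on the path producing $\X$. This follows by an easy induction on the level, using the inductive structure in Definition~\ref{def:h_tab_tree}: Level~1 has the single barless tableau containing just $1$, and if a Level $i-1$ tableau is $h$-permissibly filled with $1,\ldots,i-1$, then each of its $\beta_i$ children at Level $i$ is obtained by inserting $i$ into an $h$-permissible position, which by construction keeps all horizontal adjacencies $h$-permissible. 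Once this bookkeeping is in place, parts (i) and (ii) both drop out as described, and no genuinely hard step remains.
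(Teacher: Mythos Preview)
Your proposal is correct and follows essentially the same approach as the paper: for (i) you use Theorem~\ref{thm:Basis_for_R/J} together with $\beta_1=1$, and for (ii) you invoke Lemma~\ref{lem:bullet_correspondence} to get exactly $\beta_i$ bullets at Level~$i-1$ and then use $\alpha_i+1\leq\beta_i$. The paper's proof is just the terse version of this; your additional discussion of the path to $\X$ and the inductive check that every Level~$i-1$ tableau is $h$-permissibly filled is extra bookkeeping that the paper leaves implicit, but it is not a different argument.
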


\begin{proof}
Let $h=(h_1,\ldots,h_n)$ be a Hessenberg function and $\beta = (\beta_n,\beta_{n-1},\ldots,\beta_1)$ be its corresponding degree tuple .  Let $\X \in \Bh$.  By Theorem~\ref{thm:Basis_for_R/J}, $\X$ is of the form $x_1^{\alpha_1}x_2^{\alpha_2}\cdots x_n^{\alpha_n}$ where each $\alpha_i$ satisfies $0\leq\alpha_i\leq\beta_i-1$.  Since $\beta_1=1$ by definition, we have $\alpha_1=0$ for all $h$, proving ($i$).  Lemma~\ref{lem:bullet_correspondence} ensures that a Level $i-1$ barless tableau will have $\beta_i$ bullets.  Since $\alpha_i+1\leq\beta_i$, this proves ($ii$).
\end{proof}

\begin{theorem}[A map from $\Bh$ to $\JT$-fillings]\label{thm:Map_from_Bh_to_fillings}
Given a Hessenberg function $h$ and the shape $\mu=(n)$, there exists a well-defined dimension-preserving map $\Psi_h$ from the monomials $\Bh$ to the set of $\JT$-fillings.  That is, degree-$r$ monomials in $\Bh$ map to $r$-dimensional $\JT$-fillings.  Moreover the composition
$$\Bh \; \stackrel{\Psi_h}{\longrightarrow} \; \{\JT\mbox{-fillings}\} \; \stackrel{\Phi}{\longrightarrow} \; \Bh$$ is the identity on $\Bh$.
\end{theorem}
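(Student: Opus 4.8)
The plan is to mimic the proof of Theorem~\ref{thm:GP_to_Tab}, with the $h$-tableau-tree playing the role of the modified GP-tree. First I would \emph{define} $\Psi_h$ by the insertion algorithm read off from the $h$-tableau-tree. Given $\X\in\Bh$, recall from the first part of Lemma~\ref{why_this_bullet_exists} that $\X=x_2^{\alpha_2}\cdots x_n^{\alpha_n}$ with $0\leq\alpha_i\leq\beta_i-1$. Start from the barless tableau containing only the entry $1$; for $i=2,3,\ldots,n$ in turn, mark the $h$-permissible positions for $i$ with bullets and insert $i$ into the $(\alpha_i+1)^{th}$ bullet counting from right to left; after $i=n$, insert the bars. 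Call the resulting filling of $\mu=(n)$ the tableau $T:=\Psi_h(\X)$. Well-definedness of each step is immediate: Lemma~\ref{lem:bullet_correspondence} guarantees exactly $\beta_i$ bullets at stage $i$, and $\alpha_i+1\leq\beta_i$ by the second part of Lemma~\ref{why_this_bullet_exists}, so the required bullet always exists; and since every insertion is into an $h$-permissible position, $T$ is an $\JT$-filling.

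The heart of the proof is to show $\Phi(T)=\X$, that is, $|\D_i|=\alpha_i$ for every $i\in\{2,\ldots,n\}$, and this I would establish with two claims parallel to Lemma~\ref{lem:dim_order_filling_gives_these_many_dimpairs} and Lemma~\ref{lem:CompositionDP_equals_TabloidDP}. Let $T^{(i)}$ be the barless tableau filled with $1,\ldots,i$ obtained at stage $i$. First I claim $|\DP^{T^{(i)}}_i|=\alpha_i$: writing the entries strictly to the right of $i$ in $T^{(i)}$ as $a_1,\ldots,a_m$ from left to right, one checks that $(a_\ell,i)$ is a dimension pair precisely when $\ell=m$ or $i\leq h_{a_{\ell+1}}$, and that the $h$-permissible positions for $i$ lying strictly to the right of $i$'s actual slot are precisely the far-right slot together with the slots immediately left of those $a_{\ell+1}$ with $i\leq h_{a_{\ell+1}}$; these two sets have the same cardinality, namely the number $\alpha_i$ of $h$-permissible slots to the right of the $(\alpha_i+1)^{th}$ one. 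Second I claim $|\D_i|=|\DP^{T^{(i)}}_i|$, proved exactly as in Lemma~\ref{lem:CompositionDP_equals_TabloidDP}: restoring a value $v>i$ never destroys a pair $(a,i)$, since the only way it could change the right neighbour of $a$ is by making that neighbour equal to some $v>i$, and then $h_v\geq v>i$ keeps condition (3) of Definition~\ref{def:dimension_pair_(a,b)} satisfied; and it never creates a new pair $(a,i)$, because if a value $a<i$ lying to the right of $i$ fails to pair with $i$ in $T^{(i)}$ then its right neighbour $d$ there satisfies $h_d<i$, and this persists to the end --- inserting anything immediately left of $d$ would require a value $v\leq h_d<i$, impossible since only values exceeding $i$ remain to be placed. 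Combining the two claims gives $|\D_i|=\alpha_i$, hence $\Phi(\Psi_h(\X))=\X$ and $\Phi\circ\Psi_h=\mathrm{id}_{\Bh}$.

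Dimension-preservation then follows formally: a degree-$r$ monomial $\X$ has $\sum_i\alpha_i=r$, so $T$ has $\sum_i|\D_i|=r$ dimension pairs and is $r$-dimensional (equivalently, invoke that $\Phi$ is degree-preserving by Theorem~\ref{thm:JT-fillings_to_Monomials} together with $\Phi\circ\Psi_h=\mathrm{id}$). I expect the main obstacle to be the second claim --- carefully tracking how the immediate right neighbour of a box lying to the right of $i$ evolves as the larger values are restored, and arguing that a ``bad'' neighbour can never be displaced; but, just as in the Springer argument, the single observation that every value inserted after $i$ exceeds $i$, and so automatically satisfies $i\leq h_v$, makes both directions go through cleanly. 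Together with the $h$-tableau-tree this then yields $\Ah=\Bh$ and completes the bottom leg of the triangle in Figure~\ref{fig:The_Reg_Nilp_Triangle}.
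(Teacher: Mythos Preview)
Your proposal is correct and follows essentially the same approach as the paper: define $\Psi_h$ via the $h$-tableau-tree insertion, use Lemma~\ref{lem:bullet_correspondence} and Lemma~\ref{why_this_bullet_exists} for well-definedness, and verify $|\D_i|=\alpha_i$ by matching bullets to the right of $i$'s slot with dimension pairs and arguing that later insertions of values $v>i$ can neither create nor destroy a pair $(a,i)$ since $h_v\geq v>i$ and no such $v$ can be placed immediately left of a ``non-bullet'' neighbor. The only difference is organizational---you split the count into two claims parallel to Lemmas~\ref{lem:dim_order_filling_gives_these_many_dimpairs} and~\ref{lem:CompositionDP_equals_TabloidDP}, whereas the paper folds both into a single block-structure argument on $T_{k-1}$; the underlying observations are identical.
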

\begin{proof}
Let $\X\in\Bh$ have degree $r$.  Consider the $\JT$-filling $T$ sitting at Level $n$ directly above $\X$.  Define $\Psi_h(\X) := T$.  Lemma~\ref{why_this_bullet_exists} gives that $\X$ has the form $x_2^{\alpha_2}\cdots x_n^{\alpha_n}$.  Also since $\X$ has degree r, it follows that $\alpha_2+\alpha_3+\cdots+\alpha_n$ equals $r$.  It suffices to show that the cardinality of $\D_k$ equals $\alpha_k$ for each $k \in \{2,\ldots,n\}$.  We check this by examining the path on the $h$-tableau-tree from Level 1 down to $\X$ at Level $n+1$.  Fix $k\in\{2,\ldots,n\}$.  Let $T_1,T_2,\ldots,T_{n-1}$ be the barless tableaux on this path at Levels $1,2,\ldots,n-1$ respectively.  At the $(k-1)^{th}$ step in this path, the number $k$ is placed in the $(\alpha_k+1)^{th}$ bullet from the right in $T_{k-1}$.  This bullet exists by Lemma~\ref{why_this_bullet_exists}.  The barless tableau $T_{k-1}$ has the form
$$\setlength{\unitlength}{.15in}\begin{picture}(13,1.5)(0,0)
\linethickness{.25pt}
\multiput(0,0)(0,1.5){2}{\line(1,0){13}}
\put(0,0){\line(0,1){1.5}}
\put(13,0){\line(0,1){1.5}}
\put(.5,.4){$\cdots\bullet C_{\alpha_k}\bullet\cdots\bullet C_2\bullet C_1\bullet$}
\end{picture}$$
where each block $C_i$ is a string of numbers.  The numbers $1,\ldots,k-1$ are distributed without repetition amongst all $C_i$.  We claim there exists exactly one $c_i$ in each $C_i$-block to the right of $k$ in $T_k$ such that $(c_i,k)\in \D_k$.

Since all fillings in an $h$-tableau-tree are $h$-permissible, each block $C_i$ is an ordered string of $\gamma_i$ numbers $c_{i,1}c_{i,2}\cdots c_{i,\gamma_i}$ in $\{1,\ldots,k-1\}$ such that $c_{i,r} \leq h(c_{i,r+1})$ for each $r<\gamma_i$.  We claim that $k$ forms a dimension pair with only the far-right entry $c_{i,\gamma_i}$ of each block $C_i$ to its right.  That is, the value $k$ is not in a dimension pair with any other element of each block $C_i$ to its right.  Recall to be a dimension pair $(c,k) \in \D_k$ in the one-row case, we must have
\begin{enumerate}
 \item[($i$)] $c$ is to the right of $k$ and $k>c$ holds, and
 \item[($ii$)] if there exists a $j$ immediately right of $c$, then $k\leq h(j)$ holds also.
\end{enumerate}
Since $k$ is larger than every entry in the Level $k-1$ barless tableau, condition ($i$) holds.  If there exists no $j$ to the right of $c_{i,\gamma_i}$, then ($ii$) holds vacuously.  If some $j$ is eventually placed immediately right of $c_{i,\gamma_i}$ then $j\geq k+1$.  Thus $k<k+1\leq h(j)$ and so ($ii$) holds.  Lastly, if no $j$ is placed right of $c_{i,\gamma_i}$ and there exists a block $C_{i-1}$ immediately right of $C_i$ in the final tableau $T$, then the element $c_{i-1,1}$ is immediately right of $c_{i,\gamma_i}$.  But $k\leq h(c_{i-1,1})$ since $T_{k-1}$ had a bullet placed left of the block $C_{i-1}$.  Thus in every case, ($ii$) holds and $|\D_k|$ equals $\alpha_k$ as desired.

Hence the map $\Psi_h$ from $\Bh$ to the set of $\JT$-fillings takes degree-$r$ monomials to $r$-dimensional $\JT$-fillings, and  $\Phi\circ\Psi_h$ is the identity on $\Bh$.
\end{proof}

\begin{example}
Fix $h=(2,4,4,5,5)$ and its corresponding $\beta=(2,3,2,2,1)$.  The degree tuple $\beta$ tells us that the monomial $x_2x_4^2x_5$ lies in $\Bh$.  Without drawing the whole $h$-tableau tree, we can construct the unique path that gives the corresponding $\JT$-filling.  Omitting the barless tableau frames, we get
$$\bullet 1\bullet \;\buildrel x_2^1\over\longrightarrow\; \bullet2 1\bullet \;\buildrel x_3^0\over\longrightarrow\; \bullet2 1\bullet 3\bullet \;\buildrel x_4^2\over\longrightarrow\; \bullet4 2 1 3 \bullet \;\buildrel x_5^1\over\longrightarrow\; 54213.$$  Thus $\Psi_h(x_2x_4^2x_5)=T$ where $T$ is the $\JT$-filling \setlength{\unitlength}{.15in}\begin{picture}(5,1)(0,0)
\linethickness{.25pt}
\multiput(0,0)(0,1){2}{\line(1,0){5}}
\multiput(0,0)(1,0){6}{\line(0,1){1}}
\put(.25,.075){5}\put(1.25,.075){4}\put(2.25,.075){2}\put(3.25,.075){1}\put(4.25,.075){3}
\end{picture}~.  Conversely to recover the corresponding monomial from this filling $T$, we calculate the dimension-pairs.  We write all pairs $(ik)$ where $k$ is left of $i$ and $i<k$.  We then eliminate pairs $(ik)$ that do not satisfy the additional dimension pair condition that if $j$ is immediately right of $i$, then $k\leq h(j)$.  We get the following:
$$(12) \in \D_2, \;\;\; (14),\cancel{(24)},(34) \in \D_4, \;\;\; \textrm{and } \; \cancel{(15)},\cancel{(25)},(35),\cancel{(45)} \in \D_5.$$
Thus $\Phi$ takes the filling $T$ to the monomial $x_2x_4^2x_5$ as desired.

In particular, the algorithm for $\Psi_h$ depends on the choice of the Hessenberg function $h$.  If we considered the Hessenberg function $h'=(2,3,5,5,5)$, then the same filling $T= \setlength{\unitlength}{.15in}\begin{picture}(5,1)(0,0)
\linethickness{.25pt}
\multiput(0,0)(0,1){2}{\line(1,0){5}}
\multiput(0,0)(1,0){6}{\line(0,1){1}}
\put(.25,.075){5}\put(1.25,.075){4}\put(2.25,.075){2}\put(3.25,.075){1}\put(4.25,.075){3}
\end{picture}$ would be a permissible filling of $h'$, but now the dimension pair $(15)\in\D_5$ is \textit{not} cancelled since $5\leq h'(3)$.  Thus the map $\Phi$ takes $T$ to the monomial $x_2x_4^2x_5^2$.  Conversely, the inverse map $\Psi_{h'}$ now takes the new degree tuple into account and from this \textit{different} monomial we will get the same $T$ as we had gotten before.  The only thing that changes is the extra bullet before the last arrow:
$$\bullet 1\bullet \;\buildrel x_2^1\over\longrightarrow\; \bullet2 1\bullet \;\buildrel x_3^0\over\longrightarrow\; \bullet2 1\bullet 3\bullet \;\buildrel x_4^2\over\longrightarrow\; \bullet4 2 1 \bullet 3 \bullet \;\buildrel x_5^2\over\longrightarrow\; 54213.$$
\end{example}

\subsection{\texorpdfstring{$\Ah$}{A-h(mu)} coincides with the basis of monomials \texorpdfstring{$\Bh$}{B-h(mu)} for \texorpdfstring{$R/J_h$}{R/Jh} }\label{subsec:Ah_equals_Bh_RegNilpCase}
\begin{corollary}\label{cor:Ah_and_Bh_are_equal}
For a given $h$, the set of monomials $\Ah$ and $\Bh$ are equal.
\end{corollary}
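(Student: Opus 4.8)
The plan is to sandwich $\Ah$ between $\Bh$ and itself, using the two maps $\Phi$ and $\Psi_h$ together with the fact that the $h$-tableau-tree realizes \emph{every} $\JT$-filling at its Level $n$.

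First I would establish the inclusion $\Bh \subseteq \Ah$. Let $\X \in \Bh$. By Theorem~\ref{thm:Map_from_Bh_to_fillings}, the filling $\Psi_h(\X)$ is a genuine $\JT$-filling and $\Phi(\Psi_h(\X)) = \X$. Since $\Ah$ is by definition the image of the set of $\JT$-fillings under $\Phi$, this exhibits $\X$ as an element of $\Ah$, and $\X$ was arbitrary.

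Next I would establish the reverse inclusion $\Ah \subseteq \Bh$. By Theorem~\ref{thm:Level_n_fillings_are_JT} together with Corollary~\ref{cor:all_possible_JT_fillings}, the Level $n$ fillings of the $h$-tableau-tree are precisely all of the $\JT$-fillings. By Proposition~\ref{thm:leaves_remarks_and_R/J_basis_from_tree}, the Level $n+1$ leaf labels are precisely the monomials of $\Bh$. The computation carried out inside the proof of Theorem~\ref{thm:Map_from_Bh_to_fillings} shows that if $T$ is the Level $n$ filling sitting directly above the leaf monomial $\X$ at Level $n+1$, then $\Phi(T) = \X$; in other words, $\Phi$ sends each Level $n$ vertex to the leaf hanging beneath it. Consequently $\Phi$ applied to an arbitrary $\JT$-filling always lands in $\Bh$, which is exactly $\Ah \subseteq \Bh$. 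Combining the two inclusions yields $\Ah = \Bh$.

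Because all of the substantive work has already been deposited in the construction of the $h$-tableau-tree and in Theorems~\ref{thm:number_of_JT-fillings_equals_product_of_betas} and~\ref{thm:Map_from_Bh_to_fillings}, there is no genuine obstacle at this stage; the only point deserving a moment's care is the observation that $\Psi_h(\X)$ is precisely the Level $n$ vertex lying above $\X$, so that the ``descend the tree'' correspondence and the map $\Psi_h$ are mutually inverse on the relevant finite sets — and this is immediate from the definition of $\Psi_h$ given in Theorem~\ref{thm:Map_from_Bh_to_fillings}.
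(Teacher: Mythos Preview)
Your proposal is correct and follows essentially the same route as the paper: both arguments rest on the fact that the Level~$n$ fillings of the $h$-tableau-tree are \emph{all} of the $\JT$-fillings (Theorem~\ref{thm:Level_n_fillings_are_JT} together with Corollary~\ref{cor:all_possible_JT_fillings}) and that $\Phi$ carries each such filling to the monomial hanging below it at Level~$n+1$, whence the image of $\Phi$ equals $\Bh$. The paper simply compresses this into three lines without splitting it into two inclusions, whereas you spell out $\Bh \subseteq \Ah$ and $\Ah \subseteq \Bh$ separately; the underlying logic and the cited results are the same.
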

\begin{proof}
The Level $n$ fillings are $\JT$-fillings by Theorem~\ref{thm:Level_n_fillings_are_JT}.  In fact they are all the possible $\JT$-filings by Corollary~\ref{cor:all_possible_JT_fillings}. Since the image of all $\JT$-fillings under $\Phi$ is $\Ah$, we have $\Ah=\Bh$.
\end{proof}

\begin{corollary}\label{cor:Ah_and_Mmu_isom_vect_spaces}
$\Ah$ and $M^{h,\mu}$ are isomorphic as graded vector spaces.
\end{corollary}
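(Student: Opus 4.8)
The plan is to reprise, in the regular nilpotent setting, the argument used for Corollary~\ref{cor:A_and_M_isom_vect_spaces} in the Springer case. By Theorem~\ref{thm:JT-fillings_to_Monomials} the map $\Phi\colon M^{h,\mu}\to\Ah$ is degree-preserving, and it is surjective because $\Ah$ is by definition its image. Hence it is enough to check that $M^{h,\mu}$ and $\Ah$ have the same (finite) $\mathbb{Q}$-dimension: a degree-preserving surjection between finite-dimensional graded vector spaces of equal dimension is automatically a graded isomorphism.

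First I would compute $\dim_{\mathbb{Q}} M^{h,\mu}$. Since $M^{h,\mu}$ is by definition the formal $\mathbb{Q}$-span of the $\JT$-fillings, its dimension is exactly the number of $\JT$-fillings; and by Corollary~\ref{cor:all_possible_JT_fillings} together with Theorem~\ref{thm:number_of_JT-fillings_equals_product_of_betas}, this equals the number of leaves of the $h$-tableau-tree, which is $\prod_{i=1}^{n}\beta_i$ by Proposition~\ref{thm:leaves_remarks_and_R/J_basis_from_tree}. Next I would compute $\dim_{\mathbb{Q}}\Ah$. The set $\Ah$ consists of distinct monomials, hence is linearly independent, so $\dim_{\mathbb{Q}}\Ah=|\Ah|$; by Corollary~\ref{cor:Ah_and_Bh_are_equal} we have $\Ah=\Bh$, and $\Bh$ is the basis of $R/J_h$ exhibited in Theorem~\ref{thm:Basis_for_R/J}, so $|\Ah|=|\Bh|=\prod_{i=1}^{n}\beta_i$. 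Chaining these equalities gives $\dim_{\mathbb{Q}}M^{h,\mu}=\prod_{i=1}^{n}\beta_i=\dim_{\mathbb{Q}}\Ah$, so $\Phi$ is a graded vector space isomorphism.

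Alternatively, and perhaps more satisfyingly, one can produce the inverse explicitly: extend $\Psi_h$ linearly over $\Ah=\Bh$. Theorem~\ref{thm:Map_from_Bh_to_fillings} already gives $\Phi\circ\Psi_h=\mathrm{id}_{\Ah}$, and the dimension count above forces $\Psi_h\circ\Phi=\mathrm{id}_{M^{h,\mu}}$ as well, so $\Psi_h$ is a two-sided inverse that respects the grading. I do not expect any real obstacle here; the entire content is bookkeeping, namely recognizing that the number of $\JT$-fillings, the cardinality $|\Bh|$, and the number of leaves of the $h$-tableau-tree are all the same product $\prod_{i=1}^{n}\beta_i$, which is precisely what the preceding theorems and proposition supply. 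No new combinatorial idea is needed.
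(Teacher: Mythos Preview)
Your proposal is correct and follows essentially the same approach as the paper's own proof: both use that $\Phi\circ\Psi_h=\mathrm{id}$ on $\Bh=\Ah$ together with the cardinality count $|\Ah|=|\Bh|=\prod_{i=1}^n\beta_i=|M^{h,\mu}|$ to conclude that the degree-preserving surjection $\Phi$ is a graded isomorphism. Your write-up is a bit more explicit about the linear-algebra bookkeeping, but no different idea is involved.
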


\begin{proof}
By Theorem~\ref{thm:Map_from_Bh_to_fillings}, the composition $\Phi\circ\Psi_h$ is the identity on $\Bh$.  Since $\Ah$ equals $\Bh$ and the number of paths in the $h$-tableau-tree is exactly $\prod_{i=1}^n \beta_i = |M^{h,\mu}|$, the cardinality of $\Ah$ equals the cardinality of the generating set of $\JT$-fillings in $M^{h,\mu}$.  Thus $\A$ and $M^{h,\mu}$ are isomorphic as graded vector spaces.
\end{proof}

We are now ready to state the theorem that ties the algebraic view of the $H^*(\mathfrak{H}(X,h))$ with the geometric view of this same cohomology ring.

\begin{theorem}\label{thm:algebraic_view_of_R/J_h}
Let $h=(h_1,\ldots,h_n)$ be a Hessenberg function with corresponding ideal $J_h$.  The generators of the quotient $R/J_h$ are in bijective graded correspondence with the $\JT$-fillings.  In particular, the generators of $R/J_h$ give the Betti numbers of the regular nilpotent Hessenberg varieties.
\end{theorem}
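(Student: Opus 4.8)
The plan is to assemble pieces already established. By ``generators'' of $R/J_h$ we mean its monomial basis: Theorem~\ref{thm:Basis_for_R/J} exhibits $\Bh$ as a $\mathbb{Z}$-module basis of $R/J_h$, and Corollary~\ref{cor:Ah_and_Bh_are_equal} gives $\Ah=\Bh$. First I would invoke Theorem~\ref{thm:Map_from_Bh_to_fillings} for the map $\Psi_h\colon\Bh\to\{\JT\mbox{-fillings}\}$ satisfying $\Phi\circ\Psi_h=\mathrm{id}_{\Bh}$; since $|\{\JT\mbox{-fillings}\}|=\prod_{i=1}^{n}\beta_i=|\Bh|$ by Theorem~\ref{thm:number_of_JT-fillings_equals_product_of_betas} together with Corollary~\ref{cor:all_possible_JT_fillings} and Proposition~\ref{thm:leaves_remarks_and_R/J_basis_from_tree}, the map $\Psi_h$ must be a bijection, with two-sided inverse $\Phi$.

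Next I would track degrees. Theorem~\ref{thm:Map_from_Bh_to_fillings} sends a degree-$r$ monomial of $\Bh$ to an $r$-dimensional $\JT$-filling, and Theorem~\ref{thm:JT-fillings_to_Monomials} sends an $r$-dimensional filling back to a degree-$r$ monomial under $\Phi$. Hence $\Psi_h$ restricts, for each $r\geq 0$, to a bijection between the degree-$r$ monomials of $\Bh$ --- equivalently a homogeneous basis of the degree-$r$ graded piece $(R/J_h)_r$ --- and the $\JT$-fillings with exactly $r$ dimension pairs. This is precisely the claimed bijective graded correspondence between the generators of $R/J_h$ and the $\JT$-fillings.

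Finally, to deduce the Betti number statement I would invoke Tymoczko's theorem: $\dim H^{2r}(\mathfrak{H}(X,h))$ is the number of $\JT$-fillings having $r$ dimension pairs. Chaining this with the graded bijection above shows that the number of degree-$r$ basis elements of $R/J_h$ equals $\dim H^{2r}(\mathfrak{H}(X,h))$ for every $r$; since the paving by affines forces the odd cohomology to vanish, the ranks of the graded pieces of $R/J_h$ are exactly the (even) Betti numbers of the regular nilpotent Hessenberg variety. The argument is essentially bookkeeping; the one point that needs care is that the polynomial grading on $R/J_h$ is the same grading matched to the ``number of dimension pairs'' statistic through $\Phi$ and $\Psi_h$, which is precisely what the degree-preserving clauses of Theorems~\ref{thm:JT-fillings_to_Monomials} and~\ref{thm:Map_from_Bh_to_fillings} supply, so no substantive obstacle remains beyond citing these results in the correct order.
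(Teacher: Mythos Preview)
Your proposal is correct and follows essentially the same approach as the paper's proof: establish a graded bijection between $\Bh$ and the $\JT$-fillings via $\Psi_h$ and $\Phi$, then invoke Tymoczko's theorem for the Betti numbers. The only difference is cosmetic: the paper cites Corollary~\ref{cor:Ah_and_Mmu_isom_vect_spaces} directly, whereas you unpack that corollary into its ingredients (Theorem~\ref{thm:Map_from_Bh_to_fillings}, the cardinality count via Theorem~\ref{thm:number_of_JT-fillings_equals_product_of_betas} and Proposition~\ref{thm:leaves_remarks_and_R/J_basis_from_tree}, and the degree-preserving clauses).
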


\begin{proof}
In Corollary~\ref{cor:Ah_and_Mmu_isom_vect_spaces}, we proved that the map $\Phi$ is a graded vector space isomorphism from $M^{h,\mu}$ to $\Ah$.  In particular it a bijective graded correspondence between the set of $\JT$-fillings and the set of monomials $\Ah$.  By Corollary~\ref{cor:Ah_and_Bh_are_equal}, the sets $\Ah$ and $\Bh$ coincide.  Hence $\Psi_h$ sends the set of generators of degree $i$ in $R/J_h$ to the $i$-dimensional $\JT$-fillings.  By Tymoczko~\cite[Theorem 1.1]{Tym}, the cardinality of the set of $i$-dimensional $\JT$-fillings equals the dimension of the degree-$2i$ part of $H^*(\mathfrak{H}(X,h))$.  Therefore, the degree-$i$ generators of $R/J_h$ give the $2i^{th}$ Betti number of $\mathfrak{H}(X,h)$.
\end{proof}


\section{Tantalizing evidence, elaborative example, future work and questions}\label{sec:evidence}

\subsection{A conjecture and Peterson variety evidence}

\begin{conjecture}
Fix $\mu=(n)$ and let $h$ be a Hessenberg function.  The quotient $R/J_h$ is a presentation for the cohomology ring of the regular nilpotent Hessenberg variety $\mathfrak{H}(X,h)$.  Moreover, this gives the cohomology ring with integer coefficients.
\end{conjecture}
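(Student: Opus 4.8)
\noindent We outline a plan of attack. The additive half of the statement is already settled by the combinatorics of this paper: by Corollaries~\ref{cor:Ah_and_Bh_are_equal} and~\ref{cor:Ah_and_Mmu_isom_vect_spaces} together with Tymoczko's theorem, the rank of the degree-$i$ part of $R/J_h$ equals the number of $\JT$-fillings of dimension $i$, which is the $2i^{\textrm{th}}$ Betti number of $\mathfrak{H}(X,h)$; moreover $R/J_h$ is a free $\mathbb{Z}$-module with the explicit monomial basis $\Bh$ of Theorem~\ref{thm:Basis_for_R/J}, and $H^*(\mathfrak{H}(X,h);\mathbb{Z})$ is free and concentrated in even degrees because Tymoczko's paving by affines forces its cohomology to be torsion-free of the predicted rank. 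Consequently it suffices to produce a surjective graded ring homomorphism $\varphi\colon R/J_h\to H^*(\mathfrak{H}(X,h);\mathbb{Z})$: a surjection between finitely generated free $\mathbb{Z}$-modules of equal rank in each degree is automatically an isomorphism, since its rationalization is an isomorphism and so its kernel is torsion, hence zero. Thus the whole problem reduces to constructing $\varphi$ — equivalently, to two assertions: that a natural map kills $J_h$, and that it is onto.

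For the construction I would begin from the Borel presentation $H^*(\mathfrak{F};\mathbb{Z})=\mathbb{Z}[x_1,\dots,x_n]/\langle e_1,\dots,e_n\rangle$, where $x_i=c_1(V_i/V_{i-1})$ is the first Chern class of the $i^{\textrm{th}}$ tautological line bundle on the flag variety $\mathfrak{F}$, and pull back along the inclusion $\iota\colon\mathfrak{H}(X,h)\hookrightarrow\mathfrak{F}$. This at once gives a ring map $\mathbb{Z}[x_1,\dots,x_n]\to H^*(\mathfrak{H}(X,h))$ in which the $e_j$ already vanish, so $\varphi$ is well defined as soon as one checks that the generators $\tilde{e}_{\beta_i}(x_i,\dots,x_n)$ of $J_h$ restrict to $0$ on $\mathfrak{H}(X,h)$. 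Granting that, surjectivity of $\varphi$ amounts to surjectivity of $\iota^*\colon H^*(\mathfrak{F})\twoheadrightarrow H^*(\mathfrak{H}(X,h))$; for regular nilpotent $X$ I would obtain this either by observing that Tymoczko's affine cells of $\mathfrak{H}(X,h)$ sit compatibly inside the Schubert cells of $\mathfrak{F}$, so that $\iota^*$ is degreewise surjective, or — in the spirit of the Peterson-variety analysis of Harada--Tymoczko~\cite{HT} — from the natural $S^1$-action on $\mathfrak{H}(X,h)$, its isolated fixed flags, and a localization argument comparing $H^*_{S^1}(\mathfrak{F})$ with $H^*_{S^1}(\mathfrak{H}(X,h))$.

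The genuine obstacle is showing that $\tilde{e}_{\beta_i}(x_i,\dots,x_n)$ vanishes on $\mathfrak{H}(X,h)$ — the Hessenberg analogue of the relations cutting out the Tanisaki ideal. Here $\tilde{e}_r(x_i,\dots,x_n)$ is the degree-$r$ complete homogeneous symmetric polynomial in $x_i,\dots,x_n$, hence a Chern class of a symmetric power of the tautological quotient bundle $Q_{i-1}=\underline{\mathbb{C}}^n/V_{i-1}$. On $\mathfrak{H}(X,h)$ the operator $X$ induces bundle maps $V_j\to V_{h(j)}$ for every $j$, and the plan is to show that these force the relevant Chern class to vanish precisely because $\beta_i=i-\#\{k:h_k<i\}$ measures how much of $Q_{i-1}$ remains "uncontrolled" by $X$ subject to the Hessenberg condition. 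I expect the cleanest route to be fixed-point localization: since $\mathfrak{H}(X,h)$ has only finitely many $S^1$-fixed flags, it is enough to check that $\tilde{e}_{\beta_i}(x_i,\dots,x_n)$ restricts to zero at each of them, which becomes a weight computation whose bookkeeping should match the combinatorial identity $\nu_{j_0}=\beta_{i_0}$ established in the proof of Theorem~\ref{thm:number_of_JT-fillings_equals_product_of_betas}. An alternative, pursued in the author's forthcoming work~\cite{Mb2}, is to identify $J_h$ with a Hessenberg generalization of the Tanisaki ideal, for which the vanishing on $\mathfrak{H}(X,h)$ is geometric in the style of De Concini--Procesi. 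Either way, once $J_h\mapsto 0$ and surjectivity of $\iota^*$ are in hand, the rank count of the first paragraph completes the argument over $\mathbb{Z}$, yielding the integral cohomology ring.
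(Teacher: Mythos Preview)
The statement is a \emph{conjecture} in the paper, not a theorem; the paper offers no proof, only supporting evidence (the Betti-number match of Theorem~\ref{thm:algebraic_view_of_R/J_h} and agreement with the Harada--Tymoczko presentation for small Peterson cases). So there is no proof in the paper to compare your proposal against.

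Your outline is a sensible strategy and correctly isolates what the paper does establish: the additive agreement of $R/J_h$ with $H^*(\mathfrak{H}(X,h);\mathbb{Z})$ via the affine paving and the monomial basis $\Bh$. But your proposal is explicitly a plan, not a proof. You flag the vanishing of $\tilde{e}_{\beta_i}(x_i,\dots,x_n)$ under $\iota^*$ as ``the genuine obstacle'' and do not resolve it --- both suggested routes (fixed-point localization, identification with a generalized Tanisaki ideal) remain sketches. The surjectivity of $\iota^*$ is also handled only heuristically: that Tymoczko's affine cells sit inside Schubert cells does not by itself force surjectivity of the restriction map on cohomology, and an $S^1$-localization argument would require you to actually control the equivariant cohomology and its specialization to ordinary cohomology. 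You have thus reduced the conjecture to two precise claims that are plausible and well motivated, but have not established either; neither does the paper, which is precisely why the statement is recorded as a conjecture rather than a theorem.
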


The family of regular nilpotent Hessenberg varieties contains a subclass of varieties called Peterson varieties.  These are the $\mathfrak{H}(X,h)$ for which $X$ is a regular nilpotent operator (equivalently, $\mu$ has shape $(n)$) and the Hessenberg function is defined as $h(i)=i+1$ for $i<n$ and $h(n)=n$.  Harada and Tymoczko~\cite{HT} recently gave the first general computation of this cohomology ring in terms of generators and relations.  Their presentation is given via a Monk-type formula.  Although computable, the presentation is computationally heavy.  Computer software such as Macaulay 2 is needed to produce small examples and exhibit a basis (via Gr\"obner basis reduction).  For small $n$, we explored the relationship between their presentation and mine.  Thus far, the two are isomorphic as rings.  Besides its ease of computation, a further advantage of my conjectural presentation is that it generalizes to all regular nilpotent Hessenberg varieties.

\subsection{An elaborative example}\label{subsec:elab_example}
\begin{example}\label{app:elab_example}
Let $h=(3,3,3,4)$ be our Hessenberg function.  The degree tuple is $\beta = (1,3,2,1)$, yielding the ideal $J_h = \langle \tilde{e}_1(x_4),\tilde{e}_3(x_3,x_4),\tilde{e}_2(x_2,x_3,x_4),\tilde{e}_1(x_1,x_2,x_3,x_4) \rangle$.  More formally, we write
$$J_h = \left(
\begin{array}{c}
x_4, \\
x_3^3 + x_3^2 x_4 + x_3 x_4^2 + x_4^3, \\
x_2^2 + x_2 x_3 + x_2 x_4 + x_3^2 + x_3 x_4 + x_4^2, \\
x_1 + x_2 + x_3 + x_4
\end{array}
\right).$$
Since the generators of $J_h$ form a Gr\"obner basis, the leading term ideal $\LTJ$ equals the monomial ideal $\langle x_4, x_3^3, x_2^2, x_1 \rangle$.  Basic commutative algebra results yield that the quotient $R/J_h$ has basis $\{\X \; | \; \X \notin \LTJ \}$ giving the set
$$\left\lbrace x_1^{\alpha_1} x_2^{\alpha_2} x_3^{\alpha_3} x_4^{\alpha_4} \; \vline \;
\begin{array}{c}
\alpha_1=0,\;\; \alpha_2=0,1\\
\alpha_3=0,1,2,\;\; \alpha_4=0
\end{array}
\right\rbrace.$$

So the basis $\Bh$ is $\{1,\; x_2,\; x_3,\; x_2 x_3,\; x_3^2,\; x_2 x_3^2 \}$ and agrees with Level 5 of the $h$-tableau-tree of Figure~\ref{fig:tab_tree_example} as expected.  It has the predicted size $\prod_{i=1}^4 \beta_i = 1\cdot2\cdot3\cdot1 = 6$.  And similarly we should have exactly six possible $\JT$-fillings.  These are given by the rule: a horizontal adjacency  \setlength{\unitlength}{.15in}\begin{picture}(2,1)(0,0)
\linethickness{.25pt}
\multiput(0,0)(0,1){2}{\line(1,0){2}}
\multiput(0,0)(1,0){3}{\line(0,1){1}}
\put(.5,.5){\makebox(0,0){$k$}}
\put(1.5,.5){\makebox(0,0){$j$}}
\end{picture}
is allowed only if $k\leq h(j)$.  Thus descents of the form \begin{picture}(2,1)(0,0)
\linethickness{.25pt}
\multiput(0,0)(0,1){2}{\line(1,0){2}}
\multiput(0,0)(1,0){3}{\line(0,1){1}}
\put(.5,.4){\makebox(0,0){3}}
\put(1.5,.4){\makebox(0,0){2}}
\end{picture}\;,
\begin{picture}(2,1)(0,0)
\linethickness{.25pt}
\multiput(0,0)(0,1){2}{\line(1,0){2}}
\multiput(0,0)(1,0){3}{\line(0,1){1}}
\put(.5,.4){\makebox(0,0){3}}
\put(1.5,.4){\makebox(0,0){1}}
\end{picture}\;, and 
\begin{picture}(2,1)(0,0)
\linethickness{.25pt}
\multiput(0,0)(0,1){2}{\line(1,0){2}}
\multiput(0,0)(1,0){3}{\line(0,1){1}}
\put(.5,.4){\makebox(0,0){2}}
\put(1.5,.4){\makebox(0,0){1}}
\end{picture} are allowed.  Notice the rule allows all adjacent ascents.  Below we give these fillings, their corresponding dimension pairs, and their corresponding monomials given by the function $\Phi$ on each filling.

\setlength{\unitlength}{.17in}
$$\begin{array}{cccccc}
\begin{picture}(4,1)(0,0)
\linethickness{.25pt}
\multiput(0,0)(0,1){2}{\line(1,0){4}}
\multiput(0,0)(1,0){5}{\line(0,1){1}}
\put(.5,.5){\makebox(0,0){1}}
\put(1.5,.5){\makebox(0,0){2}}
\put(2.5,.5){\makebox(0,0){3}}
\put(3.5,.5){\makebox(0,0){4}}
\end{picture} \;\; & \;\; \begin{picture}(4,1)(0,0)
\linethickness{.25pt}
\multiput(0,0)(0,1){2}{\line(1,0){4}}
\multiput(0,0)(1,0){5}{\line(0,1){1}}
\put(.5,.5){\makebox(0,0){2}}
\put(1.5,.5){\makebox(0,0){1}}
\put(2.5,.5){\makebox(0,0){3}}
\put(3.5,.5){\makebox(0,0){4}}
\end{picture} \;\; & \;\; \begin{picture}(4,1)(0,0)
\linethickness{.25pt}
\multiput(0,0)(0,1){2}{\line(1,0){4}}
\multiput(0,0)(1,0){5}{\line(0,1){1}}
\put(.5,.5){\makebox(0,0){1}}
\put(1.5,.5){\makebox(0,0){3}}
\put(2.5,.5){\makebox(0,0){2}}
\put(3.5,.5){\makebox(0,0){4}}
\end{picture} \;\; & \;\; \begin{picture}(4,1)(0,0)
\linethickness{.25pt}
\multiput(0,0)(0,1){2}{\line(1,0){4}}
\multiput(0,0)(1,0){5}{\line(0,1){1}}
\put(.5,.5){\makebox(0,0){2}}
\put(1.5,.5){\makebox(0,0){3}}
\put(2.5,.5){\makebox(0,0){1}}
\put(3.5,.5){\makebox(0,0){4}}
\end{picture} \;\; & \;\; \begin{picture}(4,1)(0,0)
\linethickness{.25pt}
\multiput(0,0)(0,1){2}{\line(1,0){4}}
\multiput(0,0)(1,0){5}{\line(0,1){1}}
\put(.5,.5){\makebox(0,0){3}}
\put(1.5,.5){\makebox(0,0){1}}
\put(2.5,.5){\makebox(0,0){2}}
\put(3.5,.5){\makebox(0,0){4}}
\end{picture} \;\; & \;\; \begin{picture}(4,1)(0,0)
\linethickness{.25pt}
\multiput(0,0)(0,1){2}{\line(1,0){4}}
\multiput(0,0)(1,0){5}{\line(0,1){1}}
\put(.5,.5){\makebox(0,0){3}}
\put(1.5,.5){\makebox(0,0){2}}
\put(2.5,.5){\makebox(0,0){1}}
\put(3.5,.5){\makebox(0,0){4}}
\end{picture}\\
\updownarrow & \updownarrow & \updownarrow & \updownarrow & \updownarrow & \updownarrow\\
\mbox{no pairs} & (12) & (23) & (12),(13) & (13),(23) & (12),(13),(23)\\
\updownarrow & \updownarrow & \updownarrow & \updownarrow & \updownarrow & \updownarrow\\
1 & x_2 & x_3 & x_2 x_3 & x_3^2 & x_2 x_3^2
\end{array}$$

\noindent To our delight, we get the same monomials from the $\JT$-fillings as the basis of $R/J_h$.
\end{example}

\subsection{Forthcoming work}\label{subsec:forthcoming_work}
In current joint work~\cite{Mb2} with Tymoczko, we provide a generalization $I_h$ of the Tanisaki ideal from the Springer setting into the new setting of regular nilpotent Hessenberg varieties.  We prove that these ideals $I_h$ coincide with the ideals $J_h$ which in turn give Gr\"obner bases presentations for a family of generalized Tanisaki ideals.  The equality of these two families of ideals provides further support that the quotient rings $R/J_h$ have some intimate connection with the cohomology rings of regular nilpotent Hessenberg varieties.

\subsection{Two open questions}

\noindent\textbf{Question 1:}\\
We showed in Subsection~\ref{subsec:Psi_map} that we have an inverse map $\Psi$ if we fix $h=(1,2,\ldots,n)$ and let $\mu$ vary.  We showed in Subsection~\ref{subsec:Psi_h_map} that we have an inverse map $\Psi_h$ if we fix $\mu=(n)$ and let $h$ vary.  Is there an inverse map $\Psi_{h,\mu}$ which incorporates both the $h$-function and the shape $\mu$?

\bigskip
\noindent\textbf{Question 2:}\\
Is there a direct topological proof that our quotient ring $R/J_h$ is the cohomology ring of the regular nilpotent Hessenberg varieties?

\bibliographystyle{plain}	
\bibliography{myrefs}

\end{document}